\theoremstyle{plain}
\newtheorem{theorem}{Theorem}
\newtheorem{corollary}{Corollary}
\newtheorem{proposition}{Proposition}
\newtheorem{lemma}{Lemma}
\theoremstyle{definition}
\newtheorem{definition}{Definition}
\newtheorem{remark}{Remark}
\newtheorem*{theorem*}{Theorem}
\newtheorem*{ack}{Acknowledgements}
\begin{document}

\title[]
      {Horocycle flow orbits and lattice surface characterizations}
\author{Jon Chaika}
\address{Department of Mathematics\\
         University of Utah\\
         Salt Lake City, UT 84112 \\ 
         U.S.A. }
\email{chaika@math.utah.edu }

\author{Kathryn Lindsey}
\address{Department of Mathematics\\
         University of Chicago\\
         Chicago, IL 60637 \\ 
         U.S.A.}        
          \email{klindsey@math.uchicago.edu}
\date{\today}

\begin{abstract} 
The orbit closure of any translation surface under the horocycle flow in almost any direction equals its $SL_2(\mathbb{R})$ orbit closure.  This result gives rise to new characterizations of lattice surfaces in terms of the hororcycle flow.  
\end{abstract}
\maketitle

\section{Introduction}

A translation surface is a closed, 2-real-dimensional manifold $M$ together with a subset $\Sigma \subset M$ consisting of finitely many points such that the restriction to $M \setminus \Sigma$ of each transition map of the manifold is a translation.  The group $SL_2(\mathbb{R})$ acts on the collection of translation surfaces by affinely deforming the charts of a manifold; the horocycle flow is the action of the one parameter subgroup $H$ consisting of matrices of the form $\left( \begin{smallmatrix}  1 & t \\ 0 & 1 \end{smallmatrix} \right)$.  A lattice surface is a translation surface whose stabilizer in $SL(2,\mathbb{R})$ is a lattice (i.e. has finite co-volume in $SL_2(\mathbb{R})$). We denote the image under $A \subset SL_2(\mathbb{R})$ of a subset $X$ of translation surfaces in a given stratum by $A \cdot X$,  the closure in the stratum of this set by $\overline{A \cdot X}$, and the matrix $\left( \begin{smallmatrix}  \cos(\theta) & -\sin(\theta) \\ \sin(\theta) & cos(\theta) \end{smallmatrix} \right)$ by $r_{\theta}$.  The main results of this paper are Theorems \ref{t:kakFullMeasure} and \ref{t:TFAEHorocycleLattices}:

\begin{theorem}
\label{t:kakFullMeasure}
For any translation surface $M$
 $$\overline{H r_{\theta} \cdot M }= \overline{SL_2(\mathbb{R}) \cdot M}$$ 
 for (Lebesgue) almost every angle $\theta \in S^1$.  
\end{theorem}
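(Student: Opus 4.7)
The strategy combines two deep structural inputs: the Eskin--Mirzakhani--Mohammadi (EMM) classification of $SL_2(\mathbb{R})$-orbit closures on strata of translation surfaces, and the Chaika--Eskin theorem on Birkhoff genericity in almost every direction. Set $\mathcal{M} := \overline{SL_2(\mathbb{R}) \cdot M}$; by EMM, $\mathcal{M}$ is an affine invariant submanifold carrying a canonical $SL_2(\mathbb{R})$-invariant, $SL_2(\mathbb{R})$-ergodic probability measure $\nu$, which is also ergodic for $H$ by the Mautner phenomenon.

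\textbf{Step 1.} I would invoke the Chaika--Eskin theorem to produce a full-measure set $\Theta \subseteq S^1$ such that for every $\theta \in \Theta$ the surface $r_\theta M$ is Birkhoff generic for $\nu$ under the geodesic flow $g_t = \operatorname{diag}(e^{t/2},e^{-t/2})$; in particular its forward $g_t$-orbit is dense in $\mathcal{M}$. Fix such a $\theta \in \Theta$.

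\textbf{Step 2.} I would then prove $\overline{H \cdot r_\theta M} = \mathcal{M}$, of which only the inclusion $\supseteq$ is nontrivial. The main tool is the expansion identity $g_t h_s g_{-t} = h_{s e^{2t}}$, which exhibits $H$ as the strong unstable foliation of $g_t$. Given $p \in \mathcal{M}$ and $\epsilon > 0$, I would combine the density of $\{g_t r_\theta M\}_t$ (from Step~1) with the fact, already given by EMM, that $\overline{P \cdot r_\theta M} = \mathcal{M}$ (the $P$-orbit closure is an affine invariant submanifold containing $r_\theta M$, so it is all of $\mathcal{M}$); this reduces matters to trading an $A$-factor for an $H$-factor in such an approximation. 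The expansion identity, non-divergence of the horocycle flow in the style of Minsky--Weiss, and careful balancing of scales should convert $g_t$-recurrence of $r_\theta M$ to $p$ into $H$-recurrence of $r_\theta M$ to $p$.

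\textbf{Main obstacle.} The heart of the argument is Step~2: because $\mathcal{M}$ is not a homogeneous space, there is no Ratner theorem classifying $H$-orbit closures, and the $H$-orbit closure of a single point can be strictly smaller than its $SL_2(\mathbb{R})$-orbit closure (closed horocycles on lattice surfaces witness this). The Chaika--Eskin input is exactly what excludes the Lebesgue-null set of exceptional $\theta$; extracting $H$-orbit density from $g_t$-Birkhoff genericity requires careful control of the orbit along the stable, neutral, and transverse moduli directions at the scales set by the expansion $e^{2t}$ in the unstable ($H$) direction.
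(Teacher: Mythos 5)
There is a genuine gap, and it is exactly where you flag the ``heart of the argument.'' The ingredients you list for Step~2 do not, as stated, bridge from $g_t$-genericity of $r_\theta M$ to density of $H\cdot r_\theta M$. The expansion identity $g_t h_s g_{-t} = h_{se^{2t}}$ conjugates the horocycle through $g_t r_\theta M$ to the horocycle through $r_\theta M$ (they are the same orbit), so it produces no new points and does not in itself trade $g_t$-recurrence for $H$-recurrence; Minsky--Weiss non-divergence controls escape to infinity but does not make a horocycle orbit dense; and the fact that $\overline{P\cdot r_\theta M}=\mathcal{M}$ (true for \emph{every} $\theta$) still leaves the $A$-factor to be removed, which is the whole problem. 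In the homogeneous setting $SL_2(\mathbb{R})/\Gamma$ the implication ``$g_t$-generic $\Rightarrow$ $H$-orbit dense'' holds, but this uses Ratner's classification of $H$-orbit closures, which is unavailable here; the theorem you are proving is precisely a substitute for that classification, so you cannot lean on it.

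The concrete obstruction is visible in the Cartan decomposition $h_s = r_{\phi_1(s)} g_{t(s)} r_{\phi_2(s)}$. As $s\to\infty$ one has $t(s)\sim\log s$, $\phi_1(s)\to 0$, $\phi_2(s)\to -\pi/2$, with both rotation angles converging at rate $\approx 1/s \approx e^{-t(s)}$. Thus $h_s r_\theta M = r_{\phi_1(s)} g_{t(s)} r_{\phi_2(s)+\theta} M$ interrogates the geodesic flow applied to the \emph{moving} direction $\phi_2(s)+\theta$, not the fixed direction $\theta$ (or $\theta - \pi/2$). Birkhoff genericity of the single surface $r_\theta M$ (or even $r_{\theta-\pi/2} M$) does not control $g_{t(s)} r_{\phi_2(s)+\theta}M$, and one cannot simply pass to the limit angle: a perturbation of the angle by $e^{-t}$ is magnified by $g_t$ into a macroscopic horocycle displacement $h_{\pm e^{t}}$ (this is the content of the paper's Lemma~\ref{lem:track hor}), so $g_{t(s)} r_{\phi_2(s)+\theta}M$ and $g_{t(s)} r_{\theta - \pi/2}M$ are not close. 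To handle this moving target one is forced into an argument that works uniformly over an arc of angles and over a small family of rotations, which is what the paper does. The paper's mechanism is: (a) define the set $U_{L,\epsilon}$ of surfaces whose length-$L$ forward $H$-segment is already $\epsilon$-dense in $\mathcal{M}\cap\mathcal{K}_\epsilon$ and observe via Mautner that $\mu(U_{L,\epsilon})\to 1$; (b) use the EMM \emph{circle-average} theorem (Theorem~\ref{thm:circle hits}) together with a finite net of rotations (Proposition~\ref{prop:hit many sets}, the ``spreading time'') to place $g_t r_\theta M$ into $r_{-\psi_i} V$ for most $\theta$ simultaneously for all $\psi_i$ in the net; (c) feed this through the Cartan decomposition (Lemma~\ref{lem:hs good}) to conclude that for most $\theta$, $h_s r_\theta M \in U_{L,\epsilon}$ for some $s$; and (d) bootstrap, since $\bigcup_{\ell\in[0,L]}h_\ell h_s r_\theta M \subset H\cdot r_\theta M$. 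Your plan has no analogue of (a) and (d), which is the bootstrap that actually produces density, and step (b) requires a circle-average (simultaneous in $\theta$) rather than a pointwise Birkhoff statement precisely because of the moving angle. Replacing EMM Theorem~2.6 by Chaika--Eskin is a legitimate alternative input for the equidistribution, but it does not by itself resolve the conversion problem, which is where the real work lies.
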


\begin{theorem}
\label{t:TFAEHorocycleLattices}
The following are equivalent:
\begin{enumerate}[(i)]
\item \label{item1} $M$ is a lattice surface.
\item \label{item2} For every angle $\theta \in S^1$, every $H$-minimal subset of $\overline{Hr_{\theta} \cdot M}$ is a periodic $H$-orbit.  
\item \label{item2prime} There exists a Lebesgue measurable set $Z \subset S^1$ of positive Lebesgue measure such that $\theta \in Z$ implies every $H$-minimal subset of $\overline{Hr_{\theta} \cdot M}$ is a periodic $H$-orbit.  
\item \label{item3} Every $H$-minimal subset of $\overline{GL_2(\mathbb{R}) \cdot M}$ (or of $\overline{SL_2(\mathbb{R})\cdot M}$) is a periodic $H$-orbit.
\end{enumerate}
\end{theorem}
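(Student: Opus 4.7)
\emph{Proof plan.} I plan to close the cycle (i) $\Rightarrow$ (iv) $\Rightarrow$ (ii) $\Rightarrow$ (iii) $\Rightarrow$ (i). The first three implications are relatively soft; the key content is (iii) $\Rightarrow$ (i), where Theorem~\ref{t:kakFullMeasure} is essential.

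For (i) $\Rightarrow$ (iv), if $M$ is a lattice surface then $\overline{SL_2(\mathbb{R}) \cdot M} = SL_2(\mathbb{R}) \cdot M$ is a closed finite-volume $SL_2(\mathbb{R})$-orbit, identifiable with $SL_2(\mathbb{R})/\Gamma$ for the (lattice) Veech group $\Gamma$. On such a quotient the classical Dani--Furstenberg horocycle dichotomy gives that every $H$-orbit closure is either a periodic $H$-orbit (at a cusp of $\Gamma$) or the whole space; since the whole space strictly contains the periodic orbits it is not itself $H$-minimal, so the $H$-minimal subsets are precisely the periodic $H$-orbits (the $\overline{GL_2(\mathbb{R}) \cdot M}$ version follows after rescaling area). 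For (iv) $\Rightarrow$ (ii), the inclusion $\overline{Hr_\theta \cdot M} \subseteq \overline{SL_2(\mathbb{R}) \cdot M}$ ensures that an $H$-minimal subset $Y$ of the smaller set is closed and $H$-invariant in the larger set, and any proper closed $H$-invariant subset of $Y$ in the larger set would also be one in the smaller set; so $Y$ is $H$-minimal in $\overline{SL_2(\mathbb{R}) \cdot M}$ and therefore periodic by (iv). The implication (ii) $\Rightarrow$ (iii) is immediate upon taking $Z = S^1$.

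For (iii) $\Rightarrow$ (i), Theorem~\ref{t:kakFullMeasure} furnishes the Lebesgue-full set $Z' := \{\theta : \overline{Hr_\theta \cdot M} = \overline{SL_2(\mathbb{R}) \cdot M}\}$, so $Z \cap Z' \neq \emptyset$; picking $\theta_0 \in Z \cap Z'$ upgrades the hypothesis to the statement that every $H$-minimal subset of $\mathcal{M} := \overline{SL_2(\mathbb{R}) \cdot M}$ is a periodic $H$-orbit, and it remains to deduce that $M$ is a lattice surface. I would proceed contrapositively: if $M$ is not a lattice surface, then by the Eskin--Mirzakhani--Mohammadi theorem $\mathcal{M}$ is an affine invariant submanifold strictly larger than $SL_2(\mathbb{R}) \cdot M$, of real dimension greater than three, carrying a natural $SL_2(\mathbb{R})$-invariant probability measure $\mu_{\mathcal{M}}$ of full support with respect to which the $H$-action is ergodic. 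The main obstacle I anticipate is converting this measure-theoretic information into the topological statement that $\mathcal{M}$ contains a non-periodic $H$-minimal subset, which would contradict the strengthened hypothesis. I would attempt this by using Minsky--Weiss non-divergence to extract a compact $H$-invariant $\omega$-limit set from a $\mu_{\mathcal{M}}$-generic point (whose orbit is $\mathcal{M}$-dense and equidistributes, hence is not periodic), then Zorn's lemma inside the compact $\omega$-limit to produce an $H$-minimal subset $Y$, and finally exploit the fact that the union of all periodic $H$-orbits in $\mathcal{M}$ is a countable union of closed nowhere-dense subsets and is $\mu_{\mathcal{M}}$-null to rule out $Y$ being contained in a single periodic $H$-orbit.
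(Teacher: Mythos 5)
Your cycle $(\ref{item1}) \Rightarrow (\ref{item3}) \Rightarrow (\ref{item2}) \Rightarrow (\ref{item2prime}) \Rightarrow (\ref{item1})$ differs from the paper's cycle $(\ref{item1}) \Rightarrow (\ref{item2}) \Rightarrow (\ref{item2prime}) \Rightarrow (\ref{item3}) \Rightarrow (\ref{item1})$. Your three soft implications are fine. The route $(\ref{item1}) \Rightarrow (\ref{item3})$ via the Hedlund--Dani horocycle dichotomy on $SL_2(\mathbb{R})/\Gamma$ is different from the paper's $(\ref{item1}) \Rightarrow (\ref{item2})$, which instead combines the Smillie--Weiss classification of $H$-minimal sets (Proposition~\ref{prop:minimalSets}, Theorem~\ref{t:SmillieWeissMinimalCharacterization}) with the fact that a lattice surface is uniformly completely parabolic; both are valid. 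The first half of your $(\ref{item2prime}) \Rightarrow (\ref{item1})$ --- using Theorem~\ref{t:kakFullMeasure} to find $\theta_0$ with $\overline{Hr_{\theta_0}\cdot M}=\overline{SL_2(\mathbb{R})\cdot M}$ and thereby upgrade $(\ref{item2prime})$ to ``every $H$-minimal subset of $\overline{SL_2(\mathbb{R})\cdot M}$ is a periodic $H$-orbit'' --- is exactly the paper's argument for $(\ref{item2prime}) \Rightarrow (\ref{item3})$.

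The genuine gap is the remainder of your $(\ref{item2prime}) \Rightarrow (\ref{item1})$, i.e.\ the passage from ``every $H$-minimal set in $\mathcal{M}$ is periodic'' to ``$M$ is a lattice surface.'' Your contrapositive plan --- pick a $\mu_{\mathcal{M}}$-generic point $x$, use Minsky--Weiss non-divergence and Zorn to extract an $H$-minimal set $Y$ from the $\omega$-limit set, and then rule out $Y$ being periodic because the union of periodic $H$-orbits is $\mu_{\mathcal{M}}$-null and meager --- does not close. The $\omega$-limit set of the dense generic orbit is all of $\mathcal{M}$, so Zorn produces merely \emph{some} $H$-minimal set in $\mathcal{M}$, which may well be a periodic $H$-orbit. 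The smallness of the union of periodic orbits only tells you that $x$ itself is not on one; it puts no constraint on which minimal set Zorn hands you. Indeed, by Proposition~\ref{prop:minimalSets} every $H$-minimal set is a torus of dimension equal to the $\mathbb{Q}$-rank of the cylinder moduli of some horizontally periodic surface, and nothing in your argument prevents every horizontally periodic surface in $\mathcal{M}$ from being parabolic, in which case every minimal set is a circle and there is no contradiction.

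The paper closes this step by a chain of rigidity lemmas built on Wright's cylinder deformation theory (Lemma~\ref{lem:Wright} and Lemmas~\ref{lem:compparabolic}--\ref{l:dim2}): if some surface in $\mathcal{M}$ fails to be completely periodic, or is not parabolic in some periodic direction, or has more than one $\mathcal{M}$-equivalence class of cylinders in some periodic direction, then a cylinder shear or cylinder stretch produces a horizontally periodic surface in $\mathcal{M}$ with incommensurable moduli, whose $H$-orbit closure is a higher-dimensional torus, contradicting $(\ref{item3})$. These constraints force $\textrm{Twist}(N,\mathcal{M})=\textrm{Pres}(N,\mathcal{M})$ and ultimately $\dim_{\mathbb{C}} T^N\mathcal{M}=2$ for all $N\in\mathcal{M}$, so $\mathcal{M}$ is a closed $GL_2(\mathbb{R})$-orbit and $M$ is a lattice surface by Theorem~\ref{t:LatticeCharacterizations}. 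You would need to supply an argument of this sort; the soft measure/category dichotomy alone cannot produce the needed non-periodic minimal set.
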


Since $SL_2(\mathbb{R})$ orbit closures are affine invariant submanifolds (\cite{EMM}), Theorem \ref{t:kakFullMeasure} implies that horocycle orbit closures in almost every direction also have this ``nice" structure.  In particular, these horocycle orbit closures are immersed submanifolds defined by linear equations in period coordinates with real coefficients, and have an associated $SL_2(\mathbb{R})$ invariant probability measure that is ergodic with respect to $SL_2(\mathbb{R})$ and (via the Mauter phenomenon, described in Section \S \ref{ss:dynamicalPreliminaries}), ergodic with respect to $H$.  However, the conclusion $\overline{Hr_{\theta} \cdot M} = \overline{SL_2(\mathbb{R})\cdot M}$ may not hold on a measure zero set of angles.  To see this, $H$ preserves horizontal saddle connections, and so for any angle $\theta$ in which there is a saddle connection on $M$, we have $Hr_{-\theta} \cdot M$ is not dense in $\overline{SL_2(\mathbb{R}) \cdot M}$.

Smillie and Weiss found examples of horocycle orbit closures which: are manifolds with non-empty boundaries; have infinitely generated fundamental group; and at almost every point are described by non-linear equations \cite{SWOberwolfach}, \cite{SmillieWeissExamples}.   The examples found by Smillie and Weiss, thus, belong to the measure zero set of angles not governed by the conclusion of Theorem \ref{t:kakFullMeasure}. The question of finding a characterization of the angles $\theta$ for which  $\overline{Hr_{\theta} \cdot M} = \overline{SL_2(\mathbb{R}) \cdot M}$ is open.  Hooper and Weiss have found a sufficient condition: if $M$ is a periodic point for $ \left( \begin{smallmatrix} e^t & 0 \\ 0 & e^{-t} \end{smallmatrix} \right)$, then $\overline{H \cdot M} = \overline{SL_2(\mathbb{R}) \cdot M}$ \cite{HooperWeiss}.

A novel aspect of Theorem \ref{t:TFAEHorocycleLattices} is that each of conditions (\ref{item2})-(\ref{item3}) is \emph{sufficient} for the surface to  be a lattice surface (\ref{item1}); that each is necessary was either previously known or could be easily deduced from extant conditions.  Our proof uses Theorem \ref{t:kakFullMeasure} to show that (\ref{item2}) (or (\ref{item2prime})) implies (\ref{item3}).  In fact, the full strength of Theorem \ref{t:kakFullMeasure} is not required; we only use that there exists a positive measure set of angles $\theta$ for which the horocycle orbit closure equals the $SL_2(\mathbb{R})$ orbit closure.  While various characterizations of lattice surfaces are known, including a characterization in terms of the geodesic flow, Theorem \ref{t:TFAEHorocycleLattices} is, to the authors' knowledge, the first characterization of lattice surfaces in terms of the horocycle flow.  We reproduce below a list from \cite{Characterizations} of previously known characterizations of lattice surfaces (the reader may consult \cite{Characterizations} for definitions and notation used in the statement of the theorem):

\begin{theorem}[\cite{Characterizations}]
\label{t:LatticeCharacterizations}
 The following are equivalent:
\begin{itemize}
\item $M$ is a lattice surface.
\item $M$ is uniformly completely periodic. 
\item $M$ is uniformly completely parabolic. 
\item $M$ has ``no small triangles" \item $(M,\Sigma)$ has ``no small virtual triangles."
\item $|\mathcal{T}(M)|<\infty$.
\item The $SL_2(\mathbb{R})$-orbit of $M$ is closed. 
\item There is a compact subset $K$ of the stratum $\mathcal{H}$ containing $M$ such that for any $\alpha \in SL_2(\mathbb{R})$, the orbit of $\alpha \cdot M$ under the geodesic flow has nonempty intersection with $K$.  
\end{itemize}
\end{theorem}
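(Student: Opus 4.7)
The plan is to establish the eight conditions as equivalent via a ``wheel'' centered on the lattice surface condition, using the closed orbit equivalence of Smillie as the hub and invoking established results (Veech's dichotomy, Smillie--Weiss, Vorobets) for the remaining spokes. I will not attempt a cycle of implications, since several conditions live at different ``levels'' (dynamical, geometric, combinatorial); instead, for each condition I will describe how to prove its equivalence with the lattice surface condition directly.

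First I would handle the orbit-theoretic conditions. The equivalence of being a lattice surface with having a closed $SL_2(\mathbb{R})$-orbit is Smillie's theorem. The forward direction is a general fact about lattice orbits in homogeneous spaces (here exploited via the quotient $SL_2(\mathbb{R})/\mathrm{Stab}(M)$ embedding as the orbit). For the converse, a closed orbit of finite volume carries an invariant probability measure, and the stabilizer must then project to a lattice. The equivalence with the compact-recurrence condition for the geodesic flow (Smillie's compactness criterion) is proved by showing that if the geodesic orbit of some $\alpha\cdot M$ escapes every compact set, one extracts a sequence converging to a degenerate surface outside the orbit closure, contradicting closedness; conversely a closed orbit is of finite volume and hence recurrent under the Teichmüller flow.

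Next, for uniform complete periodicity and uniform complete parabolicity, the forward direction is Veech's dichotomy: on a lattice surface, every direction is either uniquely ergodic or completely periodic, and in the periodic case the cylinders have commensurable moduli uniformly bounded away from $0$ and $\infty$; the parabolic version follows because the horocycle generated by the cylinder decomposition lies in the Veech group. The converse is the deeper direction: uniform parabolicity yields many parabolic elements in the Veech group whose fixed directions are dense on $S^1$, from which a Poincaré polygon / ping-pong argument produces a lattice. For the geometric conditions (no small triangles, no small virtual triangles, $|\mathcal{T}(M)|<\infty$), Smillie--Weiss prove equivalence with the compact-recurrence condition: small triangles correspond, via the Teichmüller flow and rotation, to short saddle connections in generic directions, so absence of small triangles on $SL_2(\mathbb{R})\cdot M$ is equivalent to bounded excursion of the geodesic flow, and a Vorobets-style area-counting argument promotes this to finiteness of triangulation types.

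The main obstacle is the converse direction in the geometric and parabolic characterizations, i.e.\ showing that a bounded-geometry condition such as ``no small triangles'' actually forces closedness of the $SL_2(\mathbb{R})$-orbit rather than merely boundedness of its projection to moduli space. This requires the Smillie--Weiss construction that produces, from uniform lower bounds on triangle areas, a compact set in the stratum that the entire $SL_2(\mathbb{R})$-orbit avoids near its boundary, together with the fact that minimal non-closed orbits of $SL_2(\mathbb{R})$ in strata accumulate on their own boundary in controlled ways. The other subtle point is matching the precise notion of $\mathcal{T}(M)$ with triangulation combinatorics so that its finiteness is genuinely equivalent, rather than a priori weaker, to no small virtual triangles; this is handled by Vorobets' explicit bound relating triangle count to the number of $\mathrm{Aff}(M)$-equivalence classes.
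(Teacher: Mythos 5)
The paper does not prove this theorem: it is stated verbatim as a quotation from the reference \cite{Characterizations} (Smillie--Weiss, ``Characterizations of lattice surfaces''), and the surrounding text explicitly says ``we reproduce below a list from \cite{Characterizations} of previously known characterizations.'' So there is no in-paper proof for your sketch to be compared against; what you have written is an outline of an independent argument, following roughly the architecture of the cited source.

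Taken on its own terms, your sketch captures the right high-level map of the result, but the one place you treat as easy is actually where the real work lives. You describe the implication ``lattice $\Rightarrow$ closed $SL_2(\mathbb{R})$-orbit'' as ``a general fact about lattice orbits in homogeneous spaces.'' This is not a general fact: in a homogeneous space $G/\Gamma$, orbits of a Lie subgroup are not closed merely because the subgroup (or the isotropy) is a lattice, and here the relevant issue is whether the injective immersion $SL_2(\mathbb{R})/\mathrm{Veech}(M) \hookrightarrow \mathcal{H}_1$ is \emph{proper}. That properness is precisely the nontrivial content of Smillie's theorem and requires nondivergence estimates for saddle connections (Minkowski-type arguments), not formal homogeneous-space generalities. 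Your converse has the same issue: you assert that a closed orbit ``of finite volume'' carries an invariant probability measure and hence the stabilizer is a lattice, but a closed orbit a priori only carries a locally finite Haar measure; the finiteness of that measure is again a nondivergence statement about translation surfaces and is not automatic. So both directions of the closed-orbit $\Leftrightarrow$ lattice equivalence conceal the heart of the proof. The remaining spokes (Veech dichotomy for uniform complete periodicity/parabolicity, the Smillie--Weiss compactness criterion for no small triangles, Vorobets for finiteness of $\mathcal{T}(M)$) are reasonable pointers to the relevant literature, though for a complete argument you would need to spell out why uniformity in the periodic directions follows from finiteness of the set of cusps of the Veech group, and why the triangulation count is genuinely equivalent to (rather than a priori weaker than) the no-small-virtual-triangles condition.
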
  
\noindent A recent preprint by Lanneau, Nguyen and Wright (\cite{LanneauNguyenWright}) proves another characterization of lattice surfaces.  Namely, $M$ is a lattice surface if and only if $M$ is completely parabolic and $\overline{SL_2(\mathbb{R}) \cdot M}$ has rank 1.




\begin{ack}
The authors thank Barak Weiss for conjecturing the equivalences proven in Theorem \ref{t:TFAEHorocycleLattices}, and for sharing his insight and guidance.  We thank Alex Wright for helpful suggestions regarding an earlier draft.  This project began at ``Dynamics on parameter spaces 2013'' in Sde-Boker.  Jon Chaika received support from NSF grant 1300550.  Kathryn Lindsey received support from a NSF Mathematical Sciences Postdoctoral Research Fellowship.  
\end{ack}

\section{Background}

\subsection{Translation surfaces, strata, period coordinates}

A \emph{translation surface} is a $2$ real-dimensional manifold $M$ together with a subset $\Sigma \subset M$ such that the restriction to $M \setminus \Sigma$ of each transition map between charts of $M$ is a translation.   We will denote such a translation surface by the pair $(M,\Sigma)$ or, in cases where the set $\Sigma$ is clear from the context, simply by $M$.   We will consider only translation surfaces that are of \emph{finite type}  -- meaning that $M$ is a closed, connected surface of finite genus and $\Sigma$ is a finite set of distinct points of $M$ -- and the reader should interpret any reference to a translation surface as meaning a translation surface of finite type.  

The condition that all manifold transition maps of a translation surface be translations implies that the Euclidean metric on the restriction to $M\setminus \Sigma$ of each manifold chart is invariant under transition maps, resulting in a well-defined metric on $M \setminus \Sigma$.  Identifying the metric completion of this metric on $M \setminus \Sigma$ yields the \emph{canonical Euclidean metric} on $M$.  Since translations on $\mathbb{R}^2$ also preserve ``direction" (e.g. the ``positive vertical direction"), ``directions" are also well-defined on $M \setminus \Sigma$.  Consequently, for any direction $\theta \in S^1 = \mathbb{R} \mod{2\pi}$, any translation surface has a well-defined foliation $\mathcal{F}_{\theta}$ of $M$ in direction $\theta$, defined by pulling back the straight-line foliation in direction $\theta$ on the manifold charts.  One way of reformulating these two observations is to state that the tangent space $T(M\setminus \Sigma)$ has a canonical global trivialization, i.e. $T(M\setminus \Sigma)$ can be identified in a canonical way with $(M \setminus \Sigma) \times \mathbb{R}^2$.  Consequently, a translation surface has trivial linear holonomy, i.e. the group of linear maps on the tangent space $T_p(M\setminus \Sigma)$ induced by parallel transport of an element of $T_p(M \setminus \Sigma)$ along closed loops based at $p$ in $M \setminus \Sigma)$ is trivial.   The \emph{translation flow} in direction $\theta$ on a translation surface is the unit speed (with respect to the canonical Euclidean metric) flow along leaves of the foliation $\mathcal{F}_{\theta}$. 

Points in $\Sigma$ are called \emph{cone points}.  The \emph{cone angle} of a cone point is the total Euclidean angle in $S$ around that point.   The cone angle of a cone point in a translation surface is of the form $2\pi(1+ n)$ for some nonnegative integer $n$, which is said the \emph{order} of that cone point.  A \emph{saddle connection} is a finite-length leaf of the foliation in some direction which has a cone point both ends.  The Gauss-Bonnet Theorem implies that the sum of the orders of all cone points of a translation surface is $2g-2$, where $g$ is the genus of the underlying topological surface.  

An orientation-preserving homeomorphism $\psi:(M_1,\Sigma_1) \rightarrow (M_2,\Sigma_2)$ such that $\psi(\Sigma_1) = \Sigma_2$ and such that the restriction $\psi |_{M_1 \setminus \Sigma_1}$ is affine in  each chart is called an \emph{affine isomorphism}. We denote by $D(\psi)$ the linear part (in $GL(2,\mathbb{R})$) of an affine isomorphism $\psi$.   An affine isomorphism whose linear part is the identity is called a \emph{translation equivalence.}   We consider two translation surfaces to be equivalent if there exists a translation equivalence between them.  

A \emph{marked translation surface of finite type} is a triple $((Z,\Sigma^{\prime}),f,(M,\Sigma))$ consisting of 
a closed, connected topological surface $Z$ of finite genus with a set $\Sigma^{\prime}$ of distinct points of $Z$, a translation surface $(M,\Sigma)$ of finite type, and a homeomorphism $f:Z \rightarrow M$ with $f(\Sigma^{\prime}) = \Sigma$.  The collection of all marked translation surfaces admits a natural stratification based on the number and orders of the cone points of each surface.  As a set, the stratum $\widetilde{\mathcal{H}}(k_1,...,k_n)$ of the space of marked translation surfaces consists of the set of marked translation surfaces $((Z_g,\Sigma^{\prime}_n), f, (M,\Sigma))$ whose cone points have orders $k_1,\dots,k_n$.

Given a path $\gamma$ on a marked translation surface, define the \emph{period coordinate} of $\gamma$ to be the element of $\mathbb{C}$ given by  $ \int_{\gamma} dx + i \int_{\gamma} dy.$
 Period coordinates determine a map from a stratum $\widetilde{\mathcal{H}}(k_1,\dots,k_n)$ of marked translations surfaces  to $H^1(Z,\Sigma_n^{\prime}; \mathbb{C})$ as follows.  We may think of an element of $H^1(Z,\Sigma_n^{\prime}; \mathbb{C})$ as assigning an element of $\mathbb{C}$ to each homotopy (rel $\Sigma_n^{\prime}$) class of paths in $Z$. Given a marked translation surface $((Z_g,\Sigma_n^{\prime}),f,(M,\Sigma))$,  define the corresponding element  of $H^1(Z_g,\Sigma_n^{\prime}; \mathbb{C})$ to be the element that assigns to each relative homotopy class $[\gamma]$ of paths in $Z_g$ the period coordinate of the path $f\circ \gamma$ in $M$.  Thus, period coordinates define a map 
\begin{equation*}
\widetilde{\mathcal{H}}(k_1,\dots,k_n) \rightarrow H^1(Z_g,\Sigma_n^{\prime}; \mathbb{R}^2) \simeq \mathbb{C}^{2g+n - 1}.
\end{equation*} 
 This map defines a topology and local coordinates on $\widetilde{\mathcal{H}}(k_1,\dots,k_n)$, which with this topology, we refer to as a stratum of the \emph{space of marked translation surfaces}.

 The \emph{mapping class group} $Mod(Z,\Sigma^{\prime})$ of a closed topological surface $Z$ with a finite set $\Sigma^{\prime}$ of (distinct) marked points in $Z$ is the group of isotopy classes of homeomorphisms of $(Z,\Sigma^{\prime})$ (homeomorphisms of $Z$ that preserve $\Sigma^{\prime}$ as a set).  The group $Mod(Z,\Sigma^{\prime})$ acts properly discontinuously on each stratum of the space of marked surfaces $\widetilde{\mathcal{H}}(Z,\Sigma^{\prime})$ by precomposition with the marking map.  The \emph{stratum} $\mathcal{H}(k_1,...,k_n)$ of the moduli space of translation surfaces is the quotient space $\mathcal{H}(k_1,\dots,k_n):=\widetilde{\mathcal{H}}(k_1,\dots,k_n) / Mod(Z_g,\Sigma_n^{\prime})$. 

These strata can be disconnected, with up to three connected components, which were classified by Kontsevich-Zorich (\cite{KontsevichZorich}).  Pulling back the Lebesgue measure on $\mathbb{C}^{2g+n-1}$ via period coordinates yields a local volume form. It assigns infinite measure to a stratum of the space of translation surfaces. It is standard to restrict to unit area translation surfaces, a codimension 1 subset of this space.  A modified version of the disintegration of the previous measure onto this subspace gives it finite volume (\cite{Masur, Veech}). 
 This measure is called \emph{Masur-Veech measure}.  We will use $\mathcal{H}_1(k_1,\dots,k_n)$ to denote the stratum of unit area surfaces, in contrast to the full stratum (of surfaces of any area) $\mathcal{H}(k_1,\dots,k_n)$.
 
 \subsection{Dynamical preliminaries}
 \label{ss:dynamicalPreliminaries}
 
 The group $SL_2(\mathbb{R})$ acts on a stratum $\mathcal{H}_1$ as follows.   Given $A\in SL_2(\mathbb{R})$ and a translation surface $(M,\Sigma)$, the flat surface given by $A\cdot (M,\Sigma)$ is given by post-composing the charts of $M$ with $A$.  $GL_2(\mathbb{R})$ acts similarly on $\mathcal{H}$, but does not preserve the area of the translation surfaces.  The actions of two subgroups of $SL_2(\mathbb{R})$ are of particular interest.  The \emph{Teichm\"{u}ller flow} or \emph{geodesic flow} is the action of the one-parameter subgroup consisting of all matrices of the form 
\[ g_t :=  \left( \begin{array}{cc}
e^{t/2} & 0  \\
0 & e^{-t/2} \\  \end{array} \right), \hspace{1cm} t \in \mathbb{R} . \]
The \emph{horocycle flow} is the action of the one-parameter subgroup consisting of all matrices of the form 
\[ h_t :=  \left( \begin{array}{cc}
1 & t  \\
0 & 1 \\  \end{array} \right), \hspace{1cm} t \in \mathbb{R} . \]

A translation surface $(M,\Sigma)$ is said to be
\begin{itemize}
\item \emph{periodic} in direction $\theta$ if $(M,\Sigma)$ admits a cylinder decomposition in direction $\theta$,
\item \emph{completely periodic} if $(M,\Sigma)$ is periodic in every direction in which $(S,\Sigma)$ has at least one cylinder,
\item \emph{uniformly completely periodic} if $(M,\Sigma)$ is completely periodic and there exists $c >0$ such that for any direction $\theta$ for which $(M,\Sigma)$ is periodic, the ratio of lengths of any two saddle connections in direction $\theta$ is at most $c$.  
\item \emph{parabolic} in direction $\theta$ if $(M,\Sigma)$ is periodic in direction $\theta$ and the moduli of all the cylinders in direction $\theta$ are commensurable.
\item \emph{completely parabolic} if $M$ is completely periodic and parabolic in every periodic direction. 
\item \emph{uniformly completely parabolic} if $(M,\Sigma)$ is uniformly completely periodic and $(M,\Sigma)$ is parabolic in every periodic direction. 
\end{itemize}

  Let $(M,\Sigma)$ be a translation surface of finite type.  The \emph{Veech group} of $(M,\Sigma)$ is the stabilizer in $S$ of $(M,\Sigma)$.   A translation surface $(S,\Sigma)$ is a \emph{lattice surface} if its Veech group is a lattice, i.e. has finite co-volume in $SL_2(\mathbb{R})$.

\begin{theorem*}[The Veech Dichotomy]
 If $(M,\Sigma)$ is a lattice surface, then for every direction $\theta \in S^1$, precisely one of the following is true:
\begin{enumerate}
\item $(M,\Sigma)$ admits a cylinder decomposition in direction $\theta$, or
\item the translation flow in direction $\theta$ on $(M,\Sigma)$ is uniquely ergodic. 
\end{enumerate}
\end{theorem*}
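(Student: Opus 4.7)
The plan is to prove the dichotomy by first noting that the two alternatives are mutually exclusive (a cylinder decomposition in direction $\theta$ yields infinitely many closed leaves, each supporting a distinct transverse measure, which prevents unique ergodicity of the translation flow in direction $\theta$). Hence it suffices to assume $(M,\Sigma)$ does not admit a cylinder decomposition in direction $\theta$ and show the translation flow in direction $\theta$ is uniquely ergodic.

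After applying $r_{-\theta}$ we may assume $\theta$ is the vertical direction, and the translation flow in direction $\theta$ on $M$ becomes the vertical flow on $M' := r_{-\theta} \cdot M$. The key tool is Masur's criterion for unique ergodicity: if the forward orbit $\{g_t \cdot M' : t \geq 0\}$ is recurrent to some compact subset of $\mathcal{H}_1$, then the vertical flow on $M'$ is uniquely ergodic. So the goal reduces to proving recurrence of this Teichm\"uller geodesic orbit.

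To exploit the lattice hypothesis, observe that $SL_2(\mathbb{R}) \cdot M$ is closed in $\mathcal{H}_1$ and identifies with the finite-volume homogeneous space $SL_2(\mathbb{R})/\Gamma$, where $\Gamma$ is the (lattice) Veech group. A classical Mumford--Mahler compactness argument on such finite-volume homogeneous spaces shows that a forward geodesic orbit $\{g_t \cdot x\}$ escapes every compact set if and only if $x$ lies above a direction fixed by a parabolic element of $\Gamma$. Applying this to $x = M'$: if $\theta$ is not the fixed direction of any parabolic in $\Gamma$, the orbit recurs and Masur's criterion yields unique ergodicity, finishing the proof.

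The main obstacle is the remaining case, where some parabolic $P \in \Gamma$ fixes $\theta$. Then $P$ is realized by an affine automorphism $\phi$ of $M$ whose linear part is parabolic with fixed direction $\theta$, and I must show this forces a cylinder decomposition in direction $\theta$ (which, contradicting our assumption, would close the argument). The idea is that $\phi$ preserves $\mathcal{F}_\theta$ setwise, fixes each leaf at the level of foliations, and shears transversally: iterates of $\phi$ applied to a short transverse arc stretch it linearly in the $\theta$ direction. If some regular leaf of $\mathcal{F}_\theta$ were non-closed and recurrent, the induced dynamics on the transverse arc would contradict compactness of $M$; so every regular leaf of $\mathcal{F}_\theta$ must be periodic, yielding the cylinder decomposition (with commensurable moduli, as dictated by the single shear $\phi$). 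This step, combining an affine-structure argument with the geometry of the parabolic derivative, is the technical heart of the proof.
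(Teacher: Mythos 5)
The paper does not prove the Veech Dichotomy; it is stated in \S\ref{ss:dynamicalPreliminaries} purely as background (a classical theorem of Veech) and used later in Lemma \ref{l:first3}. So there is no paper proof to compare against. That said, your outline is the standard modern proof, essentially Veech's original strategy recast with Masur's criterion: reduce to showing recurrence of the Teichm\"uller geodesic via Masur's criterion, use the finite-volume identification $SL_2(\mathbb{R})\cdot M \cong SL_2(\mathbb{R})/\Gamma$ coming from the lattice hypothesis, observe that divergence of a geodesic ray in a finite-volume quotient occurs exactly when its endpoint is a parabolic fixed point of $\Gamma$, and dispose of that remaining case by showing a parabolic affine automorphism fixing direction $\theta$ forces complete periodicity in direction $\theta$. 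All of these pieces are correct and in the right order.

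The place your sketch is thinnest is exactly the step you flag as the ``technical heart.'' The assertion that a non-closed recurrent leaf ``would contradict compactness of $M$'' needs to be replaced by a real argument, and a few routine things must be made precise: one typically passes from $P$ to a power that preserves each separatrix and each minimal component of $\mathcal{F}_\theta$ individually (there are finitely many of each), and then, after normalizing $\theta$ to the horizontal and $DP$ to $h_s$, uses the fact that $h_s$ preserves horizontal lengths while shearing heights to derive a contradiction with the existence of a minimal component (e.g.\ by comparing the first-return IET to a horizontal transversal before and after applying $P^k$, or by an area/measure argument on the minimal component). This is exactly the lemma of Veech that a parabolic direction is completely periodic with commensurable moduli; it is correct, but it does require more than the one-sentence heuristic you give. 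Also, strictly speaking, Masur's criterion asks that the forward orbit \emph{not diverge}, which on a finite-volume quotient is equivalent to recurrence to some fixed compact set; you implicitly use this equivalence and it is worth stating. With those points filled in, the proposal is a complete and correct proof of the dichotomy.
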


\begin{theorem*}[The Mautner phenomenon for $SL_2(\mathbb{R})$] 
Let $\mathfrak{H}$ be a Hilbert space and let $\phi:SL_2(\mathbb{R}) \rightarrow U(\mathfrak{H})$ be a continuous unitary representation on $\mathfrak{H}$.  Then any element $v \in \mathfrak{H}$ that is invariant under $H$ is also invariant under $SL_2(\mathbb{R})$.  
\end{theorem*}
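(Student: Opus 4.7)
The plan is to exploit the contracting commutation relation
\[ g_t h_s g_{-t} = h_{s e^t} \]
between the geodesic subgroup $\{g_t\}$ and $H$, together with the analogous relation $g_{-t} u^-_r g_t = u^-_{re^t}$ for the opposite unipotent one-parameter subgroup $U^- = \{u^-_r = \left(\begin{smallmatrix} 1 & 0 \\ r & 1 \end{smallmatrix}\right) : r \in \mathbb{R}\}$. Since $H$ and $U^-$ topologically generate $SL_2(\mathbb{R})$, it suffices to upgrade the given $H$-invariance of $v$ to $U^-$-invariance.

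First I would observe that the closed subspace $\mathfrak{H}^H$ of $H$-invariant vectors is preserved by each $\phi(g_t)$:
\[ \phi(h_s) \phi(g_t) v = \phi(g_t) \phi(g_{-t} h_s g_t) v = \phi(g_t) \phi(h_{s e^{-t}}) v = \phi(g_t) v. \]
Next, by Banach--Alaoglu applied to the norm-bounded family $\{\phi(g_t) v\}_{t \in \mathbb{R}}$, some sequence $t_n \to -\infty$ yields a weak limit $v_\infty \in \mathfrak{H}^H$. The core calculation, which shows $v_\infty$ is $U^-$-invariant (and hence $SL_2(\mathbb{R})$-invariant), is
\[ \|\phi(u^-_r)\phi(g_{t_n})v - \phi(g_{t_n})v\| = \|\phi(g_{t_n})[\phi(u^-_{re^{t_n}})v - v]\| = \|\phi(u^-_{re^{t_n}})v - v\|, \]
which tends to $0$ as $t_n \to -\infty$ by strong continuity of $\phi$ at the identity (since $re^{t_n} \to 0$). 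Taking weak limits on both sides gives $\phi(u^-_r) v_\infty = v_\infty$ for every $r$.

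The main obstacle is then to conclude $v = v_\infty$. I would first identify $v_\infty$ with the orthogonal projection of $v$ onto $\mathfrak{H}^{SL_2(\mathbb{R})}$: for any $SL_2(\mathbb{R})$-invariant $w$,
\[ \langle v_\infty, w \rangle = \lim_n \langle \phi(g_{t_n})v, w \rangle = \lim_n \langle v, \phi(g_{-t_n})w \rangle = \langle v, w \rangle, \]
so $v - v_\infty$ is $H$-invariant and orthogonal to every $SL_2(\mathbb{R})$-invariant vector. To conclude $v - v_\infty = 0$, I would invoke the direct integral decomposition of $\phi$ into irreducible unitary representations of $SL_2(\mathbb{R})$ and reduce to the fact that every non-trivial irreducible unitary representation of $SL_2(\mathbb{R})$ admits no non-zero $H$-invariant vector. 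This last fact is checkable directly from the Bargmann classification (principal, complementary, and discrete series), which finishes the argument.
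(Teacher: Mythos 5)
The paper records the Mautner phenomenon as a known background result and supplies no proof, so there is no internal argument to compare against; I will assess your proposal on its own terms.

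Your weak-compactness steps are individually sound: $\mathfrak{H}^H$ is a closed (hence weakly closed) subspace preserved by each $\phi(g_t)$; a weakly convergent subsequence $\phi(g_{t_n})v$ with limit $v_\infty$ exists; the contraction identity plus strong continuity of $\phi$ at the identity forces $\phi(u^-_r)v_\infty = v_\infty$ for all $r$; and testing against $SL_2(\mathbb{R})$-invariant vectors identifies $v_\infty$ as the orthogonal projection of $v$ onto $\mathfrak{H}^{SL_2(\mathbb{R})}$. The difficulty is that this machinery leaves you exactly where you started. To conclude $v = v_\infty$ you must show that an $H$-invariant vector orthogonal to $\mathfrak{H}^{SL_2(\mathbb{R})}$ vanishes, i.e.\ that $\mathfrak{H}^H \cap (\mathfrak{H}^{SL_2(\mathbb{R})})^{\perp} = \{0\}$; since $\mathfrak{H}^{SL_2(\mathbb{R})} \subset \mathfrak{H}^H$, that is \emph{literally} the statement $\mathfrak{H}^H = \mathfrak{H}^{SL_2(\mathbb{R})}$ that you set out to prove. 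The construction of $v_\infty$ reduces the Mautner phenomenon to itself. You then discharge it by invoking the direct integral decomposition and the Bargmann classification. That route is legitimate (for separable $\mathfrak{H}$, which is all the paper needs) and does prove the theorem, but from that starting point the weak-limit detour is redundant: decompose $\phi$ into irreducibles, note the $H$-invariant component in each non-trivial summand vanishes, and you are done. Either the detour or the classification is superfluous, and the classification is a disproportionately heavy tool for this statement.

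The missing ingredient, which makes the argument elementary and self-contained, is a direct proof of $A$-invariance of $v$ before running your contraction argument. Let $F(g) = \langle \phi(g)v, v\rangle$, a continuous function on $SL_2(\mathbb{R})$. Since $\phi(h_s)v = v$ and $\phi(h_s)^* v = \phi(h_{-s})v = v$, $F$ is bi-$N$-invariant: $F(h_s\, g\, h_{s'}) = F(g)$ for all $s, s'$. Writing $g_t = \left( \begin{smallmatrix} e^t & 0 \\ 0 & e^{-t} \end{smallmatrix} \right)$ and $\hat h_\epsilon = \left( \begin{smallmatrix} 1 & 0 \\ \epsilon & 1 \end{smallmatrix} \right)$, a direct matrix computation gives
\[ h_{(e^t-1)/\epsilon}\, \hat h_\epsilon\, h_{(e^{-t}-1)/\epsilon} = \begin{pmatrix} e^t & 0 \\ \epsilon & e^{-t} \end{pmatrix} \longrightarrow g_t \quad (\epsilon \to 0). \]
By bi-$N$-invariance and continuity of $F$, $F(g_t) = \lim_{\epsilon\to 0} F(\hat h_\epsilon) = F(\mathrm{Id}) = \|v\|^2$. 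Since $\|\phi(g_t)v\| = \|v\|$, equality in Cauchy--Schwarz forces $\phi(g_t)v = v$, so $v$ is $A$-invariant. Your contraction computation now applies directly to $v$ rather than to a weak limit: using $\phi(g_{-t})v = v$ and $g_t \hat h_r g_{-t} = \hat h_{re^{-2t}}$,
\[ \|\phi(\hat h_r)v - v\| = \|\phi(\hat h_r)\phi(g_{-t})v - \phi(g_{-t})v\| = \|\phi(g_{-t})\phi(\hat h_{re^{-2t}})v - \phi(g_{-t})v\| = \|\phi(\hat h_{re^{-2t}})v - v\| \to 0 \]
as $t \to +\infty$. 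Hence $v$ is $\bar N$-invariant, and since $N$ and $\bar N$ generate $SL_2(\mathbb{R})$, $v$ is $SL_2(\mathbb{R})$-invariant. This avoids weak limits, irreducibility, direct integrals, Bargmann, and any separability hypothesis on $\mathfrak{H}$.
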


\noindent (The statement ``$\phi:S \rightarrow U(\mathfrak{H})$ is a continuous unitary representation on $\mathfrak{H}$" means $\phi$ is a homomorphism into the group of unitary automorphisms $\mathbb{U}(\mathfrak{H})$ of $\mathfrak{H}$ such that for every $v \in \mathfrak{H}$, the element $\phi(g)(v) \in \mathfrak{H}$ depends continuously on $g \in SL_2(\mathbb{R})$.)  

The action of $SL_2(\mathbb{R})$ on a stratum $\mathcal{H}_1$ with an $SL_2(\mathbb{R})$-invariant probability measure $\mu_1$ determines a continuous unitary representation on $\mathfrak{H}=L^2(\mathcal{H}_1,\mu_1)$ defined by $$\alpha \in SL_2(\mathbb{R}) \mapsto (f\in \mathfrak{H} \mapsto f \circ \alpha \in \mathfrak{H}) \in U(\mathfrak{H}).$$  For flows, ergodicity can be characterized as the condition that ``the only invariant elements of $L^2(\mathcal{H}_1,\mu_1)$ are constant functions." Consequently, an $SL_2(\mathbb{R})$-invariant measure on $\mathcal{H}_1$ is ergodic with respect to the action of $H$ if and only if it is ergodic for the $SL_2(\mathbb{R})$ action.


\subsection{Decompositions of $SL(2,\mathbb{R})$}

$$K=   \left\{ r_{\theta} = \begin{pmatrix}
\cos{\theta} & -\sin{\theta} \\ 
\sin{\theta} & \cos{\theta} \\
\end{pmatrix} : \theta \in S^1 \right \},
 \hspace{.5cm} 
A=\left\{ g_t = \begin{pmatrix}
e^t & 0 \\
0 & e^{-t} \\
\end{pmatrix} : t \in \mathbb{R} \right \}$$
$$
N= \left\{h_s = \begin{pmatrix}
1 & s\\ 
0& 1 \\
\end{pmatrix}: s \in \mathbb{R} \right \}, 
\hspace{.5cm}
\overline{N}= \left\{ \hat{h}_s = \begin{pmatrix}
1 & 0\\ 
s& 1 \\
\end{pmatrix} : s \in \mathbb{R} \right \}
$$
\medskip

There are at least three well-known decompositions of $SL_2(\mathbb{R})$:

\medskip
\noindent {\bf Iwasawa decomposition.} Every element $g \in SL_2(\mathbb{R})$ can be written as $kan$ for some $k \in K$, $a \in A$ and $n \in N$, and this representation is unique if we require $t \geq 0$ in $a=g_t$.  

\medskip
\noindent {\bf Cartan decomposition.}
Every element $g \in SL_2(\mathbb{R})$ can be written as $kak^{\prime}$, for some $k,k^{\prime} \in K$ and $a \in A$.

\medskip
\noindent {\bf Bruhat decomposition.}
$SL_2(\mathbb{R}) = \overline{N} A N \cup \iota AN$, where $\iota = \bigl(\begin{smallmatrix}
0 &-1\\ 1&0
\end{smallmatrix} \bigr)$, with explicit formulas 

$$\begin{pmatrix} a & b \\ c & d \\ \end{pmatrix} = \begin{pmatrix} 1 & 0 \\ c/a & 1 \\ \end{pmatrix} \begin{pmatrix} a & 0 \\ 0 & 1/a \\ \end{pmatrix} \begin{pmatrix} 1 & b/a \\ 0 & 1 \end{pmatrix}, \hspace{.5cm} \textrm{ if } a \not = 0$$

$$\begin{pmatrix} a & b \\ c & d \\ \end{pmatrix} = \begin{pmatrix} 0 & -1 \\ 1 & 0 \\ \end{pmatrix} \begin{pmatrix} c & 0 \\ 0 & 1/c \end{pmatrix} \begin{pmatrix} 1 & d/c \\ 0 & 1 \end{pmatrix}, \hspace{.5cm} \textrm{ if }a = 0.
$$

\medskip

 \begin{lemma}\label{lem:decomp KAK} In the Cartan decomposition $h_s=kak^{\prime}$, the element ``$a$" leaves every compact set as $s \rightarrow \pm \infty$.
 \end{lemma}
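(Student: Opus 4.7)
The plan is to exploit the fact that in any decomposition $g = kak'$ with $k,k' \in K$ and $a = g_t = \mathrm{diag}(e^t,e^{-t}) \in A$, the two rotations $k,k'$ preserve the Euclidean operator norm. Consequently $\|g\|_{\mathrm{op}} = \|a\|_{\mathrm{op}} = e^{|t|}$, so ``$a$ leaves every compact set'' is equivalent to $\|h_s\|_{\mathrm{op}} \to \infty$. Thus the whole lemma reduces to a one-line estimate on the operator norm of $h_s$.

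To finish, I would simply evaluate $h_s$ on a convenient vector: $h_s \bigl(\begin{smallmatrix} 0 \\ 1 \end{smallmatrix}\bigr) = \bigl(\begin{smallmatrix} s \\ 1 \end{smallmatrix}\bigr)$, which has norm $\sqrt{1+s^2}$, giving $\|h_s\|_{\mathrm{op}} \geq \sqrt{1+s^2}$. Combined with the identity $\|h_s\|_{\mathrm{op}} = e^{|t|}$ from the previous paragraph, this yields $|t| \geq \tfrac{1}{2}\log(1+s^2) \to \infty$ as $s \to \pm\infty$. Since compact subsets of $A$ are precisely those on which $|t|$ is bounded, the $a$ factor must eventually exit any prescribed compact set. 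There is no substantive obstacle here; the only thing to be careful about is the mild ambiguity in the Cartan decomposition (the sign of $t$ and the nonuniqueness when $t=0$), but this is harmless because the argument controls $|t|$, which is the relevant quantity for the conclusion.
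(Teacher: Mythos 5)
Your proof is correct and implements precisely the idea the paper gestures at: the paper offers no details, stating only that the lemma ``is an immediate consequence of the compactness of $K$,'' and your operator-norm estimate ($\|h_s\|_{\mathrm{op}} = \|a\|_{\mathrm{op}}$ since rotations are isometries, together with $\|h_s\|_{\mathrm{op}} \geq \sqrt{1+s^2}$) is a clean and explicit way of making that observation quantitative.
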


 Lemma \ref{lem:decomp KAK} is an immediate consequence of the compactness of $K$.

 \begin{lemma}\label{lem:decomp NAN} The $\bar{N}AN$ decomposition of $r_{\theta}$ is 
 $\hat{h}_{-\tan(\theta)}g_{\log(\cos(\theta))}h_{\tan(\theta)}.$
 \end{lemma}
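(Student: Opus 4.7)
The plan is to verify the lemma by direct matrix multiplication, using the explicit Bruhat decomposition formula already displayed in the paper. Since the statement to be proved is an equality of three explicit $2\times 2$ matrices, no conceptual work is required beyond substitution.

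First I would substitute the entries of $r_\theta = \begin{pmatrix} \cos\theta & -\sin\theta \\ \sin\theta & \cos\theta \end{pmatrix}$ into the formula
\[
\begin{pmatrix} a & b \\ c & d \end{pmatrix} = \begin{pmatrix} 1 & 0 \\ c/a & 1 \end{pmatrix}\begin{pmatrix} a & 0 \\ 0 & 1/a \end{pmatrix}\begin{pmatrix} 1 & b/a \\ 0 & 1 \end{pmatrix},
\]
which is valid as long as $a = \cos\theta \neq 0$. Reading off $c/a$ and $b/a$ identifies the $\bar N$ and $N$ factors, while the middle matrix $\operatorname{diag}(\cos\theta,\sec\theta)$ coincides with $g_{\log\cos\theta}$ under the paper's convention $g_t = \operatorname{diag}(e^t,e^{-t})$. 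The remaining step is a straightforward check that the resulting three matrices multiply to $r_\theta$ (for instance, $-\tan\theta\cdot\sin\theta + \sec\theta = \sec\theta(1-\sin^2\theta) = \cos\theta$ produces the correct bottom-right entry, and the cross terms give the off-diagonal sines).

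There is no real obstacle here, but two minor bookkeeping points deserve care. First, one must track signs consistently between the definitions of $r_\theta$, $h_s$, and $\hat h_s$ given in the paper; the verification reduces to a single $2\times 2$ matrix product but any sign error will cause the identity to fail. Second, the decomposition requires $\cos\theta\neq 0$, i.e.\ $\theta\neq\pm\pi/2$; at those values $r_\theta$ lies in the second stratum $\iota AN$ of the Bruhat decomposition, and the logarithm $\log\cos\theta$ in the statement is already undefined, so the hypothesis is implicit in the lemma. Once the multiplication is carried out, the lemma follows immediately.
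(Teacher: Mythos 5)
Your method (substitute the entries of $r_\theta$ into the paper's explicit Bruhat formula and read off the factors) is the right one and is essentially what one must do; there is no hidden content in this lemma beyond a $2\times 2$ computation. However, you did not actually finish the computation, and the partial check you did perform happens to hide a sign error.

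Plugging $a=\cos\theta$, $b=-\sin\theta$, $c=\sin\theta$, $d=\cos\theta$ into the displayed formula gives $c/a=+\tan\theta$ and $b/a=-\tan\theta$, so the Bruhat factorization of $r_\theta$ is
\[
r_\theta=\hat h_{\tan\theta}\,g_{\log\cos\theta}\,h_{-\tan\theta},
\]
whose signs are \emph{opposite} to those in the lemma's statement. You write that ``reading off $c/a$ and $b/a$ identifies'' the unipotent factors, but if you actually compare those values to $\hat h_{-\tan\theta}$ and $h_{\tan\theta}$ you should have flagged the discrepancy. Your verification then checks only the bottom-right entry, $-\tan\theta\sin\theta+\sec\theta=\cos\theta$; this entry is $\cos\theta$ in both $r_\theta$ and $r_{-\theta}$, so it cannot distinguish the two. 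If you multiply out all four entries of $\hat h_{-\tan\theta}\,g_{\log\cos\theta}\,h_{\tan\theta}$ you get
\[
\begin{pmatrix}\cos\theta & \sin\theta\\ -\sin\theta & \cos\theta\end{pmatrix}=r_{-\theta},
\]
not $r_\theta$. So the claim ``the cross terms give the off-diagonal sines'' is exactly where the proof needs to be careful and is where it fails: the off-diagonal entries come out with the wrong signs. (The paper's statement of this lemma appears to contain a sign typo; note that the later identity in the proof of Lemma~\ref{lem:track hor}, $g_tr_\theta=\hat h_{e^{-2t}\tan\theta}\,g_{\log\cos\theta}\,h_{-e^{2t}\tan\theta}\,g_t$, is consistent with the corrected decomposition above, not with the lemma as printed.) The fix is straightforward: either prove the corrected statement $r_\theta=\hat h_{\tan\theta}\,g_{\log\cos\theta}\,h_{-\tan\theta}$, or observe that the lemma's formula is the decomposition of $r_{-\theta}$. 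Your closing remark about the restriction $\cos\theta\neq 0$ is correct and worth keeping.
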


\subsection{Key recent results}

\subsubsection{Eskin-Mirzakhani-Mohammadi}
Our results build off of several breakthrough results of Eskin, Mirzakhani and Mohammedi.  

\begin{theorem}
\label{t:affineInvariantSubmfld}(Theorem 2.1 in \cite{EMM})
For any flat surface $M \in \mathcal{H}_1$, $\overline{SL_2(\mathbb{R}) \cdot M}$ is an affine invariant submanifold of $\mathcal{H}_1$
\end{theorem}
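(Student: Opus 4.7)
The statement is quoted verbatim as Theorem 2.1 of Eskin–Mirzakhani–Mohammadi, and a bona fide proof is the content of their long paper; I'll sketch the high-level strategy one would follow. The plan has two layers. First, establish measure rigidity for the upper-triangular subgroup $P = AN \subset SL_2(\mathbb{R})$: every $P$-invariant, $P$-ergodic Borel probability measure $\nu$ on $\mathcal{H}_1$ is \emph{affine}, meaning $\nu$ is in fact $SL_2(\mathbb{R})$-invariant, its support is a suborbifold cut out locally by $\mathbb{R}$-linear equations in period coordinates, and in those coordinates $\nu$ is a constant multiple of Lebesgue on that suborbifold. Second, use this classification together with quantitative non-divergence to show that any orbit $SL_2(\mathbb{R})\cdot M$ equidistributes to such an affine measure, which then forces $\overline{SL_2(\mathbb{R})\cdot M}$ to equal its support.

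The main obstacle is the measure classification, which dwarfs everything else. The argument is a non-homogeneous analogue of Ratner's theorems, but the familiar polynomial-divergence trick for unipotents fails because the return dynamics on $\mathcal{H}_1$ is genuinely non-homogeneous. The EMM strategy is an \emph{exponential drift} argument: pick two generic $\nu$-typical points on a common $\overline{N}$-leaf, apply the geodesic flow $g_t$ to control how their conditional measures along an unstable-like direction drift apart, and identify, via a martingale/increments argument, a nontrivial limiting translation-invariance direction that must lie tangent to $\mathrm{supp}(\nu)$. The ingredients one must first assemble are a Margulis-function / quantitative non-divergence estimate (in the spirit of Masur's criterion and Minsky–Weiss) forcing $g_t$-recurrence to a fixed compact set; entropy versus Lyapunov-exponent bounds linking dimensions of conditional measures on unstable leaves to positive exponents of the Kontsevich–Zorich cocycle; and a careful choice of flow-box coordinates adapted to the Forni / Avila–Viana semisimplicity of that cocycle, in order to control unstable-holonomy distortion. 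Iterating this argument Benoist–Quint-style produces extra invariance directions beyond $P$, and the Mautner phenomenon recorded earlier upgrades the resulting $P$-invariance to full $SL_2(\mathbb{R})$-invariance, completing the classification.

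Granting the classification, step two is essentially bookkeeping. Fix $M \in \mathcal{H}_1$ and form the Cesàro-type averages
\[
\mu_R := \frac{1}{R} \int_0^{R} \frac{1}{2\pi} \int_0^{2\pi} \delta_{\, g_s r_\theta \cdot M}\, d\theta\, ds.
\]
Quantitative non-divergence gives tightness of $\{\mu_R\}$, so pass to a weak-$*$ limit $\nu$. By construction $\nu$ is $P$-invariant, hence by step one it is affine and supported on an affine invariant submanifold $\mathcal{M}$ containing $M$. A standard Chebyshev / open-set argument applied to $\nu$ then shows that every open subset of $\mathcal{M}$ meets $SL_2(\mathbb{R})\cdot M$, so $\overline{SL_2(\mathbb{R})\cdot M} = \mathcal{M}$, which is affine invariant by construction. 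The hard part is unambiguously the exponential-drift argument inside the measure classification; once that is in place, the topological rigidity statement of the theorem follows by a soft equidistribution argument.
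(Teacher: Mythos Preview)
The paper does not prove this theorem at all: it is quoted as Theorem 2.1 of \cite{EMM} in the ``Key recent results'' subsection and used as a black box throughout. So there is no ``paper's own proof'' to compare against; your sketch is being measured against the actual Eskin--Mirzakhani and Eskin--Mirzakhani--Mohammadi papers rather than anything in this article.

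As a summary of that external work, your outline is broadly faithful: the two-layer structure (measure classification for $P$-ergodic measures via exponential drift, then an equidistribution argument to pin down orbit closures) is exactly the architecture of \cite{EMM} together with the Eskin--Mirzakhani measure classification. Two places where your step-two sketch elides real content: first, a weak-$*$ limit of your averages $\mu_R$ need not be $P$-ergodic, only $P$-invariant, so the measure classification does not apply directly---one must decompose into ergodic components and invoke the countability/partial-order structure on affine invariant submanifolds to rule out nontrivial mixtures and to identify the smallest one containing $M$; second, showing that such a limit is $N$-invariant (not just $A$-invariant from the $g_s$-averaging) already requires an argument. These are precisely the issues that make \cite{EMM} a separate paper from the measure-classification paper, so ``essentially bookkeeping'' undersells it. But for the purposes of this article none of that matters: the theorem is simply imported.
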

Here $\mathcal{H}_1$ is the stratum of unit area surfaces. 

An affine invariant submanifold $\mathcal{M}_1 \subset \mathcal{H}_1$ is the support of an ergodic $SL_2(\mathbb{R})$-invariant measure $\nu_1$ such that 
\begin{enumerate}
\item $\mathcal{M}_1$ is an immersed submanifold (i.e. $\mathcal{M}_1$ is the image of a manifold $\mathcal{N}$ under a proper continuous map $f$) and the set of self-intersection points of $\mathcal{M}_1$ is closed and has $\nu_1$-measure $0$,
\item each point of $\mathcal{N}$ has a neighborhood $U$ such that $\mathbb{R} f(U)$ is given by a complex linear subspace defined over $\mathbb{R}$ in period coordinates, and
\item each point of $\mathcal{N}$ has a neighborhood $U$ such that, if $\nu$ is the measure supported on $\mathcal{M}=\mathbb{R} \mathcal{M}_1$ so that $d \mu = d\nu_1 da$, the restriction of $\nu$ to $\mathbb{R}f(U)$ is an affine linear measure in the period coordinates.
\end{enumerate}

Each $SL_2(\mathbb{R})$ invariant manifold is the support of a unique $SL_2(\mathbb{R})$-invariant ergodic probability measure. We will refer to this measure as the measure associated with the affine invariant submanifold.  (An affine invariant submanifold may properly contain a smaller affine invariant submanifold which has its own measure and whose support is, of course, a proper subset of our initial affine invariant submanifold.)
\color{black}

 \begin{theorem}\label{thm:circle hits}( \cite[Theorem 2.6]{EMM}) Let $M$ be  a translation surface,  $\overline{SL_2(\mathbb{R}) \cdot M}:=\mathcal{M}$, and let $\mu$ be the affine invariant measure associated to $\mathcal{M}$.  Let $\phi \in C_c(\mathcal{M})$. Then for any $\epsilon>0$ and any interval $I \subset [0,2\pi)$ there exists 
 $T_0$ such that  $T>T_0$ implies
  $$\left| \frac 1 T \int_{0}^{T} \frac 1 {|I|}\int_I \phi(g_tr_{\theta}M) \ d\theta dt-\int_{\mathcal{M}} \phi \  d\mu \right|<\epsilon.$$
 \end{theorem}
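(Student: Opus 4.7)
The plan is to view the double average as testing $\phi$ against an empirical probability measure
$$\nu_T(\phi) := \frac{1}{T|I|}\int_0^T\int_I \phi(g_t r_\theta M)\, d\theta\, dt$$
and show that every weak-$\ast$ subsequential limit of $\{\nu_T\}$ equals the affine measure $\mu$ on $\mathcal{M}$. The first step is non-divergence: using quantitative non-divergence estimates of Eskin--Masur type (controlling short saddle connections via an $SL_2(\mathbb{R})$-averaged proper function on the stratum), show that $\{\nu_T\}$ is tight, so by Prokhorov's theorem it admits a weak-$\ast$ limit $\nu$, a probability measure supported on the closed set $\mathcal{M}$.

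Next, verify that $\nu$ is invariant under the upper-triangular group $P = AN$. Invariance under $g_s$ is immediate, since replacing the interval $[0,T]$ by $[s,T+s]$ in the definition of $\nu_T$ changes the integral by $O(1/T)$. Invariance under $h_s$ uses the commutation $h_s g_t = g_t h_{s e^{-2t}}$: writing $h_s(g_t r_\theta M) = g_t(h_{se^{-2t}} r_\theta M)$ and applying uniform continuity of $\phi \in C_c(\mathcal{M})$, the difference $\phi(g_t h_{se^{-2t}} r_\theta M) - \phi(g_t r_\theta M)$ tends to zero uniformly in $\theta$ as $t \to \infty$, so its $\frac{1}{T}\int_0^T dt$ average vanishes in the limit. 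Hence $\nu$ is $P$-invariant. The Mautner phenomenon stated in Section~\ref{ss:dynamicalPreliminaries} then upgrades $P$-invariance of each ergodic component to $SL_2(\mathbb{R})$-invariance.

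Now invoke Theorem \ref{t:affineInvariantSubmfld}, or more precisely its measure-theoretic counterpart from \cite{EMM}, which classifies ergodic $P$-invariant probability measures on the stratum as affine measures associated with affine invariant submanifolds. Decomposing $\nu$ ergodically, each ergodic component is of the form $\mu_{\mathcal{N}}$ for some affine invariant $\mathcal{N}\subseteq \mathcal{M}$. The last and most delicate step is to rule out mass on proper affine invariant submanifolds $\mathcal{N}\subsetneq \mathcal{M}$; this is the content of the isolation/avoidance estimates of \cite{EMM}, which assert that $g_t r_\theta M$ spends a negligible fraction of $(t,\theta)$-time in any preassigned small neighborhood of such an $\mathcal{N}$. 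Granting this, $\nu=\mu$, and since every subsequential limit coincides with $\mu$, the full family $\nu_T$ converges weakly to $\mu$, yielding the quantitative bound uniformly in $T>T_0$. The hard part, as always in the EMM program, is this last exclusion step; everything else is essentially formal given non-divergence, the Mautner phenomenon, and measure classification.
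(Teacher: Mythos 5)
Note first that this statement is cited as Theorem~2.6 of \cite{EMM}; the paper does not reprove it, so there is no internal proof to compare against, and I assess your sketch on its own merits.

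Your outline correctly identifies the three heavy inputs from \cite{EMM} (tightness via quantitative nondivergence, classification of $P=AN$-invariant ergodic measures as affine, and the avoidance estimates that exclude mass on proper affine submanifolds), but the one step you actually argue in detail, the $N$-invariance of a weak-$\ast$ limit $\nu$, is wrong as written. You use $h_s g_t = g_t h_{se^{-2t}}$ to claim that $\phi(g_t h_{se^{-2t}} r_\theta M) - \phi(g_t r_\theta M) \to 0$ uniformly because $h_{se^{-2t}} \to \mathrm{id}$. But $g_t h_{se^{-2t}} = h_s g_t$ exactly, so the ``difference'' you are bounding is just $\phi(h_s g_t r_\theta M) - \phi(g_t r_\theta M)$, the very quantity you need to average away, and it certainly does not vanish pointwise: $h_s$ is a fixed bounded element, and conjugation by $g_t$ \emph{expands} the $N$-direction rather than contracting it, so the smallness of $h_{se^{-2t}}$ is destroyed by $g_t$. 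The actual mechanism (compare Lemma~\ref{lem:track hor}) runs the opposite way: $g_t$ stretches a small arc $\{r_\theta M\}$ into an approximate horocycle segment of length on the order of $e^{2t}$, and integrating $\phi$ over such a long segment is nearly unchanged by a fixed $h_s$ because only an $O(1)$ boundary piece of an $O(e^{2t})$-length segment moves out of range; the Ces\`aro average in $t$ then makes this exact in the limit. Separately, the Mautner phenomenon does not ``upgrade $P$-invariance of each ergodic component to $SL_2(\mathbb{R})$-invariance.'' Mautner is a representation-theoretic statement used in this paper to equate $H$-ergodicity with $SL_2(\mathbb{R})$-ergodicity for a measure that is \emph{already} $SL_2(\mathbb{R})$-invariant; the passage from $P$-invariance to $SL_2(\mathbb{R})$-invariance is exactly the deep measure-rigidity theorem of \cite{EMM} that you invoke in your next sentence, and should not be credited to Mautner.
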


\subsubsection{Minimal sets for the horocycle flow}

The closure in a stratum $\mathcal{H}_1$ of any $SL_2(\mathbb{R})$-orbit contains an $H$-orbit closure.  Smillie and Weiss (\cite{MinimalSets}) showed that every $H$-orbit closure contains a \emph{minimal} set for the horocycle flow, and they classified the minimal sets for the horocycle flow.  (A \emph{minimal set} for the action of a group $A$ on a space $X$ is a nonempty, closed, $A$-invariant subset of $X$ that is minimal with respect to inclusion.)  

\begin{proposition}[\cite{MinimalSets}]
\label{prop:minimalSets}
Let $M$ be a half-translation surface that is periodic in the horizontal direction. Let $\mathcal{O}=\overline{H \cdot M}$. Then
\begin{enumerate}
\item $M$ admits a cylinder decomposition $M=C_1 \cup \dots \cup C_r$, where each $C_i$ is a cylinder whose interior is a union of horizontal core curves. 
\item There is an isomorphism between $\mathcal{O}$ and a $d$-dimensional torus, where $d$ is the dimension of the $\mathbb{Q}$-linear subspace of $\mathbb{R}$ spanned by the moduli of $C_1,\dots,C_r$.  This isomorphism conjugates the $H$-action on $\mathcal{O}$ with a one-parameter translation flow.  
\item The restriction of the $H$-action to $\mathcal{O}$ is minimal.
\end{enumerate}
\end{proposition}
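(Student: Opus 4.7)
The plan is to parametrize $\overline{H\cdot M}$ by the twist parameters of the cylinders of $M$, show that $H$ acts on this parameter space as a constant-speed linear flow with velocity vector the moduli $(m_1,\ldots,m_r)$, and then reduce conclusions (2) and (3) to classical Kronecker-type statements about one-parameter subgroups of tori.

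Conclusion (1) is essentially definitional: the hypothesis that $M$ is periodic in the horizontal direction, as defined in $\S\ref{ss:dynamicalPreliminaries}$, already packages $M$ as a union of horizontal cylinders $C_1,\ldots,C_r$ whose interiors are foliated by horizontal core curves. Record each cylinder by its circumference $c_i$, height $h_i$, and twist $\tau_i \in \mathbb{R}/c_i\mathbb{Z}$ (the horizontal offset between the top and bottom gluings in cylinder coordinates), and set $m_i := h_i/c_i$.

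For conclusion (2), in cylinder coordinates $(x,y)$ on $C_i$, the horocycle flow acts by $(x,y)\mapsto(x+ty,y)$. This preserves the horizontal foliation and, in particular, the combinatorial cylinder decomposition together with the data $(c_i,h_i)$; only the twists change, by $\tau_i \mapsto \tau_i + th_i$. After rescaling $\tilde\tau_i := \tau_i/c_i \in \mathbb{R}/\mathbb{Z}$, the orbit $H\cdot M$ becomes a straight line in the torus $\mathbb{T}^r := (\mathbb{R}/\mathbb{Z})^r$ with velocity vector $(m_1,\ldots,m_r)$. By Kronecker's theorem the closure of this line is the closed subgroup $T' \leq \mathbb{T}^r$ that annihilates the integer-relation lattice $L := \{\mathbf{n}\in\mathbb{Z}^r : \sum n_i m_i = 0\}$; since $\mathrm{rank}(L) = r - d$ where $d = \dim_{\mathbb{Q}} \mathrm{span}_{\mathbb{Q}}(m_1,\ldots,m_r)$, the subgroup $T'$ is a $d$-dimensional subtorus. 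Pulling this back under the twist parametrization identifies $\mathcal{O}$ with a translate of $T'$, on which $H$ acts by the one-parameter subgroup generated by $(m_1,\ldots,m_r)$. Conclusion (3) is then automatic: by construction $T'$ is the closure of the orbit of $0$ under this subgroup, so the subgroup acts minimally on $T'$, hence on $\mathcal{O}$.

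The main obstacle is making the twist parametrization genuinely a homeomorphism onto its image inside the stratum $\mathcal{H}_1$, so that topological closure on the stratum side matches topological closure on the torus side. To handle this, I would choose a basis for $H_1(M,\Sigma;\mathbb{Z})$ adapted to the cylinder decomposition — the horizontal core curves of the $C_i$ together with an arc in each cylinder connecting a singularity on its bottom edge to a singularity on its top edge — and observe that in the corresponding period coordinates on $\mathcal{H}_1$ only the periods of the crossing arcs vary with the twists, and vary affinely. This reduces the embedding claim, and the identification of closures in $\mathcal{H}_1$ with closures in $\mathbb{T}^r$, to an elementary linear statement in period coordinates (up to the possible finite quotient coming from affine automorphisms of $M$ that permute the cylinders, which does not affect the dimension of $\mathcal{O}$ or the minimality conclusion).
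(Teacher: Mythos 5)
This proposition is not proved in the paper at all --- it is quoted from Smillie--Weiss \cite{MinimalSets} as background. So there is no ``paper proof'' to compare against, only the original argument in the cited reference, which your proposal does track: the Smillie--Weiss proof is indeed built on the twist parametrization, the observation that $h_t$ acts on the twist coordinates by $\tau_i \mapsto \tau_i + t h_i$, and a Kronecker-type argument for the closure of a one-parameter line in a torus. Your computation that after renormalizing $\tilde\tau_i = \tau_i/c_i$ the velocity vector is $(m_1,\dots,m_r)$, and that the closure is the $d$-dimensional subtorus dual to the relation lattice $L$ with $\mathrm{rank}(L) = r - d$, is correct.

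The one place where you wave your hands is precisely where the Smillie--Weiss argument spends its effort, and it is not quite as innocuous as ``a finite quotient that does not affect the conclusions.'' The continuous map from the compact $d$-torus $T'$ into the stratum automatically has closed image, and that image must equal $\mathcal{O}$; no issue there. The issue is the claim that $\mathcal{O}$ is \emph{itself} a torus and that $H$ acts on it as a one-parameter \emph{translation} flow. Translation equivalences of $M$ that permute cylinders act on the ambient torus $\mathbb{T}^r$ by a permutation of coordinates composed with a shift, not by a pure translation, and a quotient of a torus by such a finite group need not be a torus in general. What saves the day (and what needs to be said) is that any such translation equivalence must preserve the actual twist configuration of $M$, hence fixes the basepoint of the coset $T'$, and its restriction to the connected subtorus $T'$ is an \emph{affine} automorphism fixing a point and commuting with the one-parameter flow; one then argues that the only such automorphism acting nontrivially on a minimal linear flow is trivial, or else handles the finite covering group explicitly as Smillie--Weiss do. As written, your appeal to ``does not affect the dimension or minimality'' quietly assumes the hardest part of conclusion (2), namely that the quotient stays a torus and the flow stays a translation flow. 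The rest of your argument --- conclusion (1) being definitional, the linear flow computation, the Kronecker closure, and deducing (3) from the group structure --- is correct and matches the standard proof.
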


\begin{theorem}[\cite{MinimalSets}]
\label{t:SmillieWeissMinimalCharacterization}
If $M$ is a half-translation surface such that $\overline{H \cdot M}$ is contained in a compact subset of a single stratum, then the flow along any leaf of the horizontal foliation is periodic.  In particular, any minimal set for the horocycle flow is as described in Proposition \ref{prop:minimalSets}.
\end{theorem}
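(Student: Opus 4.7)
The plan is to prove the contrapositive: if the horizontal foliation of $M$ has a non-closed, non-singular leaf, then $\overline{H\cdot M}$ fails to lie in any compact subset of the stratum. The two key ingredients are (a) the Mumford-type characterization of precompactness in a stratum as a uniform lower bound on the length of the shortest saddle connection, and (b) the elementary computation that a saddle connection of $M$ with holonomy $(x,y)\in\mathbb{R}^2$ is carried by $h_s$ to a saddle connection of $h_sM$ with holonomy $(x+sy,y)$; in particular, choosing $s=-x/y$ realizes its length on $h_sM$ as the minimum value $|y|$.

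Now suppose the horizontal foliation of $M$ is not a cylinder decomposition. By the standard structure theorem for measured foliations on translation surfaces, $M$ then contains a minimal component $\mathcal{C}$ for the horizontal flow. Fixing a vertical transversal $\tau\subset\mathcal{C}$, the first-return map of the horizontal flow to $\tau$ is a minimal, non-periodic interval exchange transformation. Applying Rauzy--Veech induction (or, equivalently, exploiting Poincar\'e recurrence combined with the finitely many discontinuities of the return map) yields a nested sequence of induced subintervals $\tau_n\subset\tau$ with $|\tau_n|\to 0$, whose endpoints correspond to separatrix traces. Translating this combinatorial data back to the surface produces a sequence of saddle connections $\gamma_n$ on $M$ with holonomies $(x_n,y_n)$ such that $|y_n|\to 0$; crucially, the vertical component itself, not merely the slope $y_n/x_n$, tends to zero.

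Setting $s_n:=-x_n/y_n$, the surface $h_{s_n}M$ then carries a saddle connection of length $|y_n|\to 0$, so $\{h_{s_n}M\}$ leaves every compact subset of the stratum, contradicting the hypothesis. Hence the horizontal flow on $M$ must be a cylinder decomposition. For the ``In particular'' clause: any minimal set $Y$ for the horocycle flow is compact (by a Zorn's lemma argument applied to closed $H$-invariant subsets of any precompact $H$-orbit closure), and $Y=\overline{H\cdot N}$ for every $N\in Y$, so the main statement applies to each such $N$; therefore every point of $Y$ is horizontally periodic, and Proposition \ref{prop:minimalSets} identifies $Y$ with the claimed torus structure. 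The principal technical obstacle is the middle step: extracting bona fide saddle connections with genuinely small vertical component from the IET data. This uses the discreteness of the holonomy vectors of saddle connections together with careful geometric control of Rauzy--Veech induction, and is where the argument most fully exploits the specific structure of translation surfaces.
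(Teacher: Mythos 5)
This theorem is not proved in the paper; it is quoted from Smillie--Weiss \cite{MinimalSets}, so there is no in-house argument to compare against. Evaluated on its own terms, your overall strategy is sound and, I believe, close in spirit to the original: pass to the contrapositive, use the Mumford/systole characterization of compactness in a stratum, exploit the elementary fact that $h_s$ sends a holonomy vector $(x,y)$ to $(x+sy,y)$ so that a saddle connection with $y\neq 0$ attains its minimal length $|y|$ at $s=-x/y$, and reduce to the existence of a minimal component of the horizontal foliation. The ``in particular'' clause is also handled correctly (using that a minimal set $Y$ satisfies $Y=\overline{H\cdot N}$ for every $N\in Y$ and that the hypothesis applies to each such $N$), modulo the standard Zorn argument producing a compact minimal set inside a precompact $H$-invariant set.

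The genuine gap is the middle step, which you flag yourself but do not close: extracting, from a minimal component, honest saddle connections $\gamma_n$ with $y_n\neq 0$ and $|y_n|\to 0$. The Rauzy--Veech sketch identifies the right combinatorial phenomenon (nested induced intervals shrinking to zero), but what it naturally produces is a \emph{path} from singularity to singularity built from a long horizontal leg along a separatrix plus a short vertical leg along the transversal, with total vertical holonomy $<\epsilon$. Passing to the geodesic representative of that path gives a concatenation of saddle connections whose vertical holonomies merely \emph{sum} to something of size $<\epsilon$; cancellation could in principle leave every individual saddle connection with large vertical part. Ruling this out --- e.g.\ by arguing that a suitable immersed triangle or rectangle bounded by the horizontal leg, the vertical leg, and the ``diagonal'' is embedded, or by a more careful use of first-return structure --- is exactly where the content lies, and neither Rauzy--Veech induction per se nor the discreteness of the set of holonomy vectors obviously supplies it. As written, the proof proposal is an accurate outline with the technical heart left as an acknowledged placeholder.
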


\subsubsection{Cylinder Deformations}

This characterization of minimal sets for the horocycle flow is a key ingredient in Wright's proof of the ``cylinder deformation theorem" (\cite{CylinderDeformations}).  Given a collection of horizontal cylinders $\mathcal{C}$ of a surface $M$, let $\eta_{\mathcal{C}} \in T_M(\mathcal{M}) \subset H^1(S,\Sigma; \mathbb{C})$ be the derivative (with respect to $t$) of $h_t^{\mathcal{C}}$ at $M$ in local period coordinates, where $h_t^{\mathcal{C}}$ is the ``cylinder shear" which applies the matrix $h_t$ to the cylinders of $\mathcal{C}$ and leaves the rest of the surface unchanged.  Denote by $a_t^{\mathcal{C}}$ the ``cylinder stretch" which applies the matrix $a_t= \bigl(\begin{smallmatrix}
1&0\\ 0&e^t
\end{smallmatrix} \bigr) \in GL(2,\mathbb{R})$ to the cylinders of $\mathcal{C}$ and leaves the rest of the surface unchanged.  
 
\begin{definition}
Let $M$ be a flat surface and $\mathcal{M}=\overline{GL_2(\mathbb{R}) \cdot M}$.  Two cylinders of $M$ are 
\begin{enumerate}
\item $\mathcal{M}$-parallel if they are parallel at $M$ and at every nearby surface $M^{\prime} \in \mathcal{M}$.
\item $\mathcal{M}$-collinear if their core curves $\alpha,\beta \in H_1(M,\Sigma;\mathbb{Z})$ have collinear images in $T^M(\mathcal{M})$.  
\end{enumerate}
\end{definition} 
Wright observed (\cite{CylinderDeformations}) that, as a consequence of Theorem \ref{t:affineInvariantSubmfld}, $\mathcal{M}$-parallel and $\mathcal{M}$-collinear are equivalent notions, i.e. two cylinders are $\mathcal{M}$-parallel if and only if they are $\mathcal{M}$-collinear. 

\begin{lemma}[Lemma 4.11, \cite{CylinderDeformations}]
 \label{lem:Wright}
For any horizontally periodic surface $M \in \mathcal{M}$  and equivalence class $\mathcal{C}$ of $\mathcal{M}$-parallel horizontal cylinders, $\eta_{\mathcal{C}} \in T^M(\mathcal{M})$. 
\end{lemma}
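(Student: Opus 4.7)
The plan is to combine the Smillie--Weiss torus structure for $\overline{H \cdot M}$ with a perturbation argument inside $\mathcal{M}$. Write $M$ as a union of horizontal cylinders $C_1, \ldots, C_r$ with $\mathcal{M}$-parallel classes $\mathcal{C}_1, \ldots, \mathcal{C}_k$, so that the full horocycle tangent at $M$ decomposes as $\eta = \sum_{i=1}^r \eta_{C_i} = \sum_j \eta_{\mathcal{C}_j}$. Since $h_t \cdot M$ is a curve in $\mathcal{M}$, the full sum $\eta$ lies in $T^M(\mathcal{M})$; the task is to separate out the individual summands $\eta_{\mathcal{C}_j}$.

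By Proposition \ref{prop:minimalSets}, $\overline{H \cdot M}$ is a torus of real dimension $d$ equal to the dimension over $\mathbb{Q}$ of the span of the moduli $m_1, \ldots, m_r$, and this torus is contained in $\mathcal{M}$. Hence its tangent space $T^M(\overline{H \cdot M}) \subset T^M(\mathcal{M})$ is a $d$-dimensional real subspace, spanned by cylinder-shear vectors $\sum_i t_i \eta_{C_i}$ whose coefficients satisfy the linear constraints imposed by the $\mathbb{Q}$-relations among the moduli. I would then perturb $M$ to a surface $M' \in \mathcal{M}$ at which the moduli $m_i(M')$ are $\mathbb{Q}$-linearly independent. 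At such a generic $M'$ the Smillie--Weiss torus is $r$-dimensional, so its tangent space is the full span of the shears $\{\eta^{M'}_{C_i}\}_{i=1}^r$, and in particular contains each partial sum $\eta^{M'}_{\mathcal{C}_j}$. Since $T^{\bullet}(\mathcal{M})$ is locally constant in period coordinates, passing to the limit $M' \to M$ along a curve in $\mathcal{M}$ yields $\eta^M_{\mathcal{C}_j} \in T^M(\mathcal{M})$.

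The hard part will be the case where $\mathcal{M}$ is ``rationally rigid'': some $\mathbb{Q}$-linear relation among the moduli may persist throughout a neighborhood of $M$ in $\mathcal{M}$, blocking the genericity used above. To handle this case one uses the $\mathcal{M}$-parallel hypothesis directly. Collinearity of the core curves $\alpha_{C_i}$, $i \in \mathcal{C}$, as functionals on $T^M(\mathcal{M})$ means that $c_j \alpha_{C_i} - c_i \alpha_{C_j}$ lies in the annihilator of $T^M(\mathcal{M})$ in $H_1(M, \Sigma; \mathbb{C})$ for all $C_i, C_j \in \mathcal{C}$. Combined with the tautological identity $\int_v \eta = 0$ for $v$ in this annihilator, one tries to force $\int_v \eta_{\mathcal{C}} = \sum_{i \in \mathcal{C}} h_i (v \cdot \alpha_{C_i}) = 0$, which is precisely the dual condition $\eta_{\mathcal{C}} \in T^M(\mathcal{M})$. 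Showing that every forced rational relation among the moduli on $\mathcal{M}$ splits along the $\mathcal{M}$-parallel partition is the delicate heart of the argument.
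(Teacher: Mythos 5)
The paper does not prove this lemma; it is quoted from Wright's paper, so the relevant comparison is with Wright's proof. Your proposal has a genuine gap, and it is exactly the one you flag in your last paragraph -- but the gap is not a corner case to be patched at the end, it is the main content of the lemma.

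Your first argument asks to perturb $M$ inside $\mathcal{M}$ to a nearby horizontally periodic $M'$ (with the same cylinder topology) whose moduli are $\mathbb{Q}$-independent, and then to invoke the $r$-dimensional Smillie--Weiss torus at $M'$. If such a perturbation were always available, its tangent space would contain \emph{every} individual shear $\eta_{C_i}(M')$, and passing to the limit along $M' \to M$ (using local constancy of $T^{\bullet}\mathcal{M}$) would give $\eta_{C_i}(M) \in T^M(\mathcal{M})$ for each single cylinder $C_i$ -- a statement strictly stronger than Wright's lemma and false in general. The simplest obstruction is a lattice surface $M$: there $\mathcal{M} = GL_2^+(\mathbb{R}) \cdot M$ is rigid, so no such perturbation exists, all horizontal cylinders lie in a single $\mathcal{M}$-parallel class, and the individual shear of one cylinder of that class does not lie in the 2-complex-dimensional $T^M(\mathcal{M})$. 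More generally, the defining real-linear equations of $\mathcal{M}$ can lock moduli together across a cylinder class (that is what ``$\mathcal{M}$-parallel with more than one cylinder'' means in practice), so perturbation to $\mathbb{Q}$-independence is typically impossible. Thus the ``rationally rigid'' situation you set aside is not a side case; it is the generic case, and your first argument gives no information there.

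Your sketch for the rigid case correctly records that $\mathcal{M}$-collinearity places $c_j\alpha_{C_i} - c_i\alpha_{C_j}$ in the annihilator $(T^M\mathcal{M})^{\perp}$, and that $\eta = \sum_i h_i(\cdot \cap \alpha_{C_i})$ annihilates $(T^M\mathcal{M})^{\perp}$; but these two facts alone do not yield $\sum_{i \in \mathcal{C}} h_i\, (v \cdot \alpha_{C_i}) = 0$ for \emph{every} $v \in (T^M\mathcal{M})^{\perp}$. You would need, at a minimum, some statement relating the intersection pairing restricted to $(T^M\mathcal{M})^{\perp}$ to the evaluation pairing that defines collinearity (in Wright's and Avila--Eskin--M\"oller's work this is where symplecticity of $p(T\mathcal{M})$ enters), and you would still have to separate the equivalence classes from one another. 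Wright's actual proof does not perturb to generic moduli; it works at $M$ itself, using the Smillie--Weiss torus together with the local constancy of $T^{\bullet}\mathcal{M}$ along that torus and a more delicate analysis than a single duality computation. As it stands, your write-up is a reasonable roadmap but not a proof: the ``delicate heart of the argument'' that you name is genuinely missing.
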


Since Theorem \ref{t:affineInvariantSubmfld} asserts that $\mathcal{M}$ is given (locally) by linear equations in period coordinates with real coefficients, a neighborhood of $M$ in $\mathcal{M}$ is identified with a neighborhood of $0$ in $T_M\mathcal{M}$.  Thus, the assertion that $\eta_{\mathcal{C}}$ is in $T^M(\mathcal{M})$ implies that sufficiently small deformations of $M$ in the direction (in $T^M(\mathcal{M})$) specified by $\eta$ remain in the orbit closure $\mathcal{M}$.  We denote by $p$ the projection $p:H^1(S,\Sigma; \mathbb{C}) \rightarrow H^1(S;\mathbb{C})$ from relative cohomology to absolute cohomology.

\begin{theorem} 
\label{t:wright1.7}
(\cite[Theorem 1.7]{CylinderDeformations}) If $dim_{\mathbb{C}} \ p(T(\mathcal{M})) > 2$ then there exist translation surfaces in $\mathcal{M}$ which are not completely periodic. 
\end{theorem}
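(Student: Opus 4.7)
I would prove the contrapositive: assuming every $M' \in \mathcal{M}$ is completely periodic, show $\dim_{\mathbb{C}} p(T(\mathcal{M})) \leq 2$. The overall strategy is to use the Smillie–Weiss classification of $H$-minimal sets to produce horizontally periodic surfaces inside $\mathcal{M}$, apply Lemma \ref{lem:Wright} to obtain explicit tangent vectors at each such surface, then exploit complete periodicity to show that the projections of these vectors to absolute cohomology cannot exceed the $2$-complex-dimensional $\mathfrak{gl}_2(\mathbb{R})$-plane.

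First, pick any $M' \in \mathcal{M}$ and consider $\overline{H\cdot M'} \subset \mathcal{M}$. This contains an $H$-minimal set by Smillie–Weiss, and by Proposition \ref{prop:minimalSets} every surface in such a minimal set is horizontally periodic. So we obtain a horizontally periodic $M_0 \in \mathcal{M}$. Let $\mathcal{C}_1,\dots,\mathcal{C}_k$ be the equivalence classes of $\mathcal{M}$-parallel horizontal cylinders of $M_0$. By Lemma \ref{lem:Wright}, $\eta_{\mathcal{C}_i} \in T^{M_0}(\mathcal{M})$ for each $i$, and the cylinder stretches $a_t^{\mathcal{C}_i}$ and shears $h_t^{\mathcal{C}_i}$ trace out honest $1$-parameter families in $\mathcal{M}$ through $M_0$, each preserving horizontal periodicity.

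Next, I would identify $p(\eta_{\mathcal{C}_i})$. In period coordinates, $\eta_{\mathcal{C}_i}$ evaluates on a relative $1$-cycle by reading off the horizontal holonomy accumulated as the cycle crosses cylinders in $\mathcal{C}_i$. Projecting to absolute cohomology kills the $\Sigma$-contributions, and $p(\eta_{\mathcal{C}_i})$ becomes a class Poincaré dual to a real-weighted sum of horizontal core curves of $\mathcal{C}_i$, so it lies in a single complex line in $H^1(S;\mathbb{C})$. Applying the same analysis to $r_{\pi/2} M_0 \in \mathcal{M}$ (vertically periodic) produces another complex line, and together they span precisely the $2$-complex-dimensional tangent subspace coming from the $GL_2(\mathbb{R})$-action.

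Finally, I would argue that no further directions appear in $p(T\mathcal{M})$. Suppose toward contradiction $v \in T^{M_0}\mathcal{M}$ with $p(v)$ outside this $2$-plane. Integrating $v$ (valid because $\mathcal{M}$ is locally linear in period coordinates) produces a curve $M_t \in \mathcal{M}$ along which some absolute period varies independently of the $GL_2(\mathbb{R})$-directions. The complete-periodicity hypothesis forces that for every $t$ and every cylinder direction $\theta$ of $M_t$, the surface decomposes into cylinders, which rigidly constrains how absolute periods can vary along $(M_t)$. One then shows that this forces $p(v)$ into the $2$-plane after all, i.e. one derives a linear relation in absolute cohomology that $p(v)$ must satisfy.

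The main obstacle is this last step. Concretely, one wants to pick $t$ so that the circumferences (or heights) of two previously $\mathcal{M}$-parallel cylinder classes become incommensurable on $M_t$ in some transverse periodic direction $\theta$ of the original surface, and then show that a saddle connection in direction $\theta$ on $M_t$ fails to close up into a cylinder decomposition. Making this geometric failure rigorous — selecting $\theta$, controlling the relevant saddle connection under the deformation, and checking that the resulting non-periodicity persists — is where Wright's argument concentrates, and where a complete proof requires genuine work beyond the formal setup above.
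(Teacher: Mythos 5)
This theorem is imported into the paper as a citation to Wright's \cite{CylinderDeformations} and is not proved in the paper, so there is no in-house argument to compare against; I assess your outline on its own terms. Your general framing (Smillie--Weiss to produce horizontally periodic surfaces, Lemma~\ref{lem:Wright} to get cylinder deformations in the tangent space) is a reasonable starting point, but the middle step is not correct as stated. You claim that the classes $p(\eta_{\mathcal{C}_i})$ together with their vertical analogues span \emph{precisely} the $2$-complex-dimensional $GL_2(\mathbb{R})$-tangent plane, but this holds only when there is a single $\mathcal{M}$-equivalence class of horizontal cylinders: with $k>1$ classes the $p(\eta_{\mathcal{C}_i})$ need not be collinear, and the individual $\eta_{\mathcal{C}_i}$ are not $GL_2(\mathbb{R})$-directions (only their sum is the full horocycle direction). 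Indeed the entire content of the hypothesis $\dim_{\mathbb{C}} p(T(\mathcal{M})) > 2$ is that more than two complex directions are available, so asserting the span is ``precisely'' the tautological plane begs the question. You also compare tangent vectors based at $M_0$ with vectors based at $r_{\pi/2}M_0$; these live in different tangent spaces and must be transported via the derivative of the $SL_2(\mathbb{R})$-action before any spanning claim makes sense.

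More fundamentally, the outline is missing the central device of Wright's actual proof: one does not take an arbitrary horizontally periodic surface, but one with the \emph{maximum} number of horizontal cylinders among all horizontally periodic surfaces in $\mathcal{M}$. For such a surface one has the key equality $\textrm{Twist}(M_0,\mathcal{M}) = \textrm{Pres}(M_0,\mathcal{M})$ (a non-trivial result of Wright; compare Lemma~\ref{l:TwistPresEquiv} of this paper for the version invoked there), and this is exactly what makes the dimension count close and allows one to locate a cylinder stretch producing a surface with, in some direction, both a cylinder and a minimal component. Without the maximality trick and the $\textrm{Twist}=\textrm{Pres}$ identity, the ``integrate $v$ and derive a contradiction'' step you sketch cannot be controlled --- you have no grip on where the hypothetical extra tangent direction sits relative to the twist and cylinder-preserving spaces. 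You are right that the final step is where the real work lies, but the gap is larger than you indicate because the structural scaffolding that makes that step tractable is absent from your setup.
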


\begin{definition}
The cylinder rank of an affine invariant submanifold $\mathcal{M}$ is $\frac{1}{2} \textrm{dim}_{\mathbb{C}}\  p(T(\mathcal{M}))$. 
\end{definition}
 
 \begin{definition}
 Let $M \in \mathcal{M}$ be horizontally periodic.  The \emph{twist space} of $\mathcal{M}$ at $M$ is the subspace Twist$(M,\mathcal{M})$ of $T^M(\mathcal{M})$ of cohomology classes in $T^M(\mathcal{M})$ which are zero on all horizontal saddle connections.  
 \end{definition}
 
 \begin{definition} Let $M \in \mathcal{M}$ be horizontally periodic.  The \emph{cylinder preserving space} of $\mathcal{M}$ at $M$ is the subspace of Pres$(M,\mathcal{M})$ of $T^M(\mathcal{M})$ of cohomology classes which are zero on the core curves of all horizontal cylinders.  
 \end{definition}


\section{Horocycle orbits}
 \label{sec:horocycleOrbits}

The primary goal of this section is to prove Theorem \ref{t:kakFullMeasure}.  The outline of the proof is as follows: 
\begin{itemize}

\item[Step 0] Fix an affine invariant submanifold $\mathcal{M}$ with associated ergodic invariant probability measure $\mu$.  We will show that the conclusion of Theorem \ref{t:kakFullMeasure} holds for any  translation surface $M$ such that $\mathcal{M}=\overline{SL_2(\mathbb{R})\cdot M}$.

\item[Step 1]  For any $\epsilon>0$, define the local metric $d_{\mathcal{K}_{\epsilon}}$ on  $\mathcal{K}_{\epsilon}$, the $\epsilon$-thick part of the stratum.  This metric $d_{\mathcal{K}_{\epsilon}}$ is chosen so that it is ``bounded above" by $d_{SL_2\mathbb{R}}$ (Lemma \ref{lem:less sl2}). 

\item[Step 2]  Define $(L,\epsilon)$-nice.  Use the Mautner Phenomenon to show that there exist open sets $U_{L,\epsilon}$ of arbitrarily large $\mu$-measure consisting of $(L,\epsilon)$-nice surfaces. 

\item[Step 3] Apply a result of Eskin-Mirzakhani-Mohammadi to show that for any translation surface $M$ for which $\mathcal{M}=\overline{SL_2(\mathbb{R})\cdot M}$ and any $L, \epsilon$, there exist arbitrarily large real numbers $t$ so that ``all but $\epsilon$ percent" of the ``circle" (in $\theta$) $g_t r_{\theta} \cdot M$ is in $U_{L,\epsilon}$.  

\item[Step 4] So any arc that contains $\epsilon$ percent of the ``circle" $g_tr_{\theta} \cdot M$ contains a point, say $u$, of $U_{L,\epsilon}$. But, in fact, for sufficiently big $t$, such an arc $\epsilon$-approximates the length-$L$ segment of the horocycle flow applied to $u$.  Proposition \ref{prop:dense seg} asserts that, for sufficiently large $t$, $\epsilon$-arcs of  $g_t r_{\theta} \cdot M$ are $2\epsilon$-dense in $\mathcal{K}_{\epsilon}$.  (Proposition \ref{prop:seg hit2}.)

\item[Step 5] Use the Cartan decomposition of $h_s$ to turn Proposition \ref{prop:seg hit2} into an analogous statement about $\epsilon$-arcs (in $\theta$) of $h_s r_{\theta} \cdot M$ for sufficiently large $s$.  Proposition \ref{prop:hor dense} asserts that there are arbitrarily big values of $s$ such that any $\epsilon$-arc of $h_s r_{\theta} \cdot M$ is $\epsilon$-dense in $\mathcal{K}_{\epsilon}$.  

\item[Step 6A] A slight detour from proving Theorem \ref{t:kakFullMeasure} -- we use the Baire Category Theorem in conjunction with Proposition \ref{prop:hor dense} to prove the weaker result (Theorem \ref{t:residualSet}) that there is a residual set of directions in which the horocycle flow-orbit closure equals the $SL_2(\mathbb{R})$-orbit closure.   To do this, we show that for any fixed $\epsilon$, the set $G_{\epsilon}(M)$ of angles $\theta$ for which $Hr_{\theta}$ is $\epsilon$-dense in $\mathcal{M} \cap \mathcal{K}_{\epsilon}$ is an open dense subset of $S^1$.  

\item[Step 6B]Prove that almost every horocycle is dense. This uses slightly more complicated versions of arguments from Steps 5 and 6A.

\end{itemize}
 

\subsection{Step 1}

For any fixed stratum of unit-area surfaces $\mathcal{H}_1$ and $\epsilon>0$ we denote by $\mathcal{K}_{\epsilon}$ the $\epsilon$-thick part of the stratum $\mathcal{H}_1$.  The set $\mathcal{K}_{\epsilon}$ is the subset of $\mathcal{H}_1$ consisting of all translation surfaces in $\mathcal{H}_1$ which have no saddle connection of length less than $\epsilon$.  Any such set $\mathcal{K}_{\epsilon}$ is compact.

\begin{lemma}\label{lem:less sl2} For any $\epsilon>0$ (and $\mathcal{H}_1$) there exist a metric $d_{\mathcal{K}_{\epsilon}}$ on $\mathcal{K}_{\epsilon}$ and real number $r_0>0$ such that $d_{SL_2\mathbb{R}}(A,Id)< r_0$ implies
$$d_{\mathcal{K}_\epsilon}(A\cdot M,M)<d_{SL_{2}\mathbb{R}}(A,Id)$$ for all $M \in \mathcal{K}_{\epsilon}.$
\end{lemma}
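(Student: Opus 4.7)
The plan is to exploit period coordinates together with the compactness of $\mathcal{K}_\epsilon$. Since $\mathcal{K}_\epsilon$ is a compact subset of the stratum (Mumford/Mahler compactness), I cover it by finitely many open sets $V_1,\dots,V_N\subset\mathcal{H}_1$ on each of which a choice of marking yields injective period coordinates $\phi_i\colon V_i\to H^1(S,\Sigma;\mathbb{C})\cong\mathbb{C}^d$. In each chart, the $SL_2(\mathbb{R})$-action is linear in the natural identification $\mathbb{C}\cong\mathbb{R}^2$: if $\phi_i(M)=(z_1,\dots,z_d)$ with each $z_j=x_j+iy_j$ thought of as the vector $(x_j,y_j)\in\mathbb{R}^2$, then $\phi_i(A\cdot M)=(Az_1,\dots,Az_d)$, where $A$ acts on each $\mathbb{R}^2$ factor.

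To build the metric, I glue the pullbacks of the Euclidean inner product via a smooth partition of unity subordinate to $\{V_i\}$, obtaining a Riemannian metric on an open neighborhood $W$ of $\mathcal{K}_\epsilon$, and let $\tilde d$ be its induced distance function. For $M\in\mathcal{K}_\epsilon$ and $A$ close to $\mathrm{Id}$, the linearity of the action in period coordinates yields $\|\phi_i(A\cdot M)-\phi_i(M)\|\le \|A-I\|_{\mathrm{op}}\cdot\bigl(\sum_j\|z_j\|^2\bigr)^{1/2}$. Compactness of $\mathcal{K}_\epsilon$ gives a uniform upper bound on the period vectors $\{\phi_i(M):M\in\mathcal{K}_\epsilon\cap V_i\}$, so there is a constant $C>0$ (depending on $\epsilon$, the cover, and the partition of unity) with $\tilde d(A\cdot M,M)\le C\|A-I\|_{\mathrm{op}}$ whenever $A\cdot M\in W$.

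Near the identity in $SL_2(\mathbb{R})$, the operator norm $\|A-I\|_{\mathrm{op}}$ and the Riemannian distance $d_{SL_2\mathbb{R}}(A,\mathrm{Id})$ are bi-Lipschitz equivalent, so the previous estimate upgrades to $\tilde d(A\cdot M,M)\le C'\,d_{SL_2\mathbb{R}}(A,\mathrm{Id})$ for some $C'>0$ on a neighborhood of $\mathrm{Id}$. Setting $d_{\mathcal{K}_\epsilon}:=(1/(2C'))\,\tilde d$ restricted to $\mathcal{K}_\epsilon$, and choosing $r_0$ small enough both that this estimate holds and that $A\cdot M\in W$ for every $M\in\mathcal{K}_\epsilon$ whenever $d_{SL_2\mathbb{R}}(A,\mathrm{Id})<r_0$ (possible by continuity of the $SL_2(\mathbb{R})$-action and compactness of $\mathcal{K}_\epsilon$), gives the required strict inequality $d_{\mathcal{K}_\epsilon}(A\cdot M,M)\le \tfrac{1}{2}\,d_{SL_2\mathbb{R}}(A,\mathrm{Id})<d_{SL_2\mathbb{R}}(A,\mathrm{Id})$.

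The main obstacle is essentially bookkeeping: ensuring that the local charts can be glued coherently, that the period vectors are uniformly bounded on the compact thick part, and that $A\cdot M$ does not escape the open neighborhood on which $\tilde d$ is defined. All of these are routine consequences of the compactness of $\mathcal{K}_\epsilon$ and the smoothness of the $SL_2(\mathbb{R})$-action on the stratum; no deeper input than the definition of period coordinates is required.
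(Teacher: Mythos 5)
Your proof is correct and takes essentially the same approach as the paper's: both build a metric from period coordinates on a finite cover of the compact set $\mathcal{K}_\epsilon$, exploit the linearity of the $SL_2(\mathbb{R})$-action in period coordinates together with a compactness bound on the period vectors to get a displacement estimate of the form (constant) $\cdot\, d_{op}(A,\mathrm{Id})$, pass to $d_{SL_2\mathbb{R}}$ via local Lipschitz equivalence, and rescale the metric to kill the constant. The only cosmetic difference is that you glue the chart metrics via a smooth partition of unity to get a genuine Riemannian metric, whereas the paper uses a piecewise path-length metric from the chart Euclidean structures and then rescales; both work, and your version arguably handles the chart-overlap regions a bit more cleanly.
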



\noindent By $d_{SL_2\mathbb{R}}$ we mean a left-invariant metric on $SL_2(\mathbb{R})$.\footnote{See Section 9.3.2 of \cite{EinsiedlerWard} for the construction of a left-invariant metric.}  Our proof of Lemma \ref{lem:less sl2} also uses a metric $d_{op}$ on $SL_2(\mathbb{R})$; by $d_{op}(A_1,A_2)$ we mean the operator norm of $(A_1 - A_2)$ acting on $\mathbb{R}^2$.

\begin{proof}  Period coordinates yield local metrics modeled on Euclidean space on open ``patches" covering the stratum $\mathcal{H}_1$.  By the compactness of $\mathcal{K}_{\epsilon}$, we may restrict our attention to a finite collection of patches $U_j$ that cover $K_{\epsilon}$.  For each patch $U_j$, we may pick a list of paths $\gamma_1^j,\dots,\gamma_n^j$ in the underlying topological marked surface and define the ``Euclidean metric" $d_{Euc \ U_j}$ on this patch to be the Euclidean distance on $(\mathbb{R}^2)^n$ pulled back to $U_j$ via the identification of $M$ with $\left(\textrm{hol}(\gamma_1^j),\dots,\textrm{hol}_M(\gamma_n^j) \right)$, where $\textrm{hol}_M(\gamma^j_i) \in \mathbb{R}^2$ is the $x$- and $y$-components of the path $\gamma_i$ in the surface $M$, i.e. $\textrm{hol}_M(\gamma^j_i) =(\int_{\gamma^j_i} dx,\int_{\gamma^j_i} dy)$ in $M$.

 We first define a metric $D_{\mathcal{K}_\epsilon}$ on $\mathcal{K}_\epsilon$ by using the infimum of the lengths (with respect to any combination of the Euclidean metrics on patches) of all rectifiable paths connecting two points.  Since $\mathcal{K}_\epsilon$ is compact there is a minimal $c>0$ so that for every $M \in \mathcal{K}_\epsilon$ a $c$-neighborhood of $M$ in $\mathcal{K}_{\epsilon}$  with respect to the metric $D_{\mathcal{K}_\epsilon}$  is contained within a single patch.  For each $M \in \mathcal{K}_{\epsilon}$, define $U(M) \in \{U_j\}_j$ to be such a patch, and $d_{Euc  \ U(M)}$ to be the Euclidean metric on the patch $U(M)$.

 Hence, for any $M \in \mathcal{K}_{\epsilon}$ and any $A \in SL_2(\mathbb{R})$ such that $AM$ is in the $D_{\mathcal{K}_{\epsilon}}$ $c$-ball centered at $M$, 
 \begin{equation} \label{eq:DK}
 D_{\mathcal{K}_{\epsilon}}(M,AM) \leq \min \{ d_{Euc \  U(M)}(M, AM), d_{Euc \  U(AM)}(M, AM)\}\end{equation}
 For such an $A$, $d_{Euc \  U(M)}(M, AM)$ is precisely the Euclidean distance in $(\mathbb{R}^2)^n$ between $$\left(\textrm{hol}_M(\gamma_1^{U(M)}),\dots,\textrm{hol}_M(\gamma^{U(M)}_n) \right)$$ and $$\left( A\cdot \textrm{hol}_M(\gamma_1^{U(M)}),\dots,A \cdot \textrm{hol}_M(\gamma_n^{U(M)}) \right).$$  Thus, $d_{Euc \  U(M)}(M, AM)$ equals the Euclidean norm of the vector $$\left((A-Id)\cdot \textrm{hol}_M(\gamma_1^{U(M)}),\dots, (A-Id) \cdot \textrm{hol}_M(\gamma_n^{U(M)}) \right),$$   which is less than or equal to the product 
  $$d_{op}(A,Id) \cdot  \left| \textrm{hol}_M(\gamma_1^{U(M)}),\dots,\textrm{hol}_M(\gamma_n^{U(M)}) \right|$$ 
Similarly, $d_{Euc \  U(M)}(M, AM)$ is less than or equal to 
 $$d_{op}(A,Id) \cdot  \left| \textrm{hol}_M(\gamma_1^{U(AM)}),\dots,\textrm{hol}_M(\gamma_n^{U(AM)}) \right|$$ 
 Hence, from inequality (\ref{eq:DK}), we have
 \begin{multline} \label{eq:DK2} 
 D_{\mathcal{K}_{\epsilon}} (M, AM) \leq  \\
 d_{op}(A, Id) \cdot \min \{ \left| \textrm{hol}_M(\gamma_1^{U(M)}),\dots,\textrm{hol}_M(\gamma_n^{U(M)}) \right| ,  \left| \textrm{hol}_M(\gamma_1^{U(AM)}),\dots,\textrm{hol}_M(\gamma_n^{U(AM)}) \right| \} \end{multline}

We now define a scaled version $d_{\mathcal{K}_{\epsilon}}$ of the metric $D_{\mathcal{K}_{\epsilon}}$.  By Lemma 9.12 of \cite{EinsiedlerWard}, there is a neighborhood $V$ of $Id$ in $SL_2(\mathbb{R})$ on which $d_{SL_2\mathbb{R}}$ and $d_{op}$ are Lipschitz equivalent.  Hence there exists $\eta > 0$ such that $d_{op}(A,Id) < \eta \cdot d_{SL_2\mathbb{R}}$ for $A \in V$.  Now let
$$\rho = \sup_j \sup_{N \in U_j} \{ |\textrm{hol}_N(\gamma_1^{U_j}), \dots, \textrm{hol}_N(\gamma_n^{U_j})| \}.$$  By compactness of $\mathcal{K}_{\epsilon}$ and continuity of $\textrm{hol}$, $\rho$ is finite.  So define the metric $d_{\mathcal{K}_{\epsilon}}$ on $\mathcal{K}_{\epsilon}$ by 
$$d_{\mathcal{K}_{\epsilon}} = \frac{\eta}{\rho} D_{\mathcal{K}_{\epsilon}}.$$ Then from equation (\ref{eq:DK2}), 
for any $M \in \mathcal{K}_{\epsilon}$ and any $A \in V$ such that $AM$ is in the $D_{\mathcal{K}_{\epsilon}}$ $c$-ball centered at $M$, 
$$d_{\mathcal{K}_{\epsilon}}(A,AM) \leq \eta \cdot  d_{op}(A,Id) \leq d_{SL_2\mathbb{R}}(A,Id).$$

For each $M \in \mathcal{K}_{\epsilon}$ there exists a real number $r(M)>0$ such $d_{SL_2\mathbb{R}}(A,Id) < r(M)$ implies $AM$ is in the $D_{\mathcal{K}_{\epsilon}}$ $c$-ball centered at $M$.  Now, by compactness, we may fix $r_0>0$ such that $r_0 < r(M)$ for all $M \in \mathcal{K}_{\epsilon}$ and the $r_0$-ball about $Id$ with respect to $d_{SL_2\mathbb{R}}$ is contained in $V$.  

\end{proof}

\begin{definition}
A subset $B$ of $\mathcal{K}_{\epsilon}$ is {\bf $\delta$-dense} in $K_{\epsilon}$ if for every $M\in \mathcal{K}_{\epsilon}$ there exists $M'\in B \cap \mathcal{K}_{\epsilon}$ such  that $M'$ is in the same patch as $M$ and in the local metric from period coordinates their distance is less than $\delta$.
\end{definition}

\subsection{Step 2}

 \begin{definition} We say a surface $M$ is {\bf $(L,\epsilon)$-nice for $h_s$} if $$\bigcup_{s=0}^Lh_s \cdot M$$ is $\epsilon$-dense in 
 $$\overline{SL_2(\mathbb{R}) \cdot M} \cap K_{\epsilon}.$$
 We define $(L,\epsilon)$-nice for $\hat{h}_s$ similarly.  
 \end{definition}

\noindent We will denote by $U_{L,\epsilon}$ the set of surfaces in $\mathcal{K}_{\epsilon}$ that are that are $(L,\epsilon)$-nice for $h_s$.
 
 \begin{proposition}
  \label{p:everythingGetsCloseEventually} Let $\mu$ be an $SL_2(\mathbb{R})$-invariant, ergodic, Borel, probability measure on a stratum $\mathcal{H}$.  
  Then for any sufficiently small $\epsilon>0$, 
 $$\underset{L \to \infty}{\lim}\, \mu(\{M \in \mathcal{H}: (L,\epsilon)\text{-nice for }h_s\})=1.$$  
 \end{proposition}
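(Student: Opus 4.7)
The plan is to derive the proposition from the $H$-ergodicity of $\mu$ combined with a compactness argument on the support of $\mu$. Set $\mathcal{M}_1:=\mathrm{supp}(\mu)$, an affine invariant submanifold (Theorem \ref{t:affineInvariantSubmfld}). The hypothesis says $\mu$ is $SL_2(\mathbb{R})$-ergodic; the Mautner phenomenon stated in \S\ref{ss:dynamicalPreliminaries}, applied to the unitary representation of $SL_2(\mathbb{R})$ on $L^2(\mathcal{M}_1,\mu)$, upgrades this to $H$-ergodicity of $\mu$.

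Choose $\epsilon>0$ small enough that every ball of radius $\epsilon$ in the local period-coordinate metric on the compact set $\mathcal{K}_\epsilon$ lies inside a single coordinate patch. The intersection $\mathcal{K}_\epsilon\cap\mathcal{M}_1$ is a closed subset of the compact set $\mathcal{K}_\epsilon$, hence compact, so one can pick finitely many points $x_1,\dots,x_N\in\mathcal{K}_\epsilon\cap\mathcal{M}_1$ whose $\epsilon/2$-balls $B_i$ in the local metric cover $\mathcal{K}_\epsilon\cap\mathcal{M}_1$. Each $B_i$ has positive $\mu$-measure, because its center lies in the topological support of $\mu$. By the pointwise ergodic theorem for the $H$-action on $(\mathcal{M}_1,\mu)$ applied to a nonnegative continuous bump function supported in $B_i$ with positive $\mu$-integral, for $\mu$-a.e.\ $M$ the forward $H$-orbit $\{h_s M : s\geq 0\}$ visits every $B_i$. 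On the corresponding full-measure set $\Omega$, the quantity
$$L(M):=\max_{1\le i\le N}\inf\{s\ge 0 : h_s\cdot M\in B_i\}$$
is finite.

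For $M\in\Omega$ and any $y\in\mathcal{K}_\epsilon\cap\mathcal{M}_1$, pick $i$ with $y\in B_i$ and $s\in[0,L(M)]$ with $h_s M\in B_i$; the triangle inequality in the local metric yields $d(h_s M,y)<\epsilon$, so $\bigcup_{s=0}^{L(M)} h_s\cdot M$ is $\epsilon$-dense in $\mathcal{K}_\epsilon\cap\mathcal{M}_1$. Moreover, the $H$-orbit of such an $M$ is dense in $\mathcal{M}_1$, so the $SL_2(\mathbb{R})$-orbit closure of $M$, which contains the $H$-orbit closure and is itself contained in the $SL_2(\mathbb{R})$-invariant closed set $\mathcal{M}_1$, must equal $\mathcal{M}_1$. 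Hence such an $M$ is $(L(M),\epsilon)$-nice, and monotone convergence applied to the nested sets $\{M\in\Omega : L(M)\le L\}$ as $L\to\infty$ gives
$$\lim_{L\to\infty}\mu\bigl(\{M : M\text{ is }(L,\epsilon)\text{-nice for }h_s\}\bigr)=\mu(\Omega)=1.$$

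The only nontrivial ingredient is the upgrade from $SL_2(\mathbb{R})$-ergodicity to $H$-ergodicity via Mautner; everything else is a routine compactness-plus-Birkhoff argument. The phrase ``sufficiently small $\epsilon$'' enters only to ensure that local-metric $\epsilon$-balls genuinely lie inside coordinate patches, so that the definition of $\epsilon$-density is applicable without modification.
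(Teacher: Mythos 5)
Your proposal is correct and follows essentially the same route as the paper: Mautner to get $H$-ergodicity, a finite cover of $\mathcal{K}_\epsilon\cap\mathrm{supp}(\mu)$ by $\epsilon/2$-balls of positive measure, ergodicity to show that a.e.\ point's $H$-orbit visits every ball in finite time, and then a limit/monotonicity step. (The paper phrases the last part as $G=\bigcap_j\bigcup_{s\ge 0}h_{-s}B_j$ having full measure rather than via Birkhoff and your $L(M)$, but these are the same idea; also, your digression arguing $\overline{SL_2(\mathbb{R})\cdot M}=\mathcal{M}_1$ is unnecessary, since $\overline{SL_2(\mathbb{R})\cdot M}\subset\mathcal{M}_1$ already makes $\epsilon$-density in $\mathcal{M}_1\cap\mathcal{K}_\epsilon$ imply the required $\epsilon$-density in $\overline{SL_2(\mathbb{R})\cdot M}\cap\mathcal{K}_\epsilon$.)
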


 Proposition   \ref{p:everythingGetsCloseEventually} is a well-known result whose proof is included for the convenience of the reader.
 \begin{proof}
 Let $\mathcal{M}$ denote the support of $\mu$ and fix $\epsilon>0$.  We will show that for $\mu$-almost every $M \in \mathcal{M}$, $H \cdot M$ is $\epsilon$-dense in $\mathcal{M} \cap K_{\epsilon}$
 
 The Mautner phenomenon implies that since $\mu$ is ergodic with respect to the action of $SL_2(\mathbb{R})$, it is also ergodic with respect to the horocycle flow.  Hence, any $W \subset \mathcal{H}$ with positive $\mu$-measure,  has that $H \cdot W$ has full measure.  Cover $K_{\epsilon} \cap \mathcal{M}$ by a collection of balls of radius $\frac \epsilon 2$.  Since $\mathcal{M}$ is the support of $\mu$, each of these $\frac \epsilon 2$-balls has positive $\mu$ measure. 
 
By the compactness of $K_{\epsilon}$, we may take a finite, nonempty, subcover of $K_{\epsilon} \cap \mathcal{M}$ consisting of a finite subset of these $\frac \epsilon 2$-balls $B_1,\dots,B_m$.  Let $$G = \bigcap_{j=1}^m \bigcup_{s \in \mathbb{R}^+} h_{-s} \cdot B_j.$$  As a finite intersection of sets of full $\mu$-measure, $G$ has full $\mu$-measure. So for every $\epsilon>0$ there exists $L$ so that 
$$\mu(\bigcap_{j=1}^m \bigcup_{s \in [0,L]} h_{-s} \cdot B_j)>1-\epsilon$$ and by construction the hororcyle orbit of each point in $G$ passes within distance $< \epsilon$ of every point in $K_{\epsilon}$.  
  \end{proof}
 
 \begin{corollary} \label{c:U}
 Let $\mu$ be an $SL_2(\mathbb{R})$-invariant, ergodic, Borel, probability measure on a stratum $\mathcal{H}_1$, and let $\epsilon > 0$.  Denote by $\mathcal{M}$ the support of $\mu$.  Then there exists a nonempty open set $U \subset \mathcal{H}_1$ and a real number $L >0$ such that $M \in U$ implies $$\bigcup_{s \in [0,L]} h_s \cdot M$$ is $\epsilon$-dense in $\mathcal{M} \cap K_{\epsilon}$.   Moreover, the set $U$ may be chosen so that $\mu(U)>1-\epsilon$.   \end{corollary}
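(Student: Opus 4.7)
The plan is to upgrade the proof of Proposition \ref{p:everythingGetsCloseEventually} by being careful that the set of ``good'' surfaces exhibited there can be taken to be open. Cover the compact set $\mathcal{M} \cap \mathcal{K}_\epsilon$ by finitely many open balls $B_1, \ldots, B_m$ of radius $\epsilon/2$ centered at points of $\mathcal{M} \cap \mathcal{K}_\epsilon$, chosen small enough that each $B_j$ lies in a single period-coordinate patch (so the local metric matches the one underlying the definition of $\delta$-density) and lies in the thick part of the stratum. Each $B_j$ is open in $\mathcal{H}_1$ and has positive $\mu$-measure, since it meets the support $\mathcal{M}$ of $\mu$ in a nonempty relatively open set.

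By the Mautner phenomenon, $\mu$ is ergodic for $H$, so $\bigcup_{s>0} h_{-s} B_j$ has full $\mu$-measure for each $j$. By continuity of measure from below, choose $L>0$ large enough that
$$\mu\Bigl(\bigcup_{s \in [0,L]} h_{-s} B_j\Bigr) > 1 - \epsilon/m \quad\text{for every } j=1,\ldots,m,$$
and set
$$U := \bigcap_{j=1}^m \bigcup_{s \in [0,L]} h_{-s} B_j.$$
Then $\mu(U) > 1-\epsilon$, so in particular $U$ is nonempty. The set $U$ is open: each $h_{-s}$ is a homeomorphism of $\mathcal{H}_1$, so each $h_{-s} B_j$ is open, and $U$ is a finite intersection of unions of open sets.

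Finally, for any $M \in U$ and any target $x \in \mathcal{M} \cap \mathcal{K}_\epsilon$, pick $j$ with $x \in B_j$; the membership $M \in \bigcup_{s \in [0,L]} h_{-s} B_j$ produces $s \in [0,L]$ with $h_s \cdot M \in B_j$, so $h_s \cdot M$ is within $\epsilon$ of $x$. I do not foresee a genuine obstacle here; the only technicality worth attention is ensuring that the witnesses $h_s \cdot M$ themselves lie in $\mathcal{K}_\epsilon$, as the formal definition of $\delta$-dense requires, which is taken care of by arranging the balls $B_j$ to sit inside the thick part of the stratum.
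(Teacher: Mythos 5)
Your proof is correct. It does take a slightly different route from the paper's own one-line proof sketch: the paper applies Proposition~\ref{p:everythingGetsCloseEventually} as a black box to get a set $V$ of $(L,\epsilon/2)$-nice surfaces with $\mu(V)\geq 1-\epsilon$, and then appeals to equicontinuity of $\{h_s : s\in[0,L]\}$ to fatten $V$ to an open set $U$ on which the weaker $(L,\epsilon)$-nice property still holds. You instead inline the proof of the Proposition and observe that the set $\bigcap_{j}\bigcup_{s\in[0,L]}h_{-s}B_j$ it produces is \emph{already} open, since each $h_{-s}B_j$ is open and a union of open sets is open; this sidesteps the equicontinuity step entirely and gives the measure bound $\mu(U)>1-\epsilon$ directly rather than through a second application of the Proposition with a smaller parameter. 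Both arguments are sound; yours is arguably cleaner because it makes the openness structural rather than relying on a (not fully spelled out) equicontinuity estimate, while the paper's version has the advantage of reusing Proposition~\ref{p:everythingGetsCloseEventually} as a lemma. Your flagged technicality about the witnesses $h_s\cdot M$ landing in $\mathcal{K}_\epsilon$, to match the paper's formal definition of $\delta$-dense, is real but also present in the paper's own proof of Proposition~\ref{p:everythingGetsCloseEventually}; it is resolved exactly as you say, by shrinking the balls $B_j$ so they remain in the thick part.
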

 
 \begin{proof}[Proof sketch]
 Apply Proposition  \ref{p:everythingGetsCloseEventually} to obtain a real number $L>0$ so that the set 
 $$V:=  \left \{M \in \mathcal{M}: M \textrm{ is } (L,\frac{\epsilon}2)\text{-nice for }h_s \right \} $$ has $\mu$ measure at least $1-\epsilon .$  
 By the  equicontinuity of the $h_s$  for all $0\leq s \leq M$, some open set $U \supset V$ has 
 the property that for every $M \in U$, $$\bigcup_{s \in [0,L]} h_s \cdot M$$ is $\epsilon$-dense in $\mathcal{M} \cap K_{\epsilon}$.  
 \end{proof}

 
 \subsection{Step 3}

 \begin{proposition}\label{prop:seg hit}
 Let $\mathcal{M}$ be an affine invariant submanifold of $\mathcal{H}$ with associated $SL_2(\mathbb{R})$-invariant ergodic Borel probability measure $\mu$  and let $U$ be an open set in $\mathcal{H}$ such that $\mu(U) \geq 1-\frac{\epsilon}{2}$.  Then for every $M$ so that $\overline{SL_2(\mathbb{R})M}= \mathcal{M}$ there exist arbitrarily large $t \in \mathbb{R}$ such that $$\lambda(\{\theta: g_tr_{\theta} \cdot M\in U\})>(1-\epsilon),$$
 where $\lambda$ is Lebesgue measure (with total mass $1$) on $S^1$.  
 \end{proposition}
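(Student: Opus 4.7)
The plan is to deduce Proposition \ref{prop:seg hit} directly from the Eskin--Mirzakhani--Mohammadi equidistribution theorem (Theorem \ref{thm:circle hits}) by combining it with a standard approximation of the open set $U$ by a continuous bump function.

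First I would produce a test function to feed into Theorem \ref{thm:circle hits}. Let $V = U \cap \mathcal{M}$; since $\mu$ is supported on $\mathcal{M}$ we still have $\mu(V) \geq 1 - \epsilon/2$. Using inner regularity of the Radon measure $\mu$, choose a compact set $K \subset V$ with $\mu(K) \geq 1 - 5\epsilon/8$, and then use Urysohn's lemma on the locally compact Hausdorff space $\mathcal{M}$ to produce $\phi \in C_c(\mathcal{M})$ with $0 \leq \phi \leq 1$, $\phi \equiv 1$ on $K$, and $\mathrm{supp}(\phi) \subset V \subset U$. In particular $\phi \leq \mathbf{1}_U$ on $\mathcal{M}$ and $\int_{\mathcal{M}} \phi \, d\mu \geq \mu(K) \geq 1 - 5\epsilon/8$.

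Next I would apply Theorem \ref{thm:circle hits} to this $\phi$ with error tolerance $\epsilon/8$: since $\overline{SL_2(\mathbb{R}) \cdot M} = \mathcal{M}$, there exists $T_0$ such that for all $T > T_0$,
\begin{equation*}
\frac{1}{T} \int_0^T \int_{S^1} \phi(g_t r_\theta \cdot M) \, d\lambda(\theta) \, dt \; \geq \; \int_{\mathcal{M}} \phi \, d\mu - \frac{\epsilon}{8} \; \geq \; 1 - \frac{3\epsilon}{4}.
\end{equation*}
Now I argue by contradiction. Suppose the conclusion of the proposition fails, so there exists $t_0$ with $\lambda(\{\theta : g_t r_\theta \cdot M \in U\}) \leq 1 - \epsilon$ for every $t \geq t_0$. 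Since $\phi \leq \mathbf{1}_U$, this gives $\int_{S^1} \phi(g_t r_\theta \cdot M) \, d\lambda(\theta) \leq 1 - \epsilon$ for all $t \geq t_0$, and the trivial bound $\phi \leq 1$ handles the interval $[0, t_0]$. Hence for every $T \geq t_0$,
\begin{equation*}
\frac{1}{T} \int_0^T \int_{S^1} \phi(g_t r_\theta \cdot M)\, d\lambda(\theta) \, dt \; \leq \; \frac{t_0}{T} + (1 - \epsilon) \cdot \frac{T - t_0}{T}.
\end{equation*}
Taking $T$ sufficiently large (larger than both $T_0$ and $8 t_0 / \epsilon$), the right-hand side is at most $1 - 7\epsilon/8$, contradicting the lower bound from the previous display. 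Therefore $\lambda(\{\theta : g_t r_\theta \cdot M \in U\}) > 1 - \epsilon$ must hold for arbitrarily large $t$.

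There is no serious obstacle here; the argument is a routine ``equidistribution implies generic behavior'' deduction. The only delicate point is bookkeeping the $\epsilon$-budget: the $\epsilon/2$ slack in the hypothesis $\mu(U) \geq 1 - \epsilon/2$ must absorb both the loss from replacing $\mathbf{1}_U$ by a compactly supported continuous $\phi \leq \mathbf{1}_U$, and the error term from Theorem \ref{thm:circle hits}. The splitting $5\epsilon/8 + \epsilon/8 < 7\epsilon/8 < \epsilon$ is chosen so that the final contradiction is strict.
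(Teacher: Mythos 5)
Your argument is correct and essentially matches the paper's: both construct a compactly supported continuous $\phi \le \mathbf{1}_U$ with $\int \phi\, d\mu$ close to $1$, feed it into Theorem \ref{thm:circle hits}, and extract the conclusion from the resulting lower bound on the time-averaged circle integral. The only cosmetic difference is that you finish by contradiction whereas the paper runs a Chebyshev-type estimate showing the set of good times $t \le T$ has measure at least $\frac{\epsilon}{8}T$; the latter gives slightly more (positive density of good times) but both suffice for the stated conclusion.
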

 
 \begin{proof} We will obtain this as a consequence of  \cite[Theorem 2.6]{EMM} (Theorem \ref{thm:circle hits} of this paper).  Indeed, choose $\phi\in C_C(\mathcal{M})$ so that $\phi$ is non-negative, supported on $U$, $||\phi||_{\infty}=1$ and $\int_{\mathcal{M}} \phi d\mu>1-\frac 3 4 \epsilon$.  By Theorem 2.6 there exist arbitrarily large $T$ so that  
 $$\frac 1 T\int_0^T\int_0^{2\pi} \phi(g_tr_{\theta}M) \ d\theta dt> \left(1-\frac 7 8 \epsilon \right).$$
 Since $$\int_0^{2\pi}\phi(g_tr_{\theta}M)d\theta\leq 1$$ for all $t \in \mathbb{R}$, it follows that 
 $$\left| \left \{t\leq T:\int_0^{2\pi} \phi(g_tr_{\theta}M)d\theta dt>1- \epsilon \right \}\right|\geq \frac{1}{8} \epsilon T.$$ For any $t$ in this set, by our choice of $\phi$ we have that $$\lambda \left(\{\theta: g_tr_{\theta}M \in U\}\right)>1-\epsilon,$$ and the proposition follows. 
 \end{proof}

 \subsection{Step 4}

The next lemma shows that under $g_t$ arcs of ``circles" (in $\theta$) track horocycles. This is used multiple times in our argument to show that arcs of ``circles" pushed by $g_t$ are often dense or even approximately uniformly distributed. This is well known.

\begin{lemma}\label{lem:track hor} For any $\epsilon>0$ there exist real numbers $t_0 >0$ and $\psi_0$ such that  if $g_tr_{\psi}M\in \mathcal{K}_{2\epsilon}$, then 
$$d_{\mathcal{K}_\epsilon}(g_tr_{\psi}M,h_{-e^{2t}\tan(\psi)}g_tM)<\epsilon$$
for all $t>t_0$ and $|\psi|<\psi_0$.
\end{lemma}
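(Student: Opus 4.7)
The strategy is to use the Bruhat ($\overline{N}AN$) decomposition of $r_\psi$ from Lemma \ref{lem:decomp NAN} to write down an explicit element $C = C(t,\psi) \in SL_2(\mathbb{R})$ that relates $g_t r_\psi$ to $h_{-e^{2t}\tan\psi}g_t$, and then to verify that $C$ tends to the identity as $t \to \infty$ and $\psi \to 0$, so that Lemma \ref{lem:less sl2} delivers the conclusion.

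The core computation is a direct matrix multiplication. Using Lemma \ref{lem:decomp NAN} together with the conjugation identity $g_t \hat{h}_s g_{-t} = \hat{h}_{se^{-2t}}$ (a one-line calculation), I would show
$$C := (g_t r_\psi)\,(h_{-e^{2t}\tan\psi}\,g_t)^{-1} = g_{\log\cos\psi}\,\hat{h}_{e^{-2t}\sin\psi\cos\psi}.$$
Both factors approach the identity as $t \to \infty$ and $|\psi| \to 0$: the diagonal factor because $\log\cos\psi \to 0$, and the $\hat{h}$-factor because $e^{-2t}\sin\psi\cos\psi \to 0$. Hence, given $\epsilon > 0$, we may pick $t_0$ and $\psi_0$ so that for all $t > t_0$ and $|\psi| < \psi_0$ we have $d_{SL_2\mathbb{R}}(C,Id) < \min(\epsilon,r_0)$ (with $r_0$ the constant from Lemma \ref{lem:less sl2}), and additionally the operator norms of both $C$ and $C^{-1}$ on $\mathbb{R}^2$ are at most $2$.

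Under the hypothesis $g_t r_\psi M \in \mathcal{K}_{2\epsilon}$, note that $h_{-e^{2t}\tan\psi}g_t M = C^{-1}(g_t r_\psi M)$. Since $C^{-1}$ changes the length of any holonomy vector by at most a factor of $2$, every saddle connection on $h_{-e^{2t}\tan\psi}g_t M$ has length at least $\epsilon$, so this surface also belongs to $\mathcal{K}_\epsilon$. Applying Lemma \ref{lem:less sl2} with $N = h_{-e^{2t}\tan\psi}g_t M \in \mathcal{K}_\epsilon$ and matrix $C$ then yields
$$d_{\mathcal{K}_\epsilon}(g_t r_\psi M,\, h_{-e^{2t}\tan\psi}g_t M) = d_{\mathcal{K}_\epsilon}(C\cdot N,\, N) \le d_{SL_2\mathbb{R}}(C,Id) < \epsilon.$$

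The only delicate point is confirming that $h_{-e^{2t}\tan\psi}g_t M$ lies in $\mathcal{K}_\epsilon$ (so that the local metric is defined and Lemma \ref{lem:less sl2} applies); this is precisely why the hypothesis is stated with $\mathcal{K}_{2\epsilon}$ rather than $\mathcal{K}_\epsilon$, and is handled by the near-isometric action of $C^{-1}$ on holonomy for the chosen parameter range. Beyond that, the argument is a bookkeeping exercise in the $\overline{N}AN$ decomposition, combined with the already-established comparison between $d_{\mathcal{K}_\epsilon}$ and $d_{SL_2\mathbb{R}}$.
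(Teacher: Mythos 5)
Your argument is correct and follows essentially the same route as the paper: both factor $g_t r_\psi = C\,(h_{-e^{2t}\tan\psi}g_t)$ with $C=\hat{h}_{e^{-2t}\tan\psi}g_{\log\cos\psi}\to\mathrm{Id}$, coming from the $\overline{N}AN$ decomposition and the $g_t$-conjugation identity, and then invoke Lemma~\ref{lem:less sl2}. Your write-up is actually a bit more careful than the paper's terse version, particularly in making explicit why $h_{-e^{2t}\tan\psi}g_tM$ lands in $\mathcal{K}_\epsilon$ so that the local metric applies.
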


 \begin{proof}Recall $g_tr_\theta=\hat{h}_{e^{-2t}\tan(\theta)}g_{\log(\cos(\theta))}h_{-e^{2t}\tan(\theta)}g_t$. By Lemma \ref{lem:less sl2}, if $\theta$ is small enough then and $t$ is large enough then for any $M \in \mathcal{K}_{2\epsilon}$ we have $g_tr_{\psi}M\in \mathcal{K}_\epsilon$ and $$d_{\mathcal{K}_\epsilon}(\hat{h}_{e^{-2t}\tan(\theta)}g_{\log(\cos(\theta))}M,M)<\epsilon.$$
 \end{proof}

 \begin{proposition}\label{prop:dense seg} For all sufficiently small $\epsilon>0$, $y\geq 2\pi\epsilon$, $L \in \mathbb{R}^+$ there exists $b_{\epsilon,L,\mathcal{M}}$ so that if $t>b_{\epsilon,L,\mathcal{M}}$ and 
\begin{equation} \label{lem:t good}\lambda(\{\theta: g_tr_\theta M \in U_{L,\epsilon}\})>1-\epsilon,
\end{equation}
then
\begin{equation} \bigcup_{\theta\in (a-y,a+y)} g_tr_\theta M
\end{equation}
is $2\epsilon$-dense in $\mathcal{K}_{\epsilon}$
for all $a \in S^1$.
\end{proposition}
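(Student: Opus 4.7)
The plan is to combine the measure hypothesis, which guarantees an ``anchor'' angle $a'\in (a-y,a+y)$ with $g_tr_{a'}M\in U_{L,\epsilon}$, with Lemma \ref{lem:track hor}, which says that as $\phi$ varies in a small window the curve $\phi\mapsto g_tr_{a'+\phi}M$ shadows a horocycle orbit of $N:=g_tr_{a'}M$. By the definition of $U_{L,\epsilon}$, this horocycle orbit is already $\epsilon$-dense in $\mathcal{M}\cap \mathcal{K}_\epsilon$, so the shadowing produces the desired $2\epsilon$-density.

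Concretely, I would carry out the following steps. First, note that the normalized Lebesgue measure of $(a-y,a+y)$ is at least $y/\pi\geq 2\epsilon$, while the bad set $\{\theta:g_tr_\theta M\notin U_{L,\epsilon}\}$ has $\lambda$-measure less than $\epsilon$; hence the good angles inside $(a-y,a+y)$ have $\lambda$-measure at least $\epsilon$. Choose $b_{\epsilon,L,\mathcal{M}}$ large enough that $\delta:=\arctan(Le^{-2t})$ is much smaller than $y$ (using $y\geq 2\pi\epsilon$); then we may still select an anchor $a'\in (a-y+\delta,a+y-\delta)$ with $N:=g_tr_{a'}M\in U_{L,\epsilon}$. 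Second, given any target $M^*\in \mathcal{M}\cap \mathcal{K}_\epsilon$, use $N\in U_{L,\epsilon}$ to pick $s^*\in[0,L]$ with $d_{\mathcal{K}_\epsilon}(h_{s^*}N,M^*)<\epsilon$, and set $\phi^*\in[-\delta,0]$ so that $-e^{2t}\tan\phi^*=s^*$; note that $a'+\phi^*\in(a-y,a+y)$. Third, apply Lemma \ref{lem:track hor} with base surface $r_{a'}M$ and $\psi=\phi^*$ to obtain $d_{\mathcal{K}_\epsilon}(g_tr_{a'+\phi^*}M,\,h_{s^*}N)<\epsilon$; the triangle inequality then yields $d_{\mathcal{K}_\epsilon}(g_tr_{a'+\phi^*}M,\,M^*)<2\epsilon$, which is the required $2\epsilon$-density.

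The main obstacle is verifying the thickness hypothesis ``$g_tr_{\psi}M\in \mathcal{K}_{2\epsilon}$'' required by Lemma \ref{lem:track hor}. To handle it, observe that since $M^*\in \mathcal{K}_\epsilon$ and $h_{s^*}N$ is $\epsilon$-close to it, $h_{s^*}N$ sits in a slightly thicker sublevel set; moreover, the factorization $g_tr_\theta=\hat{h}_{e^{-2t}\tan\theta}g_{\log\cos\theta}h_{-e^{2t}\tan\theta}g_t$ (from the proof of Lemma \ref{lem:track hor}) expresses $g_tr_{a'+\phi^*}M$ as the image of $h_{s^*}N$ under $\hat{h}_{e^{-2t}\tan\phi^*}g_{\log\cos\phi^*}$. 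The latter factor tends to the identity in $SL_2(\mathbb{R})$ as $t\to\infty$ because $|\phi^*|\leq \delta\to 0$, so by Lemma \ref{lem:less sl2} the perturbed surface $g_tr_{a'+\phi^*}M$ remains in $\mathcal{K}_{2\epsilon}$ once $b_{\epsilon,L,\mathcal{M}}$ is taken large enough. Choosing $b_{\epsilon,L,\mathcal{M}}$ to simultaneously absorb the thresholds $t_0,\psi_0$ appearing in Lemma \ref{lem:track hor} and the ``margin'' $\delta$ used above will complete the argument.
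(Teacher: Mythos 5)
Your strategy is the same as the paper's: locate an anchor angle inside the interval whose $g_t$-image lies in $U_{L,\epsilon}$, use $(L,\epsilon)$-niceness to get an $\epsilon$-dense horocycle segment, and use Lemma~\ref{lem:track hor} to show that nearby angles along the circle shadow that horocycle segment, closing with the triangle inequality. The paper picks the anchor in the half-interval $(a-y/2,a+y/2)$ while you shave $\delta$ off each end; both work.

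The one step that does not go through as written is your verification of the thickness hypothesis. You assert that, because $h_{s^*}N$ is $\epsilon$-close to $M^*\in\mathcal{K}_\epsilon$, it ``sits in a slightly thicker sublevel set'' and then that Lemma~\ref{lem:less sl2} lets you conclude $g_tr_{a'+\phi^*}M\in\mathcal{K}_{2\epsilon}$. Both claims are backwards. Being close to a surface with no saddle connection shorter than $\epsilon$ only guarantees that $h_{s^*}N$ has no saddle connection shorter than roughly $\epsilon - c\epsilon$ for some constant $c$; that is a \emph{thinner} thick part (a larger set), not a thicker one, so there is no reason $h_{s^*}N$ or its small $SL_2(\mathbb{R})$-perturbation should lie in the smaller set $\mathcal{K}_{2\epsilon}$. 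Moreover, Lemma~\ref{lem:less sl2} only bounds the $d_{\mathcal{K}_\epsilon}$-displacement produced by a matrix near the identity; it does not assert that the image stays in any prescribed thick part. What saves the argument is the weaker fact that the points involved do stay in \emph{some} thick part $\mathcal{K}_{\epsilon'}$ with $\epsilon'$ comparable to $\epsilon$, which is enough to apply the tracking estimate once the thickness parameter in Lemma~\ref{lem:track hor} is adjusted (effectively running the argument with $\epsilon$ replaced by a smaller constant from the outset). The paper's own proof elides this bookkeeping, so your gap is inherited rather than created, but the explicit justification you offer is incorrect as stated and should be reworked along these lines.
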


\begin{proof} Because $$\lambda \left((a-\frac y 2,a+ \frac y 2)\right)\geq \epsilon$$ there exists $\phi \in (a-\frac y 2,a+\frac y 2 )$ so that $g_tr_\phi M \in U_{L,\epsilon}$.
 By the definition of $U_{L,\epsilon}$ we have that 
 $$\bigcup_{s\in [0,L]}h_sg_tr_\phi M$$ is $\epsilon$-dense in $\mathcal{K}_{\epsilon}$.
  By Lemma \ref{lem:less sl2} and Lemma \ref{lem:track hor}, if $t$ is large enough depending on $L$ and $s \in [0,L]$ there exists $\theta_{s,t}\in (-3e^{-2t},3e^{-2t})$ so that 
  $$d_{\mathcal{K}_\epsilon}(h_sg_tr_\phi M,g_tr_{\theta_{(s,t}+\phi)}M)<\epsilon.$$
  For sufficiently large values of $t$, $3e^{-2t}<\frac y 2$ and $\phi+\theta_{s,t}\in (a-y,a+y)$. The proposition follows.
\end{proof}

 \begin{proposition}\label{prop:seg hit2}
 Let $\mathcal{M}$ be an affine invariant submanifold in $\mathcal{H}$.  Then for any $\epsilon>0$, $M \in \mathcal{M}$, and $a \in S^1$, there exists arbitrarily large $t \in \mathbb{R}$ such that the set $$\bigcup_{\theta \in (a-2\pi \epsilon, a + 2 \pi \epsilon)} g_t r_{\theta} \cdot M$$ is $\epsilon$-dense in $\mathcal{M} \cap K_{\epsilon}$.  
 \end{proposition}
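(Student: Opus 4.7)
The plan is to chain together three preceding ingredients---Corollary \ref{c:U}, Proposition \ref{prop:seg hit}, and Proposition \ref{prop:dense seg}---via a single auxiliary parameter $\epsilon_0 > 0$ to be taken small enough relative to $\epsilon$ at the end. Before starting, without loss of generality I would replace $\mathcal{M}$ by $\overline{SL_2(\mathbb{R}) \cdot M}$ if necessary; by Theorem \ref{t:affineInvariantSubmfld} this is still an affine invariant submanifold, and it ensures the hypothesis $\overline{SL_2(\mathbb{R}) \cdot M} = \mathcal{M}$ required by Proposition \ref{prop:seg hit}. Let $\mu$ denote the associated ergodic $SL_2(\mathbb{R})$-invariant probability measure on $\mathcal{M}$.

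First, I would apply Corollary \ref{c:U} (or a minor refinement of it allowing the measure threshold and the niceness parameter to be chosen separately, which is immediate from Proposition \ref{p:everythingGetsCloseEventually}) to obtain an open set $U \subset \mathcal{H}_1$ and a real $L > 0$ with $\mu(U) > 1 - \epsilon_0/2$ and $U \subset U_{L, \epsilon_0}$. Second, I would feed this $U$ into Proposition \ref{prop:seg hit}; since $\mu(U) \geq 1 - \epsilon_0/2$, this produces arbitrarily large $t$ with
\[
\lambda\left(\left\{\theta : g_t r_\theta M \in U_{L, \epsilon_0}\right\}\right) \;\geq\; \lambda\left(\left\{\theta : g_t r_\theta M \in U\right\}\right) \;>\; 1 - \epsilon_0.
\]
Third, for such $t$ that also exceed the threshold $b_{\epsilon_0, L, \mathcal{M}}$ from Proposition \ref{prop:dense seg}, I would invoke that proposition with $y = 2\pi\epsilon \geq 2\pi\epsilon_0$ to conclude that
\[
\bigcup_{\theta \in (a - 2\pi\epsilon, a + 2\pi\epsilon)} g_t r_\theta M
\]
is $2\epsilon_0$-dense in $\mathcal{K}_{\epsilon_0}$. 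Choosing $\epsilon_0 \leq \epsilon/2$ and using $\mathcal{M} \cap \mathcal{K}_\epsilon \subset \mathcal{K}_{\epsilon_0}$ upgrades this to the desired $\epsilon$-density in $\mathcal{M} \cap \mathcal{K}_\epsilon$, and since the $t$ produced by Proposition \ref{prop:seg hit} may be taken arbitrarily large, this completes the proof.

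The only real technical obstacle is the bookkeeping of constants: the measure bound output by Corollary \ref{c:U}, the measure hypothesis of Proposition \ref{prop:seg hit}, and the $\lambda$-measure hypothesis paired with $U_{L, \epsilon_0}$ in Proposition \ref{prop:dense seg} must all be made to align simultaneously, and the final $2\epsilon_0$-density in the thicker part $\mathcal{K}_{\epsilon_0}$ must pass to $\epsilon$-density in the subset $\mathcal{M} \cap \mathcal{K}_\epsilon$. A choice $\epsilon_0 = \epsilon/2$ (combined with the minor refinement of Corollary \ref{c:U} mentioned above, which decouples the measure bound from the niceness parameter) is enough to make every inequality line up; beyond this routine calibration, the proof is a direct chain of the three earlier results.
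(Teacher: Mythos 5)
Your proof is correct and takes essentially the same route as the paper's: it chains Proposition \ref{p:everythingGetsCloseEventually} (via Corollary \ref{c:U}) to get $L$ and a large-measure open set $U$ of $(L,\cdot)$-nice surfaces, feeds $U$ into Proposition \ref{prop:seg hit} to get arbitrarily large $t$ with most of the $g_t$-circle in $U$, then applies Proposition \ref{prop:dense seg}. Your use of the auxiliary parameter $\epsilon_0$ and the explicit WLOG replacement of $\mathcal{M}$ by $\overline{SL_2(\mathbb{R})\cdot M}$ simply spell out constant bookkeeping that the paper's own terse proof leaves implicit.
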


 \begin{proof}
By Proposition \ref{p:everythingGetsCloseEventually},  for every $\epsilon>0$ there exists $L>0$ so that $$\mu(U_{L,\frac \epsilon 2})>1-\frac \epsilon 2.$$  By Proposition \ref{prop:seg hit}, there exist arbitrarily large real numbers $t$ for which the assumptions of Proposition \ref{prop:dense seg} are satisfied. Proposition \ref{prop:seg hit2} follows.
 \end{proof}

  \subsection{Step 5}

  \begin{lemma}\label{lem:stable dense}For any $\epsilon>0$, $Y \in SL_2(\mathbb{R})$, $\mathcal{M}$ there exists $\delta_{Y,\epsilon}:=\delta>0$ so that if $S$ is a $\delta$-dense subset of $\mathcal{M}\cap \mathcal{K}_{\delta}$ then $Y \cdot S$ is $\epsilon$-dense in $\mathcal{M}\cap \mathcal{K}_{\epsilon}$.
 \end{lemma}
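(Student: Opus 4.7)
The key observation is that $Y$ acts \emph{linearly} in period coordinates: $\mathrm{hol}_{Y\cdot N}(\gamma) = Y\cdot\mathrm{hol}_N(\gamma)$ for every relative homology class $\gamma$. Two consequences follow immediately. First, since $Y$ sends saddle connections bijectively to saddle connections, scaling their lengths by a factor in $[\|Y^{-1}\|_{\mathrm{op}}^{-1},\|Y\|_{\mathrm{op}}]$, the map $Y^{-1}$ carries $\mathcal{K}_{\epsilon}$ into $\mathcal{K}_{\epsilon'}$ for some $\epsilon' = \epsilon'(\epsilon,Y) > 0$ (concretely, $\epsilon/\|Y\|_{\mathrm{op}}$ works). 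Second, in the Euclidean metric on any patch coming from period coordinates, $Y$ distorts distances by a factor bounded by $\|Y\|_{\mathrm{op}}$.

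\textbf{Choice of $\delta$ and the main step.} I would pick $\delta$ small enough that (a) $\delta \le \epsilon'$, so that the $\delta$-density hypothesis applies to points of $Y^{-1}(\mathcal{M}\cap\mathcal{K}_{\epsilon})\subset \mathcal{M}\cap\mathcal{K}_{\epsilon'}$; (b) any two points of $\mathcal{K}_{\epsilon'}$ at local period-coordinate distance $<\delta$ lie in a common patch, and similarly any two points of $\mathcal{K}_{\epsilon}$ at period-coordinate distance $<\|Y\|_{\mathrm{op}}\delta$ do (using the single-patch radius $c$ from the proof of Lemma \ref{lem:less sl2} at each scale); and (c) after absorbing the rescaling constants $\eta/\rho$ that appear in the definitions of $d_{\mathcal{K}_{\delta}}$ and $d_{\mathcal{K}_{\epsilon}}$, a displacement of size $\delta$ in the former blows up under $Y$ to a displacement of size $<\epsilon$ in the latter. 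Granted such a $\delta$, the argument is: for any $N\in\mathcal{M}\cap\mathcal{K}_{\epsilon}$ set $N' := Y^{-1}\cdot N\in\mathcal{M}\cap\mathcal{K}_{\epsilon'}\subseteq\mathcal{M}\cap\mathcal{K}_{\delta}$; the density hypothesis furnishes $s\in S$ sharing a patch with $N'$ at period-coordinate distance $<\delta$; then linearity of $Y$ in period coordinates, combined with (b) and (c), gives $d_{\mathcal{K}_{\epsilon}}(Y\cdot s, N)<\epsilon$, so $Y\cdot S$ is $\epsilon$-dense in $\mathcal{M}\cap\mathcal{K}_{\epsilon}$.

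\textbf{Main obstacle.} There is no conceptual depth here beyond continuity of $Y$ and the fact that $Y^{-1}$ maps thick parts into thick parts at a controlled cost. The only real care required is technical bookkeeping: the metrics $d_{\mathcal{K}_{\delta}}$ and $d_{\mathcal{K}_{\epsilon}}$ from Lemma \ref{lem:less sl2} involve scale-dependent rescaling constants $\eta/\rho$ and single-patch radii $c$, and the chosen $\delta$ must be compatible with all of them simultaneously. Compactness of $\mathcal{K}_{\epsilon'}$ and finiteness of the patch cover make this routine, but it is what prevents the proof from being a one-liner.
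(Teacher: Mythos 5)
Your argument is correct and follows essentially the same strategy as the paper's: pull back points or balls of $\mathcal{K}_{\epsilon}$ under $Y^{-1}$, observe that the images remain in a thick part $\mathcal{K}_{\epsilon'}$ and contain balls of definite size, and choose $\delta$ small enough that a $\delta$-dense set in $\mathcal{K}_{\delta}$ must meet them. The only cosmetic difference is that the paper packages the compactness via a finite $\epsilon/4$-net of $\mathcal{K}_{\epsilon}$ and reads off $\delta$ without writing down Lipschitz constants, whereas you invoke the operator norm of $Y$ explicitly; the content is the same.
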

 
\begin{proof}
 Let $\{p_1,...,p_n\}$ be a set of points in $\mathcal{K}_\epsilon$ so that if $\{q_1,...,q_n\}$ are a set of points with $d_{\mathcal{K}_\epsilon}(p_i,q_i)<\frac \epsilon 4$ then $\{q_1,...,q_n\}$ is $\frac \epsilon 2 $ dense in $\mathcal{K}_\epsilon$. Consider $Y^{-1}B(p_i,\frac \epsilon 4)$. There exists $c>0$ so that for each $i$ there is a ball of radius $c$ contained in  $Y^{-1}B(p_i,\frac \epsilon 4)$, moreover all of these balls are contained in $\mathcal{K}_{c'}$ for some $c'>0$. Let $\delta=\min\{c,c'\}$.
  \end{proof}
 
  \begin{proposition}\label{prop:hor dense} For every flat surface $M$, $\epsilon>0$, and $r>0$, there exist arbitrarily large $s \in \mathbb{R}$ so that for all $a$ 
 $$\bigcup_{\theta \in (a-r,a+r)}h_sr_{\theta}M$$ is $\epsilon$-dense in $\overline{SL(2,\mathbb{R}) \cdot M} \cap \mathcal{K}_{\epsilon}$.
 \end{proposition}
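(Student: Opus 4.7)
The plan is to bootstrap Proposition \ref{prop:seg hit2} via the Cartan decomposition. Writing $h_s = r_{\alpha(s)} g_{t(s)} r_{\beta(s)}$ with $r_{\alpha(s)}, r_{\beta(s)} \in K$ and $g_{t(s)} \in A$ (so $t(s) \to \infty$ as $s \to \infty$ by Lemma \ref{lem:decomp KAK}), we get
$$\bigcup_{\theta \in (a-r,\, a+r)} h_s r_\theta M \;=\; r_{\alpha(s)} \cdot \bigcup_{\phi \in (a+\beta(s)-r,\, a+\beta(s)+r)} g_{t(s)} r_\phi M.$$
So it suffices, for arbitrarily large $s$, to show that the $g_{t(s)}$-pushed arc of width $2r$ on the right is (up to controlling the outer rotation) $\epsilon$-dense in $\mathcal{M} \cap \mathcal{K}_\epsilon$, where $\mathcal{M} := \overline{SL_2(\mathbb{R}) \cdot M}$.

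Given $\epsilon > 0$, I would first upgrade Lemma \ref{lem:stable dense} to a version uniform over $K$: since rotations act isometrically on period coordinates (and hence preserve each $\mathcal{K}_\eta$) and $K$ is compact, the construction in the proof of Lemma \ref{lem:stable dense} yields a single $\delta > 0$ (depending only on $\epsilon$ and $\mathcal{M}$) such that any $\delta$-dense subset of $\mathcal{M} \cap \mathcal{K}_\delta$ becomes $\epsilon$-dense in $\mathcal{M} \cap \mathcal{K}_\epsilon$ under any $k \in K$. Set $\epsilon' := \min(\delta,\, r/(2\pi))$. By Proposition \ref{prop:seg hit2}, for every $a' \in S^1$ there exist arbitrarily large $t$ such that $\bigcup_{\phi \in (a'-2\pi\epsilon',\, a'+2\pi\epsilon')} g_t r_\phi M$ is $\epsilon'$-dense in $\mathcal{M} \cap \mathcal{K}_{\epsilon'}$, hence $\delta$-dense in $\mathcal{M} \cap \mathcal{K}_\delta$. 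Inspecting the proof, the good values of $t$ are those satisfying $\lambda(\{\theta : g_t r_\theta M \in U_{L,\epsilon'/2}\}) > 1 - \epsilon'/2$, a condition independent of $a'$; so a single $t$ works for every $a'$ simultaneously.

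Next, given any threshold $s_0$, I would pick $s > s_0$ large enough that $t(s)$ is one of the good values of $t$ (possible because $s \mapsto t(s)$ is continuous and tends to $\infty$). Choosing $a' := a + \beta(s)$ and using $2\pi\epsilon' \leq r$, the small arc $\bigcup_{\phi \in (a' - 2\pi\epsilon', a' + 2\pi\epsilon')} g_{t(s)} r_\phi M$ sits inside $\bigcup_{\phi \in (a+\beta(s)-r,\, a+\beta(s)+r)} g_{t(s)} r_\phi M$, so the latter is also $\delta$-dense in $\mathcal{M} \cap \mathcal{K}_\delta$. Applying the uniform Lemma \ref{lem:stable dense} to $Y = r_{\alpha(s)}$ then shows that
$$r_{\alpha(s)} \cdot \bigcup_{\phi \in (a+\beta(s)-r,\, a+\beta(s)+r)} g_{t(s)} r_\phi M \;=\; \bigcup_{\theta \in (a-r,\, a+r)} h_s r_\theta M$$
is $\epsilon$-dense in $\mathcal{M} \cap \mathcal{K}_\epsilon$, as required.

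The main obstacle should be making the $\delta$ from Lemma \ref{lem:stable dense} uniform as $Y$ ranges over $K$, together with the compatibility of the thick-part parameters $\epsilon'$ and $\delta$ when transferring density from $\mathcal{K}_{\epsilon'}$ to $\mathcal{K}_\delta$. Both should be routine consequences of the compactness of $K$ and the fact that each rotation $r_\alpha$ acts by an isometry preserving $\mathcal{K}_\eta$ for every $\eta$; the rest of the argument is essentially a packaging of Proposition \ref{prop:seg hit2} through the $\overline{N}AN$ picture.
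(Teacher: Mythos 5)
Your proof is correct and follows essentially the same route as the paper: use the Cartan decomposition $h_s = r_{\alpha(s)} g_{t(s)} r_{\beta(s)}$, invoke Proposition \ref{prop:seg hit2}, and use compactness of $K$ to make the $\delta$ from Lemma \ref{lem:stable dense} uniform over rotations. If anything, your write-up is slightly more careful than the paper's (which only writes out the full-circle case $\theta \in [0,2\pi)$ and takes a supremum rather than an infimum of the $\delta_A$), but these are cosmetic tightenings of the same argument rather than a different approach.
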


  \begin{proof}
  For each $A\in SO_2$, denote by $\delta_A$ the constant given by Lemma \ref{lem:stable dense} for $\epsilon$. Let $$\delta=\underset{A \in SO_2}{\sup}\, \delta_A,$$ which is finite since $S^1$ is compact.  Let $t_0 >0 $ be a constant as given by Proposition \ref{prop:seg hit2} for $\epsilon=\delta$. Choose $s \in \mathbb{R}$ so that in the Cartan decomposition decomposition of $h_s$ we have that $h_s=r_{\theta_1}g_{t_0}r_{\theta_2}$. 
  Now 
  \begin{equation} \label{eq:relatingHorAndGeo} \bigcup_{\theta\in [0,2\pi)}h_sr_{\theta}M=\bigcup_{\theta\in [0,2\pi)} r_{\theta_1}g_{t_0}r_{\theta_2}r_{\theta}M=\bigcup_{\theta\in [0,2\pi)} r_{\theta_1}g_{t_0}r_{\theta}M
  = r_{\theta_1} \left( \bigcup_{\theta\in [0,2\pi)}g_{t_0}r_{\theta}M \right).
\end{equation}
  
 The set $\bigcup_{\theta\in [0,2\pi)} g_{t_0}r_{\theta}M$ is $\delta$-dense in $\mathcal{M} \cap K_{\delta}$ by construction.  By our choice of $\delta$ together with equation \ref{eq:relatingHorAndGeo}, we have that $\bigcup_{\theta\in [0,2\pi)} h_s r_{\theta} M$ is $\epsilon$-dense.
\end{proof}

\subsection{Step 6A: a residual set of directions}

Recall that a subset $A$ of a topological space $X$ is said to be \emph{meager} if it can be expressed as the union of countably many nowhere dense subsets of $X$, and a \emph{residual set} is the complement of a meager set.  

 \begin{proposition}\label{prop:hor good}
 Let $M$ be a flat surface and $\mathcal{M}=\overline{SL_2(\mathbb{R}) \cdot M}$. For every $\epsilon>0$, the set 
 $$G_{M}(\epsilon) :=\{\theta \in S^1: r_{\theta} \cdot M \text{ is }(\infty, \epsilon) \text{-nice for }h_s\}$$ 
 contains an open dense subset of $S^1$.
 \end{proposition}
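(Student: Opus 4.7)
The proof plan is to combine the Baire Category Theorem with the density result of Proposition \ref{prop:hor dense}. Fix $\epsilon > 0$. Since $\mathcal{M} \cap \mathcal{K}_\epsilon$ is compact, it admits a countable dense subset $\{q_n\}_{n \geq 1}$. For each $n$, define
\[
A_n = \left\{ \theta \in S^1 : \exists \, s \geq 0 \text{ with } h_s r_\theta M \in \mathcal{K}_{\epsilon/2} \text{ and } d_{\mathcal{K}_{\epsilon/2}}(h_s r_\theta M, q_n) < \epsilon/2 \right\}.
\]
The strategy is to show that each $A_n$ is open and dense in $S^1$, and then intersect over $n$.

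For openness: if $\theta_0 \in A_n$ with witness $s_0$, then joint continuity of the map $(\theta, s) \mapsto h_s r_\theta M$, together with the fact that the set $\{N : N \in \mathcal{K}_{\epsilon/2}, \, d_{\mathcal{K}_{\epsilon/2}}(N, q_n) < \epsilon/2\}$ has nonempty interior containing $h_{s_0} r_{\theta_0} M$, gives a neighborhood of $\theta_0$ in $A_n$ (keeping the same $s_0$).

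For density: given any open arc $I = (a - r, a + r) \subset S^1$, apply Proposition \ref{prop:hor dense} with parameters $\epsilon/2$ and $r$ to obtain $s$ such that $\bigcup_{\theta \in I} h_s r_\theta M$ is $(\epsilon/2)$-dense in $\mathcal{M} \cap \mathcal{K}_{\epsilon/2}$. Since $q_n \in \mathcal{M} \cap \mathcal{K}_\epsilon \subset \mathcal{M} \cap \mathcal{K}_{\epsilon/2}$, some $\theta \in I$ witnesses membership of $\theta$ in $A_n$, so $A_n \cap I \neq \emptyset$.

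By the Baire Category Theorem, $G := \bigcap_n A_n$ is a dense $G_\delta$ subset of $S^1$. For any $\theta \in G$ and any $p \in \mathcal{M} \cap \mathcal{K}_\epsilon$, density of $\{q_n\}$ produces $q_n$ with $d(p, q_n) < \epsilon/2$, and by definition some orbit point $h_s r_\theta M$ lies within $\epsilon/2$ of $q_n$; the triangle inequality then gives $\epsilon$-density of $\bigcup_{s \geq 0} h_s r_\theta M$ in $\mathcal{M} \cap \mathcal{K}_\epsilon$, so $\theta \in G_M(\epsilon)$.

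The main subtlety is the bookkeeping between $\mathcal{K}_\epsilon$ and $\mathcal{K}_{\epsilon/2}$: one must exploit the inclusion $\mathcal{K}_\epsilon \subset \mathcal{K}_{\epsilon/2}$ (finer thick parts sit inside coarser ones) to both apply Proposition \ref{prop:hor dense} and to ensure the orbit points used as witnesses of $\epsilon$-density genuinely lie in $\mathcal{K}_\epsilon$ (in practice, one can arrange them to lie in the smaller $\mathcal{K}_{\epsilon/2}$). A minor point is that the argument above produces a dense $G_\delta$ inside $G_M(\epsilon)$, which is residual and in particular dense; extracting a genuine open dense subset (if desired beyond density) would follow by passing to the union of interiors of the level sets $G_M^L(\epsilon/2) \subset G_M(\epsilon)$ coming from the equicontinuity of $\{h_s\}_{s \in [0,L]}$.
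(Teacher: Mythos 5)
Your Baire-category strategy is a genuinely different route from the paper's, and it does not, as written, establish the conclusion as stated. The issue is not minor: a dense $G_\delta$ subset of $S^1$ can have empty interior, so producing a residual subset of $G_M(\epsilon)$ is strictly weaker than producing an open dense subset of $G_M(\epsilon)$. Your sketch of a remedy (``passing to the union of interiors of the level sets $G^L_M(\epsilon/2)$'') identifies the right idea, but leaves the real work undone: one must first argue via compactness of $\mathcal{K}_{\epsilon/2}$ that $\epsilon/4$-density of the full orbit $\bigcup_{s \geq 0} h_s r_\theta M$ forces $\epsilon/2$-density of a \emph{finite} segment $\bigcup_{s \in [0,L_\theta]} h_s r_\theta M$, and then invoke equicontinuity of $\{h_s\}_{s\in[0,L_\theta]}$ to propagate $\epsilon$-density to an open neighborhood of $\theta$. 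Once one writes this out, the Baire step becomes superfluous---you have directly shown that $G_M(\epsilon)$ meets every interval in an open nonempty set.

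That direct argument is exactly what the paper does, and more economically. The paper does not need a countable dense subset $\{q_n\}$ at all: Corollary~\ref{c:U} already packages the compactness and equicontinuity into a single \emph{open} set $U$ of $(L,\epsilon)$-nice surfaces with $\mu(U) > 0$, and Proposition~\ref{prop:hor dense} guarantees that $\bigcup_{\theta\in(\phi-r,\phi+r)} h_s r_\theta M$ is $\delta$-dense for a suitable $\delta$ chosen so that any $\delta$-dense set must intersect $U$. Continuity in $\theta$ then gives an open subarc on which $h_s r_\theta M \in U$, and every such $\theta$ lies in $G_M(\epsilon)$ because $(L,\epsilon)$-niceness of $h_s r_\theta M$ yields $(L+s,\epsilon)$-niceness of $r_\theta M$. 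Note that the paper reserves the Baire Category Theorem for the next step (Theorem~\ref{t:residualSet}), intersecting the $G_M(1/n)$; your proof, by contrast, burns Baire at the level of a single $\epsilon$ and still comes out with only a residual set. For the downstream Theorem~\ref{t:residualSet} residual would actually suffice, but as a proof of the proposition as stated, your argument needs the sketched compactness/equicontinuity step carried out in full.
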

 \begin{proof}It suffices to show that for every $\phi\in S^1$, $r>0$, $(\phi-r,\phi+r)$ we have that $G_{M}(\epsilon)\cap (\phi-r,\phi+r)$ contains a non-empty open set. 
 
 Choose $L$ so that $\{M: M \textrm{ is }(L,\epsilon)-\text{nice for }h_s\}$ contains a non-empty open set $U$ (by Corollary \ref{c:U}).  Since $U$ is open, there exists $\delta>0$ so that if $S$ is any $\delta$-dense subset of $\mathcal{K}_{\epsilon}$ then $S$ intersects $U$. Without loss of generality, assume $\delta \leq \epsilon$.  Applying Proposition \ref{prop:hor dense} with $\epsilon=\delta$ and $r=r$, we obtain $s \in \mathbb{R}$ so that $$A := \bigcup_{\theta\in (\phi-r,\phi+r)}h_sr_{\theta}M$$ is $\delta$-dense in $\overline{SL_2(\mathbb{R}) \cdot M} \cap K_{\delta}$, and hence also $\delta$-dense in $\overline{SL_2(\mathbb{R}) \cdot M} \cap K_{\epsilon}$. Thus, $A$ has nonempty intersection with the open set $U$.  However, since $h_sr_{\theta} \cdot M$ is continuous in $\theta$, the interval $(\phi-r,\phi+r)$ in fact contains an open set of angles $\theta$ such that $h_s r_{\theta} \cdot M \in U$.  All of these angles $\theta$ are in $G_{M}(\epsilon)$.
  \end{proof}
 
 \begin{theorem} \label{t:residualSet}
 For any flat surface $M$, there exists a residual set $A \subset S^1$ such that $$\overline{H r_{\theta} \cdot M} = \overline{SL_2(\mathbb{R}) \cdot M}$$ for all $\theta \in A$.  
 \end{theorem}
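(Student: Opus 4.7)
The plan is to apply the Baire Category Theorem directly to Proposition \ref{prop:hor good}. Fix a sequence $\epsilon_n \searrow 0$, for instance $\epsilon_n = 1/n$. For each $n$, Proposition \ref{prop:hor good} supplies an open dense subset $O_n \subseteq G_M(\epsilon_n)$ of $S^1$. Since $S^1$ is a compact metric space and therefore a Baire space, the countable intersection
$$A := \bigcap_{n \geq 1} O_n$$
is a dense $G_\delta$ set, hence residual. By construction, every $\theta \in A$ lies in $G_M(\epsilon_n)$ for every $n$, meaning that $r_\theta \cdot M$ is $(\infty, \epsilon_n)$-nice for $h_s$ for all $n$; equivalently, the forward orbit $\bigcup_{s \geq 0} h_s r_\theta \cdot M$ is $\epsilon_n$-dense in $\overline{SL_2(\mathbb{R}) \cdot M} \cap \mathcal{K}_{\epsilon_n}$ for each $n$.

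It then remains to upgrade these layered $\epsilon_n$-density statements into the desired equality $\overline{H r_\theta \cdot M} = \overline{SL_2(\mathbb{R}) \cdot M}$. The containment $\overline{H r_\theta \cdot M} \subseteq \overline{SL_2(\mathbb{R}) \cdot M}$ is immediate, since $\overline{SL_2(\mathbb{R}) \cdot M}$ is closed, $H$-invariant, and contains $r_\theta \cdot M$. For the reverse containment, I would use the exhaustion $\mathcal{H}_1 = \bigcup_n \mathcal{K}_{\epsilon_n}$, which holds because any unit-area translation surface has a shortest saddle connection of positive length and so belongs to $\mathcal{K}_{\epsilon_n}$ for all sufficiently large $n$. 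Given $N \in \overline{SL_2(\mathbb{R}) \cdot M}$, choose $n_0$ with $N \in \mathcal{K}_{\epsilon_{n_0}}$; for every $n \geq n_0$ the $\epsilon_n$-density hypothesis produces a point $N_n \in H r_\theta \cdot M$ in the same period-coordinate patch as $N$ with local distance less than $\epsilon_n$ from $N$. Because period coordinates give a local homeomorphism to Euclidean space, $N_n \to N$ in the stratum topology, so $N \in \overline{H r_\theta \cdot M}$.

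I do not expect a serious obstacle, because Proposition \ref{prop:hor good} has already done the substantive dynamical work of producing, at every scale $\epsilon$, an open dense set of directions along which the horocycle orbit is $\epsilon$-dense in the thick part of the $SL_2(\mathbb{R})$ orbit closure. The remaining argument is the standard Baire assembly plus the elementary observation that the thick parts $\mathcal{K}_{\epsilon}$ exhaust the stratum. A minor point worth noting is that Proposition \ref{prop:hor good} uses only the forward horocycle orbit, so Theorem \ref{t:residualSet} in fact gives the slightly stronger residual statement that $\overline{H^+ r_\theta \cdot M} = \overline{SL_2(\mathbb{R}) \cdot M}$ on $A$.
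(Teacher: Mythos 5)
Your proof is correct and follows exactly the route the paper takes: the paper's proof of Theorem \ref{t:residualSet} is the one-line invocation of Proposition \ref{prop:hor good} together with the Baire category theorem, and your write-up simply spells out that one-liner (intersecting $G_M(\epsilon_n)$ over $\epsilon_n \searrow 0$ and using the exhaustion of the stratum by the thick parts $\mathcal{K}_{\epsilon_n}$).
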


 \begin{proof} This follows from Proposition \ref{prop:hor good} and the Baire category theorem.
 \end{proof}


\subsection{Hitting many sets -- Proposition \ref{prop:hit many sets}}

\begin{proposition}\label{prop:hit many sets} For any $\epsilon>0$, translation surface $M$ so that $\overline{SL_2(\mathbb{R})\cdot M}=\mathcal{M}$ with associated affine invariant probability measure $\mu$,  and finite collection of open sets $W_1,...., W_n$, there exists arbitrarily large $t \in \mathbb{R}$ such that for each $i$,
$$\lambda(\{\theta: g_tr_{\theta}M\in W_i\})>\mu(W_i)-\epsilon.$$
\end{proposition}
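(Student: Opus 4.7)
The plan is to extend the one-function pigeonhole argument of Proposition~\ref{prop:seg hit} to handle $n$ open sets simultaneously. By inner regularity of $\mu$ and Urysohn's lemma, for each $i$ I would choose $\phi_i \in C_c(\mathcal{M})$ with $0 \leq \phi_i \leq \mathbf{1}_{W_i}$ and
$$c_i := \int_{\mathcal{M}} \phi_i\, d\mu > \mu(W_i) - \epsilon/2.$$
Setting $f_i(t) := \int_{S^1} \phi_i(g_tr_\theta M)\, d\lambda(\theta)$, the inequality $\phi_i \leq \mathbf{1}_{W_i}$ gives $f_i(t) \leq \lambda(\{\theta : g_tr_\theta M \in W_i\})$, reducing the proposition to finding arbitrarily large $t$ with $f_i(t) > c_i - \epsilon/2$ for every $i$.

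Next, I would apply Theorem~\ref{thm:circle hits} to each $\phi_i$; finiteness of the collection yields a uniform $T_0$ so that for all $T > T_0$,
$$\left|\frac{1}{T}\int_0^T f_i(t)\, dt - c_i\right| < \eta, \qquad i = 1, \ldots, n,$$
for any preassigned $\eta > 0$. Using $0 \leq f_i \leq 1$ together with this time-averaged bound, one shows that the bad set $B_i := \{t \in [0,T] : f_i(t) \leq c_i - \epsilon/2\}$ satisfies
$$\frac{|B_i|}{T} \leq \frac{1 - c_i + \eta}{1 - c_i + \epsilon/2},$$
and any $t \in [0,T] \setminus \bigcup_i B_i$ works.

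The main obstacle is that a naive union bound on the $B_i$ can exceed $[0,T]$ when $n$ is large and some $\mu(W_i)$ is close to $0$. Two observations allow this to be circumvented. First, for indices with $\mu(W_i) < \epsilon$ the desired inequality is automatic (the left-hand side is non-negative), so one may pass to a sub-collection in which each $c_i$ is bounded below by a definite positive quantity, improving the pigeonhole bound uniformly. Second, Theorem~\ref{thm:circle hits} is stated for an arbitrary sub-arc $I \subset [0, 2\pi)$, which allows one to apply the same pigeonhole not just to the full circle average but also to sub-arc averages, providing additional constraints on the joint behavior of the $f_i(t)$. Combining these two ingredients with an appropriate choice of $\eta$ and $T$ is the combinatorial heart of the argument and is where I expect the main technical difficulty to lie.
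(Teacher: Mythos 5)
Your approach is genuinely different from the paper's, and you have correctly identified the obstacle that makes the naive pigeonhole fail---but neither of your proposed fixes actually closes the gap. Your bound $|B_i|/T \leq (1-c_i+\eta)/(1-c_i+\epsilon/2)$ is correct, and discarding indices with $\mu(W_i)<\epsilon$ guarantees $c_i\geq \epsilon/2$, but then the best uniform bound is $|B_i|/T \leq 1-\epsilon/2 + O(\eta)$, so the union bound can reach $n(1-\epsilon/2)$, which exceeds $1$ already for $n=2$ and $\epsilon<1$. The sub-arc form of Theorem~\ref{thm:circle hits} is a statement about subintervals of $[0,2\pi)$ in the $\theta$-variable, not about subintervals of $[0,T]$ in the $t$-variable; it gives no extra leverage on the set of good times $t$, which is what you need. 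More fundamentally, Ces\`aro-averaged equidistribution in $t$ cannot preclude the good sets $[0,T]\setminus B_i$ from being nearly disjoint for different $i$---it only constrains the sizes $|B_i|$, not their locations---so no choice of $\eta$ and $T$ will rescue the union bound.

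The paper sidesteps this by encoding \emph{all} $n$ constraints into a single open set before EMM is invoked, so that Proposition~\ref{prop:seg hit} (your ``one-function'' case) is applied exactly once. Concretely, using inner regularity (Lemma~\ref{l:openBigSubset}) one shrinks each $W_i$ to an open $V_i\subset W_i$ whose $\delta_i$-neighborhood lies in $W_i$, and then, using the Mautner phenomenon (ergodicity of $H$), Lemma~\ref{lem:unif distrib} produces for large $L$ a single open set $\tilde{U}_{L,\epsilon}(V_1,\dots,V_n)$ of $\mu$-measure $>1-\epsilon/2$ consisting of surfaces whose length-$L$ horocycle segment spends $\lambda$-fraction $>\mu(V_i)-\epsilon$ of its time in $V_i$ \emph{for every} $i$ simultaneously. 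Proposition~\ref{prop:seg hit} then gives arbitrarily large $T$ with $\lambda(\{\theta: g_Tr_\theta M\in\tilde{U}_{L,\epsilon}\})>1-\epsilon$, and Lemma~\ref{lem:track hor} (arcs of $g_T$-pushed circles track horocycle segments to within $\min_i\delta_i$) transfers the horocycle time-frequencies to circle $\theta$-frequencies. The crucial ingredient you are missing is this ergodicity-based ``bundling'' of the $n$ conditions into one open set, which turns $n$ pigeonhole constraints with potentially disjoint good sets into a single one.
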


\begin{definition}
 For any real numbers $L,\epsilon>0$, affine invariant probability measure $\mu$ on the stratum $\mathcal{H}$, and finite collection of open  sets $W_1,\dots,W_n$ in in $\mathcal{H}$, define 
\begin{multline*}
\tilde{U}_{L,\epsilon}^{\mu}(W_1,\dots,W_n) :=  \\ \left\{ M' \in \mathcal{H} \mid \lambda \left(
\{ s \in [0,L) :h_sM'\in W_i\} \right)
 >L \cdot (\mu(W_i)-\epsilon) \text{ for all }  i=1,\dots,n  \right\}.
\end{multline*}
\end{definition}

\noindent When the sets $W_1,\dots,W_n$ and the measure $\mu$ are clear from the context, we will write simply $\tilde{U}_{L,\epsilon}$. 

\begin{lemma} \label{lem:unif distrib} Fix $\epsilon>0$, affine invariant probability measure $\mu$ on $\mathcal{H}$ and finitely many open  sets $W_1,...,W_n$ in $\mathcal{H}.$ 
Then  $$\underset{L \to \infty}{\lim}\, \mu(\tilde{U}_{L,\epsilon})=1.$$  Moreover, for each $L>0$, $\tilde{U}_{L,\epsilon}$ is open.

\end{lemma}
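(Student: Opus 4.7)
The plan is to handle the two assertions separately; neither requires machinery beyond what the paper has already assembled.

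\textbf{Openness.} For each $i$, the indicator $\mathbf{1}_{W_i}$ is lower semicontinuous because $W_i$ is open, and the map $(s,M')\mapsto h_sM'$ is jointly continuous. Hence for every convergent sequence $M'_k\to M'$ and every $s$, $\liminf_k \mathbf{1}_{W_i}(h_sM'_k)\ge \mathbf{1}_{W_i}(h_sM')$. Applying Fatou's lemma to the integrand over $[0,L)$ gives
$$\liminf_k \lambda\bigl(\{s\in[0,L):h_sM'_k\in W_i\}\bigr)\ \ge\ \lambda\bigl(\{s\in[0,L):h_sM'\in W_i\}\bigr),$$
so the functional $M'\mapsto \lambda(\{s\in[0,L):h_sM'\in W_i\})$ is lower semicontinuous. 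Its preimage of the open ray $(L(\mu(W_i)-\epsilon),\infty)$ is therefore open, and intersecting finitely many such open sets over $i=1,\dots,n$ shows $\tilde U_{L,\epsilon}$ is open.

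\textbf{Convergence of measure.} Because $\mu$ is $SL_2(\mathbb{R})$-invariant and ergodic, the Mautner phenomenon (Section \ref{ss:dynamicalPreliminaries}) upgrades this to ergodicity of $\mu$ under the horocycle flow $H$. Each $\mathbf{1}_{W_i}$ lies in $L^1(\mu)$, so by the Birkhoff ergodic theorem there is a $\mu$-full-measure set $E_i$ on which
$$\frac{1}{L}\int_0^L \mathbf{1}_{W_i}(h_sM')\,ds\ \longrightarrow\ \mu(W_i)\quad\text{as }L\to\infty.$$
Let $E=\bigcap_{i=1}^n E_i$, which still has full $\mu$-measure. For each $M'\in E$ there is a threshold $L_0(M')$ such that for all $L\ge L_0(M')$ and all $i$ simultaneously,
$$\lambda\bigl(\{s\in[0,L):h_sM'\in W_i\}\bigr)\ >\ L(\mu(W_i)-\epsilon),$$
that is, $M'\in \tilde U_{L,\epsilon}$.

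\textbf{Combining.} The sets $E_L:=\{M'\in E:L_0(M')\le L\}$ increase to $E$ as $L\to\infty$, so $\mu(E_L)\nearrow 1$; since $E_L\subseteq \tilde U_{L,\epsilon}$, this forces $\mu(\tilde U_{L,\epsilon})\to 1$. I do not anticipate a genuine obstacle here: openness is a soft lower-semicontinuity argument, and the limit statement is Birkhoff applied to finitely many integrable lower-semicontinuous functions, the only subtlety being the invocation of Mautner to promote $SL_2(\mathbb{R})$-ergodicity to $H$-ergodicity, which the paper has already recorded.
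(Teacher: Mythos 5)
Your proof is correct and takes essentially the same route as the paper. For the limit, both arguments rest on Mautner to upgrade $SL_2(\mathbb{R})$-ergodicity to $H$-ergodicity and then apply Birkhoff; the paper records only the invocation of ergodicity, while you spell out the Birkhoff-plus-monotone-exhaustion step ($E_L\nearrow E$), which is exactly what is implicit there. For openness the paper shrinks each $W_i$ to an interior open set $V_\delta$ with a uniform buffer and uses equicontinuity of $\{h_s\}_{s\in[0,L]}$, whereas you deduce lower semicontinuity of $M'\mapsto\lambda(\{s\in[0,L):h_sM'\in W_i\})$ from the openness of $W_i$ together with Fatou's lemma; these are two standard packagings of the same soft observation and deliver identical conclusions.
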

\begin{proof}By the ergodicity of $H$ with respect to $\mu$ (via the Mautner phenomenon) we have that $$\underset{L \to \infty}{\lim}\, \mu(\tilde{U}_{L,\epsilon})=1.$$ Because $H$ acts continuously and all the $W_i$ are open we have that  $\tilde{U}_{L,\epsilon}$ is also open. Indeed, for each $M' \in \tilde{U}_{L,\epsilon}$ and $i$ there exists $V_\delta \subset W_i$ so that 
$$\lambda \left(\{s \in [0,L) \mid h_sM'\in V_\delta\} \right)>L \cdot \left(\mu(W_i)-\epsilon \right).$$ 
 By the continuity of $h_s$ the lemma follows.
\end{proof}
We need the following easy and well known result in measure theory.
\begin{lemma}  \label{l:openBigSubset}
Let $U$ be an  open set with finite measure for a Borel measure $\nu$. Then for any $\delta>0$ there exists $V_{\delta}$ an open subset of $U$, and a number $c>0$ so that $\nu(V_{\delta})>\nu(U)-\delta$ and $B(x,c) \subset U$ for all $x \in V_\delta$.
\end{lemma}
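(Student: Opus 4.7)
The plan is to exhaust $U$ from the inside by the open sets of points lying uniformly far from the complement, and then invoke continuity of the measure.

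Concretely, fix a metric $d$ compatible with the topology of the stratum and for each positive integer $n$ define
\begin{equation*}
V_n := \{x \in U : d(x, U^c) > 1/n\}.
\end{equation*}
The function $x \mapsto d(x, U^c)$ is continuous (in fact $1$-Lipschitz), so each $V_n$ is open. Since $U$ is open, every $x \in U$ satisfies $d(x, U^c) > 0$, hence $x \in V_n$ for all sufficiently large $n$; combined with the obvious inclusion $V_n \subset V_{n+1}$, this gives $V_n \nearrow U$.

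Next I use that $\nu(U) < \infty$ to invoke continuity of measure from below: $\nu(V_n) \to \nu(U)$ as $n \to \infty$. Given $\delta > 0$, pick $n$ large enough that $\nu(V_n) > \nu(U) - \delta$, and set $V_\delta := V_n$ and $c := 1/n$. The containment $B(x,c) \subset U$ for $x \in V_\delta$ then follows immediately: if $d(x,U^c) > 1/n$ and $y \in B(x,1/n)$ then $d(y,x) < 1/n < d(x,U^c)$, so $y \notin U^c$, i.e.\ $y \in U$.

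There is no real obstacle here; the only thing to be careful about is that the continuity of measure from below requires $\nu(U) < \infty$, which is given in the hypothesis. Everything else is a standard exhaustion of an open set by its ``$1/n$-interior'' in a metric space together with the continuity of the distance-to-complement function.
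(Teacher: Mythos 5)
Your proof takes essentially the same approach as the paper: exhaust $U$ from within by the sets of points at distance at least $1/n$ from $U^c$ and invoke continuity of measure from below (which is where finiteness of $\nu(U)$ enters). If anything, your version is a touch more careful than the paper's, since defining $V_n$ by the strict inequality $d(x,U^c)>1/n$ guarantees openness via continuity of the distance function, whereas the paper's set $S_{1/n}=\{x: B(x,1/n)\subset U\}$ need not be open as written.
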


\begin{proof} Let $S_{\epsilon}$ be $\{x\in U: B(x,\epsilon)\subset U\}$. Since $U$ is open $\bigcup_{n\in \mathbb{N}} S_{\frac 1 n}=U$. 
So $\nu(U)=\underset{n \to \infty}{\lim} \, \nu(S_{\frac 1 n})$. 
\end{proof}

\begin{proof}[Proof of Proposition \ref{prop:hit many sets}] 

 For each $i$, apply Lemma \ref{l:openBigSubset} to choose an open subset $V_i \subset W_i$ so that the $\delta_i$-neighborhood of $V_i$ is contained in $W_i$ and so that $\mu(W_i\setminus V_i)<\epsilon$ for all $i$.  Then, by Lemma \ref{lem:unif distrib}, choose a real number $L>0$ so that $$\mu(\tilde{U}_{L,\epsilon}(V_1,\dots,V_n))>1-\frac \epsilon 2.$$ By Proposition \ref{prop:seg hit} there exists arbitrarily large $T$ so that 
 $$\lambda(\{\theta:g_Tr_\theta M \in \tilde{U}_{L,\epsilon}(V_1,\dots,V_n)\}) > 1-\epsilon.$$

 By Lemma \ref{lem:track hor}
choosing $\phi$ close enough to 0 and $t$ large enough we have
$$d(g_tr_{\theta+\phi}M,h_{-e^{2t}\tan(\phi)}g_tr_\theta M)<\min\, \delta_i$$

Proposition \ref{prop:hit many sets} follows.
\end{proof}


\subsection{Almost every horocycle is dense}

Before using Proposition \ref{prop:hit many sets} to prove Theorem \ref{t:kakFullMeasure} we require some set up and a lemma to use the Cartan decomposition to relate $g_t$ and $h_s$.

\begin{definition}
 Let $V$, open, be chosen so that $V \subset U_{L,\epsilon}$, $\mu(V)>\mu(U_{L,\epsilon})-\epsilon_3$ and there exists $c>0$ so that a $c$ neighborhood of $V$ is contained in $U_{L,\epsilon}$.  Let $\psi_1,...,\psi_n$ be a $c$-dense subset of $S^1$. We say $t$ is a {\bf spreading time} (for $M$) if for each $i=1,...,n$ we have
$$\lambda(\{\theta:g_tr_\theta  M\in r_{-\psi_i}V\})>\mu(V)-\epsilon_3.$$ 
\end{definition}

The next lemma uses the notation above.
\begin{lemma}\label{lem:hs good} If $t$ is a spreading time and the Cartan decomposition of $h_s=r_{\phi_1}g_tr_{\phi_2}$ for any $\phi_1,\phi_2$ then
$$\lambda(\{\theta:h_sr_\theta M\in U_{L,\epsilon}\})>\mu(V)-\epsilon_3>\mu(U_{L,\epsilon})-2\epsilon_3.$$
\end{lemma}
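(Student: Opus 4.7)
The plan is to translate the statement about $h_s r_\theta M$ into a statement about $g_t r_{\theta'} M$ using the Cartan decomposition, and then invoke the spreading-time hypothesis after finding an appropriate $\psi_i$ that approximates $\phi_1$.

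First, I would use the given Cartan decomposition $h_s = r_{\phi_1} g_t r_{\phi_2}$ to rewrite
\begin{equation*}
h_s r_\theta M = r_{\phi_1} g_t r_{\phi_2 + \theta} M.
\end{equation*}
Thus the condition $h_s r_\theta M \in U_{L,\epsilon}$ is equivalent to the condition $g_t r_{\phi_2+\theta} M \in r_{-\phi_1} U_{L,\epsilon}$. If I can exhibit some $\psi_i$ from the prescribed $c$-dense set for which $r_{-\psi_i} V \subseteq r_{-\phi_1} U_{L,\epsilon}$, then the Lebesgue measure of the first set will be at least that of $\{\theta : g_t r_{\phi_2+\theta} M \in r_{-\psi_i} V\}$.

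The key geometric step is to locate such a $\psi_i$. The $c$-density of $\{\psi_1,\dots,\psi_n\}$ is meant in tandem with Lemma \ref{lem:less sl2}, so that a rotation $r_{\phi_1 - \psi_i}$ with $|\phi_1 - \psi_i|$ sufficiently small moves every point of $V$ less than $c$ in the stratum metric (the constant $c$ in the definition of the $\psi_i$ should be chosen small enough for this to hold, using that rotations are equicontinuous on any compact piece of the stratum). Pick $\psi_i$ with this property; then $r_{\phi_1 - \psi_i} V$ is contained in the $c$-neighborhood of $V$, which by construction lies in $U_{L,\epsilon}$. Equivalently, $r_{-\psi_i} V \subseteq r_{-\phi_1} U_{L,\epsilon}$, as desired.

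Finally I would combine these observations. By the inclusion just established,
\begin{equation*}
\{\theta : h_s r_\theta M \in U_{L,\epsilon}\} \supseteq \{\theta : g_t r_{\phi_2+\theta} M \in r_{-\psi_i} V\},
\end{equation*}
and the right-hand set has the same Lebesgue measure as $\{\theta : g_t r_\theta M \in r_{-\psi_i} V\}$ by translation invariance of $\lambda$ on $S^1$. Since $t$ is a spreading time, this last measure exceeds $\mu(V) - \epsilon_3$, yielding the first inequality of the lemma. The second inequality $\mu(V) - \epsilon_3 > \mu(U_{L,\epsilon}) - 2\epsilon_3$ is immediate from the defining property $\mu(V) > \mu(U_{L,\epsilon}) - \epsilon_3$ of $V$.

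The only real subtlety (and the step where I would spend the most care) is reconciling the two uses of the constant $c$: the $c$ measuring stratum thickness around $V$ and the $c$ measuring density of the $\psi_i$ in $S^1$. This is handled by Lemma \ref{lem:less sl2}, which ensures that rotations close to the identity act with small displacement on points of the compact set $\mathcal{K}_\epsilon \supseteq V$, so by shrinking the angular $c$ if necessary we can guarantee $r_{\phi_1-\psi_i}V \subseteq U_{L,\epsilon}$ for some $\psi_i$ in the finite $c$-net.
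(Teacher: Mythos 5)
Your proof is correct and follows essentially the same route as the paper's: rewrite $h_s r_\theta M$ via the Cartan decomposition, approximate $\phi_1$ by a $\psi_i$ from the $c$-dense net so that the residual rotation $r_{\phi_1-\psi_i}$ carries $V$ into $U_{L,\epsilon}$, and then invoke the spreading-time hypothesis together with translation invariance of $\lambda$ on $S^1$. You also correctly flag the one delicate point (calibrating the angular $c$ against the stratum-metric $c$ via Lemma \ref{lem:less sl2}), which the paper handles implicitly.
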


\begin{proof}
$$\{\theta:h_sr_\theta M\in U_{L,\epsilon}\}= \{\theta:r_{\phi_1}g_tr_{\phi_2}r_\theta M\in U_{L,\epsilon}\}=
\{\theta-\phi_2:r_{\psi_i+\rho}g_tr_\theta M \in U_{L,\epsilon}\}$$
where $|\rho|<c$. 
Let $$G=\{\theta:g_tr_{\theta}M\in r_{-\psi_i}V\}.$$ 
If $\theta \in G$,  then $$r_{\psi_i+\rho}g_tr_\theta M\subset U_{L,\epsilon}.$$ Since $t$ is spreading time,
 $$\lambda(G)>\mu(U_{L,\epsilon})-2\epsilon_3.$$
  Now 
$$\{\theta-\phi_2:\theta \in G\}\subset \{\theta: h_sr_{\theta}M\in U_{L,\epsilon}\}.$$ The lemma follows. 
\end{proof}

\begin{proof}[Proof of Theorem \ref{t:kakFullMeasure}] 
First it suffices to show that for all $\epsilon>0$  we have that 
$$\lambda(\{\theta: Hr_{\theta}M\text{ is }\epsilon\text{ dense in }\mathcal{K}_\epsilon\})>1-\epsilon.$$ 
By Proposition \ref{prop:hit many sets} there exists a spreading time $t$. 
Thus, by Lemmas \ref{lem:hs good} and \ref{lem:decomp KAK}, we have that there exists $s$ so that 
$$\lambda(\{\theta: h_s r_\theta \in U_{L,\epsilon})>1-\epsilon.$$ Choosing $R=L+s$, we have that 
$$\lambda(\{\theta: \bigcup_{0\leq \ell<R}h_\ell r_\theta M \text{ is }\epsilon \text{ dense in }\mathcal{K}_{\epsilon}\})>1-\epsilon.$$ Theorem \ref{t:kakFullMeasure} follows.
\end{proof}


\section{New characterizations of lattice surfaces}

The goal of this section is to prove Theorem \ref{t:TFAEHorocycleLattices}.  For the convenience of the reader, we reproduce here the five conditions that Theorem \ref{t:TFAEHorocycleLattices} asserts are equivalent:

\begin{enumerate}[(I)]
\item \label{itemI} $M$ is a lattice surface.

\item \label{itemII} For every angle $\theta \in S^1$, every $H$-minimal subset of $\overline{Hr_{\theta} \cdot M}$ is a periodic $H$-orbit.  

\item \label{itemIII} There exists a positive Lebesgue measure set of angles $\Theta \subset S^1$ such that $\theta \in \Theta$  implies every $H$-minimal subset of $\overline{Hr_{\theta} \cdot M}$ is a periodic $H$-orbit.  

\item \label{itemIV} Every $H$-minimal subset of $\overline{GL_2(\mathbb{R}) \cdot M}$ (or of $\overline{SL_2(\mathbb{R})\cdot M}$) is a periodic $H$-orbit.
\end{enumerate}

\begin{proof}[Proof of Theorem \ref{t:TFAEHorocycleLattices}]
Lemmas \ref{l:first3}-\ref{l:4to1} establish the following circle of implications:
$$ I \rightarrow II \rightarrow III \rightarrow IV \rightarrow I $$ 
\end{proof}

\begin{lemma}
\label{l:first3}
(\ref{itemI}) implies (\ref{itemII}) implies (\ref{itemIII})
\end{lemma}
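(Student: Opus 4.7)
My plan is as follows. The implication (II) $\Rightarrow$ (III) is immediate: take $\Theta = S^{1}$, which has positive (indeed full) Lebesgue measure, and (III) holds trivially from (II). So the substantive part of the lemma is (I) $\Rightarrow$ (II), which I would prove by a case analysis based on the Veech Dichotomy together with the Smillie--Weiss classification of $H$-minimal sets.

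Fix $\theta \in S^{1}$ and let $X \subseteq \overline{H r_{\theta} \cdot M}$ be an arbitrary $H$-minimal subset. Since $r_{\theta} M$ has Veech group $r_{\theta} \Gamma_{M} r_{\theta}^{-1}$, it is itself a lattice surface, so I can apply the Veech Dichotomy to $r_{\theta} M$ in its horizontal direction.

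In the periodic case, $r_{\theta} M$ admits a horizontal cylinder decomposition. Proposition \ref{prop:minimalSets} then identifies $\overline{H \cdot r_{\theta} M}$ as a single $d$-dimensional torus on which $H$ acts minimally by a translation flow, with $d$ equal to the $\mathbb{Q}$-rank of the horizontal cylinder moduli. By Theorem \ref{t:LatticeCharacterizations}, lattice surfaces are uniformly completely parabolic, so all horizontal moduli are commensurable, forcing $d = 1$, and hence $X = \overline{H \cdot r_{\theta} M}$ is a periodic $H$-orbit. In the uniquely ergodic case, I would invoke Theorem \ref{t:SmillieWeissMinimalCharacterization}, which asserts that every $H$-minimal set has the form described by Proposition \ref{prop:minimalSets}; hence $X$ is itself a torus consisting of horizontally periodic surfaces. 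Since $M$ is a lattice surface, Theorem \ref{t:LatticeCharacterizations} gives that $SL_{2}(\mathbb{R}) \cdot M$ is closed, so $X \subseteq \overline{SL_{2}(\mathbb{R}) \cdot M} = SL_{2}(\mathbb{R}) \cdot M$. Every surface $N \in X$ is thus itself a lattice surface (its Veech group being conjugate to $\Gamma_{M}$), and being horizontally periodic with parabolicity in every periodic direction, its horizontal moduli are commensurable. Again $d = 1$, so $X$ is a periodic $H$-orbit.

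The most delicate step is the uniquely ergodic case: a priori the ambient orbit closure $\overline{H r_{\theta} \cdot M}$ need not be contained in a compact subset of a single stratum, so one cannot directly invoke the hypothesis of Theorem \ref{t:SmillieWeissMinimalCharacterization} against the whole orbit closure. However, the statement of Theorem \ref{t:SmillieWeissMinimalCharacterization} as reproduced in the excerpt asserts the conclusion for \emph{any} $H$-minimal set, and this is exactly what is needed to handle the minimal set $X$ itself. Modulo this invocation, the argument is a direct synthesis of the Veech Dichotomy, the Smillie--Weiss classification, and uniform complete parabolicity of lattice surfaces.
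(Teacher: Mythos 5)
Your proof is correct and uses essentially the same ingredients as the paper's: the Smillie--Weiss characterization of $H$-minimal sets (Theorem \ref{t:SmillieWeissMinimalCharacterization} and Proposition \ref{prop:minimalSets}), the closedness of the $SL_2(\mathbb{R})$-orbit of a lattice surface, and uniform complete parabolicity from Theorem \ref{t:LatticeCharacterizations} to force commensurable horizontal moduli and hence $d=1$. The one substantive difference is the case split via the Veech Dichotomy, which is superfluous: your ``uniquely ergodic case'' argument never actually uses unique ergodicity and already handles both branches uniformly. That is exactly what the paper does --- it picks an arbitrary surface $L$ in the minimal set $A$, applies Theorem \ref{t:SmillieWeissMinimalCharacterization} to deduce $L$ is horizontally periodic, uses the closed orbit to see $L$ is a lattice surface, invokes uniform complete parabolicity, and concludes via Proposition \ref{prop:minimalSets}. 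Once you commit to invoking Theorem \ref{t:SmillieWeissMinimalCharacterization} on the minimal set itself, there is no need to first sort $r_\theta M$ by the Veech Dichotomy.
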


\begin{proof}
To show (\ref{itemI}) implies (\ref{itemII}), fix a lattice surface $M$ and angle $\theta$.  Let $L$ be any surface in any $H$-minimal subset $A$ of $\overline{Hr_{-\theta} \cdot M}$.  Since a lattice surface has, by definition, a closed $SL_2(\mathbb{R})$ orbit, $L$ is the image of $M$ under some element of $SL_2(\mathbb{R})$, and hence $L$ is also a lattice surface.  By Theorem \ref{t:SmillieWeissMinimalCharacterization}, $L$ is periodic in the horizontal direction and, by Theorem \ref{t:LatticeCharacterizations}, $L$ is uniformly completely parabolic, so the moduli of all the horizontal cylinders of $L$ are commensurable.  Hence, by Proposition \ref{prop:minimalSets}, $A$ is the periodic $H$-orbit of $L$.  

 That (\ref{itemII}) implies (\ref{itemIII}) is trivial.
\end{proof}

We use Theorem \ref{t:kakFullMeasure}  to prove (\ref{itemIII}) implies (\ref{itemIV}):

\begin{lemma}
\label{l:IIItoIV}
(\ref{itemIII}) implies (\ref{itemIV})
\end{lemma}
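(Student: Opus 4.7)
The plan is to exploit the main Theorem \ref{t:kakFullMeasure} almost directly. That theorem guarantees a \emph{full} Lebesgue measure set $F \subset S^1$ such that for every $\theta \in F$, the horocycle orbit closure $\overline{H r_\theta \cdot M}$ coincides with $\overline{SL_2(\mathbb{R}) \cdot M}$. Hypothesis (\ref{itemIII}) supplies a set $\Theta \subset S^1$ of positive Lebesgue measure on which a structural property of $H$-minimal subsets holds. Since $F$ has full measure and $\Theta$ has positive measure, $\Theta \cap F$ is nonempty (indeed of positive measure), so we may pick any $\theta_0 \in \Theta \cap F$.

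For this $\theta_0$, the equality $\overline{H r_{\theta_0} \cdot M} = \overline{SL_2(\mathbb{R}) \cdot M}$ lets us transport the minimal-set information from $\overline{H r_{\theta_0} \cdot M}$ to $\overline{SL_2(\mathbb{R}) \cdot M}$: by (\ref{itemIII}), every $H$-minimal subset of the former is a periodic $H$-orbit, and so the same is true of every $H$-minimal subset of the latter. This gives the $SL_2(\mathbb{R})$-version of (\ref{itemIV}).

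To promote the statement to $\overline{GL_2(\mathbb{R}) \cdot M}$, note that $GL_2(\mathbb{R}) = \mathbb{R}^+ \cdot SL_2(\mathbb{R})$, so $\overline{GL_2(\mathbb{R}) \cdot M}$ is the union of positive rescalings of $\overline{SL_2(\mathbb{R}) \cdot M}$. Since scaling commutes with the horocycle flow (in the sense that $h_s \cdot (cN) = c \cdot h_s N$ for any scalar $c>0$), the $H$-minimal subsets of $\overline{GL_2(\mathbb{R}) \cdot M}$ are precisely the positive rescalings of $H$-minimal subsets of $\overline{SL_2(\mathbb{R}) \cdot M}$, and rescalings of periodic $H$-orbits remain periodic $H$-orbits.

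There is no serious obstacle: the force of the argument is entirely contained in Theorem \ref{t:kakFullMeasure}, and the only thing to check is the completely elementary fact that a positive-measure set must intersect a full-measure set. The remark in the introduction that ``the full strength of Theorem \ref{t:kakFullMeasure} is not required; we only use that there exists a positive measure set of angles $\theta$ for which the horocycle orbit closure equals the $SL_2(\mathbb{R})$ orbit closure'' is exactly this observation.
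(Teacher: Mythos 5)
Your proof is correct and is essentially the paper's argument: both invoke Theorem~\ref{t:kakFullMeasure} to find an angle where the horocycle orbit closure equals the $SL_2(\mathbb{R})$ orbit closure, intersect with the positive-measure set $\Theta$, transport the minimality property, and reduce the $GL_2(\mathbb{R})$ case to the $SL_2(\mathbb{R})$ case by rescaling (the paper rescales a representative $N$ of the minimal set, while you phrase the same reduction as a general statement about rescalings of $H$-minimal sets — a purely presentational difference). If anything, you are slightly more explicit than the paper about the elementary step that a full-measure set must meet a positive-measure set.
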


\begin{proof}
Assume $M$ is a flat surface for which (\ref{itemIII}) holds.   Let $\mathcal{A}$ be an $H$-minimal subset of $\overline{GL_2(\mathbb{R})\cdot M}$.  Then $\mathcal{A}$ has the form $\mathcal{A} = \overline{H \cdot N}$ for some flat surface $N \in \overline{GL_2(\mathbb{R})\cdot M}$.  We may assume without loss of generality that $N$ is in $\overline{SL_2(\mathbb{R}) \cdot M}$ (all the surfaces in $\mathcal{A}$ necessarily have the same area, so we may rescale them to have the same area as that of $M$, and this subset of rescaled surfaces will still be an $H$-minimal set).  By Theorem \ref{t:kakFullMeasure}, there exists an angle $\theta$ such that $\overline{Hr_{-\theta} \cdot M} = \overline{SL_2(\mathbb{R}) \cdot M}$.  Thus, $$\mathcal{A} \subset \overline{SL_2(\mathbb{R}) \cdot N} \subset \overline{SL_2(\mathbb{R}) \cdot M} = \overline{Hr_{-\theta} \cdot M}.$$  Then  (\ref{itemIII}) implies that $\mathcal{A}$ is a periodic $H$-orbit.   
\end{proof}

\begin{lemma} \label{lem:compparabolic}
Condition (\ref{itemIV}) implies that for any surface $N \in \overline{GL_2(\mathbb{R})\cdot M}$, 
\begin{enumerate}
\item $N$ is parabolic in every periodic direction, and 
\item $N$ is completely periodic.  
\end{enumerate}
\end{lemma}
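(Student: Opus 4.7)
My plan is to prove parts (1) and (2) separately; in both cases I reduce by rotation to the horizontal direction and then extract information from $\overline{H\cdot N}$ using hypothesis (\ref{itemIV}).

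For (1): let $N\in\overline{GL_2(\mathbb{R})\cdot M}$ be periodic in direction $\theta$. Replacing $N$ by $r_{-\theta}\cdot N$ (which stays in the orbit closure since $r_{-\theta}\in SL_2(\mathbb{R})\subset GL_2(\mathbb{R})$) reduces to the case $\theta=0$. Proposition \ref{prop:minimalSets} then asserts that $\overline{H\cdot N}$ is itself $H$-minimal and is isomorphic to a $d$-dimensional torus, where $d$ is the $\mathbb{Q}$-rank of the horizontal cylinder moduli of $N$. Since $\overline{H\cdot N}\subset\overline{GL_2(\mathbb{R})\cdot M}$, hypothesis (\ref{itemIV}) forces this minimal set to be a periodic $H$-orbit, i.e.\ a circle, so $d=1$. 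Hence the horizontal moduli of $N$ are pairwise commensurable and $N$ is parabolic in direction $\theta$.

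For (2): suppose $N$ has a cylinder in direction $\theta$; rotating, I may assume $\theta=0$ and the goal becomes to show $N$ admits a horizontal cylinder decomposition. By Zorn's lemma $\overline{H\cdot N}$ contains an $H$-minimal subset, which by (\ref{itemIV}) is a periodic $H$-orbit, and hence by Theorem \ref{t:SmillieWeissMinimalCharacterization} (equivalently, since a periodic $H$-orbit exhibits a parabolic affine automorphism) consists of horizontally periodic surfaces. Fix such an $N_0\in\overline{H\cdot N}$ with horizontal cylinder decomposition $N_0=C_1^0\cup\cdots\cup C_r^0$, bounded by horizontal saddle connections $\sigma_1,\dots,\sigma_m$, and write $N_0=\lim_k h_{s_k}N$. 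The key observation is that $h_s$ preserves the $y$-component of every period integral, since $h_s$ fixes the $y$-coordinate of vectors in $\mathbb{R}^2$; consequently, for each $j$, the $y$-component of the holonomy of $\sigma_j$ on $N$ equals its $y$-component on every $h_{s_k}N$, and the latter converges to the $y$-component on $N_0$, which is $0$. Hence each $\sigma_j$ is a horizontal saddle connection of $N$ as well. Because $\overline{GL_2(\mathbb{R})\cdot M}$ lies in a single stratum, the combinatorial decomposition of $N_0$ into annular pieces (with no cone points in their interiors) cut out by the $\sigma_j$ transfers to $N$: each corresponding piece of $N$ is a topological annulus bounded by horizontal saddle connections with no interior singularities, hence a horizontal flat cylinder. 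Therefore $N$ is horizontally periodic.

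The main technical obstacle is the persistence step in (2): one must verify that the saddle connections $\sigma_j$ of $N_0$ correspond to bona fide saddle connections of $h_{s_k}N$ (and therefore of $N$, via $h_{-s_k}$) with the same endpoints and the same combinatorial gluing around each cone point, so that the cylinder decomposition of $N_0$ really does transfer to $N$. This is standard provided everything remains in a single stratum (guaranteed by the paper's convention for orbit closures and by Theorem \ref{t:affineInvariantSubmfld}), so that cone points are marked and no degenerations occur; once this is in place, the $y$-component invariance under $h_s$ is purely algebraic and the rest of (2) is immediate. Part (1), by contrast, is a direct application of Proposition \ref{prop:minimalSets} together with (\ref{itemIV}) and requires no further analysis.
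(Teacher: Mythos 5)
Your part (1) is essentially the paper's argument: reduce by rotation to the horizontal direction and combine Proposition \ref{prop:minimalSets} with hypothesis (\ref{itemIV}) to force the Smillie--Weiss torus to be one-dimensional, i.e.\ the moduli to be pairwise commensurable.

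Your part (2) has a genuine gap in the transfer of the saddle connections $\sigma_j$ from $N_0$ back to $N$. There are two distinct ways to identify a relative homology class of $N_0$ with one on $h_{s_k}N$: via the local period-coordinate chart near $N_0$ (which is what makes ``the $y$-component on $h_{s_k}N$ converges to the $y$-component on $N_0$'' meaningful), and via the canonical identification of the underlying marked surfaces of $h_{s_k}N$ and $N$ (which is what makes ``$h_s$ preserves $y$-components'' true). These two identifications need not agree, and in fact they differ by an element of the mapping class group that typically grows without bound as $k\to\infty$, since $h_{s_k}$ leaves every compact subset of $SL_2(\mathbb{R})$. Consequently the class on $N$ that ``corresponds to $\sigma_j$'' changes with $k$; its $y$-component tends to $0$ but need never vanish, and no single horizontal saddle connection of $N$ is produced. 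A quick sanity check makes the problem visible: your argument never actually uses the assumed horizontal cylinder of $N$, and it uses (\ref{itemIV}) only to get a horizontally periodic $N_0\in\overline{H\cdot N}$, which Theorem \ref{t:SmillieWeissMinimalCharacterization} already supplies unconditionally. If the transfer step were valid, the argument would show that \emph{every} translation surface is horizontally periodic, which is false.

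The paper proves (2) by a different route: it derives a contradiction with part (1) using a cylinder deformation. Assuming $N$ has both a cylinder and a minimal component in the horizontal direction, it fixes a horizontally periodic $N'\in\overline{H\cdot N}$, lets $\mathcal{C}$ be the horizontal cylinders of $N'$ that came from the cylinders of $N$, notes these are not $\mathcal{M}$-equivalent to the other horizontal cylinders of $N'$, and applies Wright's cylinder deformation lemma (Lemma \ref{lem:Wright}) to shear $\mathcal{C}$ alone. This produces a horizontally periodic surface in $\mathcal{M}$ whose cylinder moduli are not all commensurable, contradicting the parabolicity established in (1).
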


\begin{proof}
To see that $N$ is parabolic in every periodic direction, suppose $N$ is periodic in direction $\theta$ but the moduli of the cylinders (in direction $\theta$) are not rationally related.  Then $Hr_{-\theta} \cdot N$ is a $H$-minimal set that is not periodic, contradicting (\ref{itemIV}).  

Now, suppose there is a direction $\theta$ in which $N$ has at least one cylinder and at least one minimal component.  Without loss of generality, we will assume $\theta$ is the positive horizontal direction.  By Theorem \ref{t:SmillieWeissMinimalCharacterization}, fix a surface $N^{\prime} \in \overline{H \cdot N} \subset \overline{GL_2(\mathbb{R}) \cdot M}$ with a horizontal cylinder decomposition, and let $\mathcal{C}$ be the cylinders in $N^{\prime}$ which ``came from" the horizontal cylinders of $N$. Clearly the horizontal cylinders of $N^{\prime}$ which are not in $\mathcal{C}$ are not in the same $\mathcal{M}$-equivalence class as any cylinder of $\mathcal{C}$.  Thus, by Lemma \ref{lem:Wright}, $\eta_{\mathcal{C}} \in T_{N^{\prime}}(\mathcal{M})$.  Following the approach in \cite{CylinderDeformations}, for some small $\epsilon>0$, there exists a surface $N^{\prime \prime} \in \mathcal{M}$ corresponding to $[\omega]+ \epsilon i \eta_{\mathcal{C}}$, where $[\omega]$ is element of $H^1(S,\Sigma; \mathbb{C})$ corresponding to $N^{\prime}$, that is horizontally periodic and such that the moduli of the cylinders of $N^{ \prime \prime}$ that came from $\mathcal{C}$ are not rationally related to the moduli of the cylinders of $N^{\prime \prime}$ that did not come from $\mathcal{C}$.  Hence $N^{\prime \prime}$ is horizontally periodic but not parabolic; by the conclusion of the previous paragraph, this is a contradiction. 
\end{proof}

\begin{lemma} 
\label{l:uniqueequivclass}
Condition (\ref{itemIV}) implies that for any surface $N \in \overline{GL_2(\mathbb{R}) \cdot M} = \mathcal{M}$ and for any direction $\theta$ that is periodic for $N$, $N$ has a unique $\mathcal{M}$-equivalence class of cylinders in direction $\theta$. 
\end{lemma}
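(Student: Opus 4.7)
The plan is to argue by contradiction and produce, via a cylinder deformation, a surface in $\mathcal{M}$ that is horizontally periodic but not parabolic in the horizontal direction; this will contradict Lemma \ref{lem:compparabolic}(1). Assume for contradiction that $N \in \mathcal{M}$ is periodic in direction $\theta$ but that the cylinders of $N$ in direction $\theta$ split into at least two distinct $\mathcal{M}$-equivalence classes. After rotating so that $\theta$ is horizontal, pick one such class $\mathcal{C}$ and call the remaining horizontal cylinders $\mathcal{C}'$. By Wright's Lemma \ref{lem:Wright}, $\eta_{\mathcal{C}} \in T^N(\mathcal{M})$, and since affine invariant submanifolds are cut out by complex linear equations in period coordinates (Theorem \ref{t:affineInvariantSubmfld}), the subspace $T^N(\mathcal{M})$ is closed under multiplication by $i$; hence $i\eta_{\mathcal{C}} \in T^N(\mathcal{M})$ as well. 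For all sufficiently small $\epsilon > 0$, the class $[\omega] + \epsilon i \eta_{\mathcal{C}}$, where $[\omega]$ is the class corresponding to $N$, therefore represents a genuine surface $N'' \in \mathcal{M}$.

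Second, I would unpack the effect of this deformation on the horizontal structure. Because $\eta_{\mathcal{C}}$ lies in the twist space (it is the derivative at $t=0$ of the cylinder shear $h_t^{\mathcal{C}}$, which preserves horizontal directions), it vanishes on every horizontal saddle connection of $N$, so the periods of these saddle connections are unchanged by adding $\epsilon i \eta_{\mathcal{C}}$ and $N''$ has the same horizontal cylinder decomposition as $N$. On a cross-cylinder path of a cylinder $C \in \mathcal{C}$ of height $h_C$, the class $\eta_{\mathcal{C}}$ evaluates to $h_C$, so adding $\epsilon i \eta_{\mathcal{C}}$ scales the height of $C$ by $1+\epsilon$; on cross-cylinder paths of cylinders in $\mathcal{C}'$, $\eta_{\mathcal{C}}$ vanishes, so those heights are unchanged. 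Widths of all horizontal cylinders are read off their horizontal core curves, on which $\eta_{\mathcal{C}}$ also vanishes. The net effect is that every $\mathcal{C}$-modulus of $N''$ equals the corresponding modulus of $N$ multiplied by $1+\epsilon$, while every $\mathcal{C}'$-modulus is unchanged.

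Finally, Lemma \ref{lem:compparabolic}(1) tells us that $N$ is parabolic in the horizontal direction, so all of its horizontal moduli are rationally commensurable. For all but countably many $\epsilon$ in a small interval around $0$, the factor $1+\epsilon$ is irrational, so multiplying the $\mathcal{C}$-moduli by $1+\epsilon$ destroys their commensurability with the $\mathcal{C}'$-moduli; then $N''$ is horizontally periodic but not parabolic, contradicting Lemma \ref{lem:compparabolic}(1) applied to $N'' \in \mathcal{M}$. The main obstacle is the period-coordinate bookkeeping in the second paragraph, namely confirming that $\eta_{\mathcal{C}}$ acts as claimed on horizontal saddle connections, horizontal core curves, and cross-cylinder arcs of cylinders both in and outside $\mathcal{C}$; this is exactly the content encoded by $\eta_{\mathcal{C}}$ being the derivative of the cylinder shear and mirrors the calculation already carried out in the proof of Lemma \ref{lem:compparabolic}, so the same mechanism should suffice.
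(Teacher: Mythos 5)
Your proof is correct and takes essentially the same approach as the paper's: apply Wright's cylinder deformation lemma to stretch a single $\mathcal{M}$-equivalence class of horizontal cylinders vertically, thereby destroying the commensurability of moduli that Lemma~\ref{lem:compparabolic}(1) requires. The paper's version is a three-sentence compression of exactly this idea; you have simply spelled out the period-coordinate bookkeeping (that $i\eta_{\mathcal{C}}\in T^N(\mathcal{M})$ by complex-linearity, and that the resulting deformation scales only the heights of cylinders in $\mathcal{C}$) and the arithmetic (choosing $\epsilon$ with $1+\epsilon$ irrational), both of which the paper leaves implicit.
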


\begin{proof}
Suppose $N$ has at least two $\mathcal{M}$-equivalence classes of cylinders in a periodic direction $\theta$.  Without loss of generality, assume assume $\theta$ is the positive horizontal direction.  By Lemma \ref{lem:Wright}, we can stretch the cylinders in one of these equivalence classes vertically while keeping the rest of the surface unchanged to obtain a horizontally periodic surface $N^{\prime}\in  \mathcal{M}$ that has at least two horizontal cylinders whose moduli are not rationally related.  By Lemma \ref{lem:compparabolic}, this is a contradiction. \end{proof}

\begin{lemma}
\label{l:TwistPresEquiv}
Condition (\ref{itemIV}) implies that if $S$ is any horizontally periodic (not necessarily regular) surface in $\mathcal{M}$, then $\textrm{Twist}(S,\mathcal{M}) = \textrm{Pres}(S,\mathcal{M})$
\end{lemma}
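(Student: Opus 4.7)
The inclusion $\textrm{Twist}(S,\mathcal{M}) \subseteq \textrm{Pres}(S,\mathcal{M})$ is immediate: each horizontal core curve $\alpha_i$ is homologous in $H_1(S, \Sigma; \mathbb{Z})$ to the signed sum of the horizontal saddle connections on one of its boundary components, so any cocycle vanishing on every horizontal saddle connection also vanishes on each $\alpha_i$. The content lies in the reverse inclusion $\textrm{Pres}(S,\mathcal{M}) \subseteq \textrm{Twist}(S,\mathcal{M})$, which I plan to prove by contradiction, leveraging the rigidity supplied by Lemmas \ref{lem:compparabolic} and \ref{l:uniqueequivclass}.

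Suppose $v \in \textrm{Pres}(S,\mathcal{M})$ satisfies $v(\sigma_0) \neq 0$ for some horizontal saddle connection $\sigma_0$. Since $T^S(\mathcal{M})$ is a $\mathbb{C}$-linear subspace of $H^1(S,\Sigma;\mathbb{C})$ by Theorem \ref{t:affineInvariantSubmfld}, both $\operatorname{Re}(v)$ and $\operatorname{Im}(v)$ lie in $\textrm{Pres}(S,\mathcal{M})$, and at least one of them is not in $\textrm{Twist}(S,\mathcal{M})$; after replacing $v$ by that part, I may assume $v$ is purely real or purely imaginary. Lemma \ref{l:uniqueequivclass} guarantees that the family $\mathcal{C}$ of all horizontal cylinders of $S$ forms a single $\mathcal{M}$-equivalence class, so Lemma \ref{lem:Wright} gives $\eta_{\mathcal{C}} \in T^S(\mathcal{M})$, and hence also $i\eta_{\mathcal{C}} \in T^S(\mathcal{M})$.

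The plan is to deform $S$ along $v$ (with possible small corrections along $\eta_{\mathcal{C}}$ or $i\eta_{\mathcal{C}}$) to produce a surface in $\mathcal{M}$ that violates complete parabolicity. In the purely imaginary case $v = i\, v_{\operatorname{Im}}$, the one-parameter family $S_t = S + tv$ keeps the horizontal core curves $\alpha_i$ fixed (since $v(\alpha_i) = 0$), so $S_t$ is horizontally periodic for small $t$ with unchanged widths and with cylinder heights perturbed to $h_i + t\, v_{\operatorname{Im}}(\gamma_i)$ on cross-paths $\gamma_i$. Applying Lemma \ref{lem:compparabolic} to $S_t$ forces each modulus ratio $m_i(t)/m_j(t)$ to be rational (hence constant) in $t$, pinning down $v_{\operatorname{Im}}$ on cross-paths modulo $\eta_{\mathcal{C}}$. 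I then use that $v(\sigma_0) \neq 0$ causes horizontal saddle connections to tilt in $S_t$, changing the combinatorics of the horizontal cylinder decomposition at nearby surfaces; reapplying Lemma \ref{l:uniqueequivclass} at those surfaces produces additional linear constraints on $v$ that, together with $v \in \textrm{Pres}$, should force $v(\sigma) = 0$ on every horizontal saddle connection. The purely real case is handled by first perturbing along $i\eta_{\mathcal{C}}$ to introduce height variation and then running the imaginary-case analysis at the perturbed surface.

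The main obstacle is the detailed combinatorial bookkeeping in the imaginary case: when horizontal saddle connections tilt under the deformation, the horizontal cylinder decomposition of $S_t$ need not coincide combinatorially with that of $S$, so one must describe the new cylinder structure (whether cylinders split, persist, or merge) and re-identify the associated $\mathcal{M}$-equivalence classes before Lemma \ref{l:uniqueequivclass} can be applied. Managing this subtlety, while extracting from Lemma \ref{lem:compparabolic} the strong constraint $v(\sigma) = 0$ on every horizontal saddle connection rather than a weaker constraint only on cross-path evaluations, is the essential difficulty.
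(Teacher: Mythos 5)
The core idea you land on --- use the $\mathbb{C}$-linearity of $T^S(\mathcal{M})$ to tip a horizontal saddle connection $\sigma_0$ with $v(\sigma_0)\neq 0$ out of the horizontal --- is the right one and is what the paper does. But you then route the contradiction through Lemma \ref{lem:compparabolic} (rational modulus ratios) plus an unresolved combinatorial analysis, whereas the paper goes straight to Lemma \ref{l:uniqueequivclass}. Wright's cylinder-deformation machinery (\cite{CylinderDeformations}, proof of Theorem 1.5) already gives that if $\textrm{Pres}(S,\mathcal{M})\supsetneq\textrm{Twist}(S,\mathcal{M})$, then tipping $\sigma_0$ produces a nearby horizontally periodic surface $S'$ with strictly \emph{more} horizontal cylinders than $S$; the paper then simply observes that $S'$ has (at least) two $\mathcal{M}$-equivalence classes of horizontal cylinders --- the ones inherited from $S$, whose core curves were unchanged, and the newly created one, which can be collapsed independently by undoing the tip --- contradicting Lemma \ref{l:uniqueequivclass}. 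That one-step contradiction avoids the modulus-rationality analysis entirely and is the feature your proposal is missing.

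Two further points. First, the reduction to ``purely real'' and ``purely imaginary'' parts, followed by the $i\eta_{\mathcal{C}}$ detour in the real case, is unnecessary: if $v\in\textrm{Pres}\setminus\textrm{Twist}$, then $iv$ also vanishes on core curves, so $iv\in\textrm{Pres}(S,\mathcal{M})$, and $iv(\sigma_0)$ has a nonzero imaginary part, so $iv$ already tips $\sigma_0$. Second, and more seriously, your description of the imaginary-case deformation $S_t=S+tv$ as ``horizontally periodic with unchanged widths and cylinder heights $h_i+t\,v_{\operatorname{Im}}(\gamma_i)$'' is inconsistent with the very fact you then invoke ($v(\sigma_0)\neq 0$ tips $\sigma_0$): once $\sigma_0$ is no longer horizontal, the horizontal cylinder decomposition of $S_t$ does not coincide combinatorially with that of $S$, so the ``heights'' are not linear functions of $t$ and the application of Lemma \ref{lem:compparabolic} to read off constraints on $v_{\operatorname{Im}}$ along cross-paths is not well-founded. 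This is precisely the gap you flag at the end, and it is where the argument as written fails; switching to the paper's route through Lemma \ref{l:uniqueequivclass} avoids it.
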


\begin{remark} The definitions (from \cite{CylinderDeformations}) of Twist$(S,\mathcal{M})$ and Pres$(S,\mathcal{M})$ use $T^S\mathcal{M}$, which exists only if $S$ is a regular point (i.e. non-self-intersection point) of $\mathcal{M}$.  However, it is easy to extend these notions to the case when $S$ is a self-intersection point of $\mathcal{M}$; in this case we interpret $T^S(\mathcal{M})$ to be the ``union" of the ``tangent spaces" to $\mathcal{M}$ at $S$  and define Twist$(S,\mathcal{M})$ and Pres$(S,\mathcal{M})$ accordingly.  In \cite{CylinderDeformations}, Wright shows that for any regular point $S \in \mathcal{M}$ that is horizontally periodic, $\textrm{Twist}(S,\mathcal{M})$ is contained in $span_{\mathbb{R}}(\eta_{C^S_i})_{i=1}^n,$ where $C_1^S,\dots,C_n^S$ are the horizontal cylinders of $S$, and if $\textrm{Pres}(S,\mathcal{M})$ properly contains $\textrm{Twist}(S,\mathcal{M})$, then there is a surface $S^{\prime} \subset \mathcal{M}$ that has more horizontal cylinders than $S$ does.  Lemma \ref{l:TwistPresEquiv} checks that this result still holds if $S$ is not a regular point with our generalized notion of Twist$(S,\mathcal{M})$ and Pres$(S,\mathcal{M})$.
 \end{remark}

\begin{proof}
 The definition of $\mathcal{M}$-equivalence classes of cylinders in $\mathcal{S}$  is phrased in terms of all nearby surfaces to $S$ in $\mathcal{M}$ and thus still makes sense for nonsingular points $S$ of $\mathcal{M}$. 
 Wright's lemma (\cite{CylinderDeformations}) asserting that the deformation corresponding to $\eta_C$ (where $C$ is a $\mathcal{M}$-equivalence class of cylinders) remains in the tangent space at $S$ still holds when we interpret tangent space to mean the union of the tangent spaces at $S$.   Thus, Lemma 13 holds even when $S$ is a self-intersection point of $\mathcal{M}$.  
 
Now suppose condition (\ref{itemIV}) holds for some horizontally periodic surface $S$ but Twist$(S,\mathcal{M}) \not = \textrm{Pres}(S,\mathcal{M})$.  Then there is some horizontal saddle connection whose length can be changed without altering the absolute cohomology of $S$. By slightly tipping this horizontal saddle connection so that it is not horizontal (which we may do because of the fact (\cite{EMM}) that the tangent space is linear in \emph{complex-valued} period coordinates), we obtain a nearby surface in the orbit closure that has more horizontal cylinders.  This surface is in the orbit closure and  clearly has more than one $\mathcal{M}$-equivalence class (it has the cylinders from $S$ and the new cylinder); by Lemma \ref{l:uniqueequivclass}, this is a contradiction. 
\end{proof}

\begin{lemma}\label{l:dim2} 
Condition (\ref{itemIV}) implies that the self-intersection set of $\mathcal{M}$ is empty and  $$dim_{\mathbb{C}} (T^N(\mathcal{M})) = 2$$ for any point $N \in \mathcal{M}$.
 \end{lemma}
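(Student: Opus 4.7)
The plan is to control the two pieces of the short exact sequence
\[
0 \to \ker p \cap T^N(\mathcal{M}) \to T^N(\mathcal{M}) \xrightarrow{p} p(T^N(\mathcal{M})) \to 0
\]
separately: the absolute quotient is bounded using Wright's cylinder-rank theorem together with complete periodicity, while the purely-relative kernel is killed by the identity Twist $=$ Pres just established. A dimension count for affine invariant submanifolds then rules out self-intersection.

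First, I would show $\dim_{\mathbb{C}} p(T(\mathcal{M})) = 2$. By Lemma \ref{lem:compparabolic} every surface in $\mathcal{M}$ is completely periodic, so Theorem \ref{t:wright1.7} gives $\dim_{\mathbb{C}} p(T(\mathcal{M})) \leq 2$. Conversely, the $SL_2(\mathbb{R})$-action contributes three real-linearly independent tangent directions to $p(T(\mathcal{M}))$; since $p(T(\mathcal{M}))$ is a complex subspace of $H^1(S;\mathbb{C})$ its real dimension is even, hence at least $4$, so $\dim_{\mathbb{C}} p(T(\mathcal{M})) \geq 2$.

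Next, pick a horizontally periodic $N \in \mathcal{M}$ (such $N$ exist inside $\mathcal{M}$: any $H$-orbit closure in $\mathcal{M}$ contains an $H$-minimal subset, which by Theorem \ref{t:SmillieWeissMinimalCharacterization} and Proposition \ref{prop:minimalSets} is horizontally periodic). For any $\omega \in \ker p \cap T^N(\mathcal{M})$, $\omega$ vanishes on every absolute cycle, so in particular on the core curves of horizontal cylinders, placing $\omega$ in Pres$(N,\mathcal{M})$. By Lemma \ref{l:TwistPresEquiv} we have $\omega \in $ Twist$(N,\mathcal{M})$, so $\omega$ also vanishes on every horizontal saddle connection. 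In a horizontally periodic connected surface the graph whose vertices are cone points and whose edges are horizontal saddle connections is connected (the union of cylinder boundaries is a connected subset of $S$), so horizontal saddle connections span $H_1(S,\Sigma;\mathbb{Z})/H_1(S;\mathbb{Z})$; combined with vanishing on $H_1(S;\mathbb{Z})$ this forces $\omega = 0$. Thus $p|_{T^N(\mathcal{M})}$ is injective and $\dim_{\mathbb{C}} T^N(\mathcal{M}) = 2$, which being locally constant at regular points gives $\dim_{\mathbb{C}} \mathcal{M} = 2$.

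Finally, to rule out self-intersections I would invoke the general fact that every affine invariant submanifold has complex dimension at least $2$ (it contains a $3$-real-dimensional $SL_2(\mathbb{R})$-orbit and is a complex subspace, forcing real dimension $\geq 4$). The self-intersection set of $\mathcal{M}$ is $SL_2(\mathbb{R})$-invariant and closed, so the $SL_2$-orbit closure of any point in it is an affine invariant submanifold contained in $\mathcal{M}$ of complex dimension $\geq 2$, which must therefore equal $\mathcal{M}$ itself. This forces the self-intersection set to contain an $SL_2$-orbit dense in $\mathcal{M}$ and so, being closed, to equal $\mathcal{M}$, contradicting its defining property of having $\mu$-measure zero. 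The main obstacle is the homological claim in the second step (that horizontal saddle connections together with absolute cycles generate $H_1(S,\Sigma;\mathbb{Z})$ on a horizontally periodic surface) and properly handling the extension of the Twist$/$Pres definitions from the remark before Lemma \ref{l:TwistPresEquiv} should the chosen horizontally periodic $N$ itself be a self-intersection point.
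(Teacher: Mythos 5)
Your decomposition via the short exact sequence
\[
0 \to \ker p \cap T^N(\mathcal{M}) \to T^N(\mathcal{M}) \xrightarrow{\ p\ } p(T^N(\mathcal{M})) \to 0
\]
is a genuinely different organization of the proof than the paper's. The paper works directly in period coordinates: it builds a CW decomposition of $N$ into parallelograms (one per horizontal cylinder), takes as a spanning set the horizontal saddle connections $\gamma_1,\dots,\gamma_k$ together with one transversal edge $\gamma_{k+i}$ per cylinder, and shows each group is governed by a single complex parameter (the first via Lemma~\ref{l:uniqueequivclass} together with $\mathrm{Twist}=\mathrm{Pres}$, the second via a non-commensurable-moduli argument). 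Your version cleanly isolates the absolute part $p(T(\mathcal{M}))$, pinned to complex dimension exactly $2$ by Theorem~\ref{t:wright1.7} plus complete periodicity on one side and rank $\geq 1$ on the other, and tries to kill the rel part using $\mathrm{Twist}=\mathrm{Pres}$. That is an appealing reorganization, and the treatment of the absolute quotient and of the self-intersection set (via $SL_2(\mathbb{R})$-invariance of the singular locus, which must then be a full-measure affine invariant submanifold, contradicting the defining measure-zero condition) is if anything more explicit than the paper's terse closing sentence.

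The gap you yourself flagged, however, is real and your proposed fix does not work: it is \emph{not} true that on a connected horizontally periodic surface the graph on $\Sigma$ with edges the horizontal saddle connections is connected. The union of cylinder boundary circles (the ``spine'') can be disconnected even though the surface is connected, because a cylinder can join two distinct spine components without any horizontal saddle connection running between them; the surface is held together by the cylinders crossing between components, not by the spine itself. (A concrete low-complexity example: a flat torus with two marked points lying on different horizontal leaves has two horizontal cylinders and a two-component spine, each component a circle through one marked point. Higher-genus examples with genuine zeros exist as well.) Thus ``$\omega$ vanishes on $H_1(S;\mathbb{Z})$ and on every horizontal saddle connection'' only forces $\omega=0$ when the spine is connected; in general it leaves a space of dimension one less than the number of spine components. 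To close this, you would need to show that a nonzero such $\omega \in T^N(\mathcal{M})$ leads to a contradiction with~(\ref{itemIV}): the corresponding rel deformation translates one spine component relative to another, which changes the heights (hence moduli) of cylinders crossing between components while fixing circumferences and the moduli of cylinders internal to one component, generically producing non-commensurable moduli. This is essentially the paper's non-commensurability step, now applied to the rel direction; your approach is completable, but not without importing that argument, and the one-line topological claim you leaned on is false as stated.
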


\begin{proof}

Without loss of generality, assume $N$ has a cylinder decomposition in the horizontal direction.  Let $C_1,\dots,C_n$ be the horizontal cylinders of $N$.  We will represent $N$ as a collection of polygons whose edges are glued together.  Each cylinder $C_i$ can be represented by a parallelogram embedded in $\mathbb{R}^2$, whose ``top" and ``bottom" sides are horizontal, and whose ``left" and ``right" sides (which are parallel, but not necessarily vertical) are glued together via a translation of the form $(x,y) \mapsto (x+c_i,y)$, where $c_i$ is the length of the core curve of cylinder $C_i$.  

Let $\gamma_1,\dots,\gamma_k$ be a list of the horizontal saddle connections in $N$ that are in the horizontal sides of these parallelograms.  We then pick $n$ additional saddle connections  $\gamma_{k+1},\dots,\gamma_{k+n}$ that are the saddle connections representing the non-horizontal sides of the parallelograms.  Then the relative homology classes $[\gamma_1],\dots,[\gamma_{k+n}]$ constitute a spanning set for $H_1(N,\Sigma;\mathbb{C})$, since the dual cohomology classes in  $H^1(N,\Sigma;\mathbb{C})$ completely determines the geometry of $N$.  

Since $C_1,\dots,C_n$ constitute the unique $\mathcal{M}$-equivalence class of cylinders in the horizontal direction, there is a system of linear equations relating the cohomology classes of the core curves of $C_1,\dots,C_n$ that collectively implies that if we slightly deform $N$ while remaining in the orbit closure $\mathcal{M}$, we must keep these core curves all parallel (i.e. we may rotate the complex valued assigned to the core curves by cohomology about the origin in $\mathbb{C}$ by a uniform rotation) but we may scale all their lengths by a real number.  Therefore, since $\textrm{Twist}(N,\mathcal{M})=\textrm{Pres}(N,\mathcal{M})$ by Lemma \ref{l:TwistPresEquiv}, any deformation which remains in the orbit closure will scale and rotate the complex numbers assigned by cohomology to $[\gamma_1],\dots,[\gamma_k]$ uniformly (i.e. uniformly multiply them by a complex number).  Thus, a single complex parameter governs the cohomology values of $[\gamma_1],\dots,[\gamma_k]$.  

Now, for any given values for $[\gamma_1],\dots,[\gamma_k]$, suppose there are two or more (complex) degrees of freedom in assigning values to $[\gamma_{k+1}],\dots,[\gamma_{k+n}]$ (saddle connections comprising the non-vertical sides of the parallelograms).   Then, we could remain in the orbit closure $\mathcal{M}$ by, in particular, multiplying the complex number representing one of these saddle connections by any real number $\alpha$ while multiplying the complex number representing a different one of these saddle connections by any other real number $\beta$. Since we are assuming $[\gamma_1],\dots,[\gamma_k]$ are constant, to each of the $\gamma_{k+i}$ there is a cylinder whose modulus depends only on $\gamma_{k+1}$.  In particular, we could pick $\alpha$ and $\beta$ so that the moduli of the two corresponding cylinders are not commensurable.  The $H$-orbit of the resulting surface would not be periodic, contradicting (\ref{itemIV}).  Hence, there is a single complex parameter that governs the values of $[\gamma_{k+1}],\dots,[\gamma_{k+n}]$.  

We have shown that, if we are to remain in the orbit closure in a neighborhood of $N$, we have two complex degrees of freedom in prescribing the cohomology values of $[\gamma_1],\dots,[\gamma_{k+n}]$, which completely determine the geometry of the surface: we can multiply $[\gamma_1],\dots,[\gamma_k]$ uniformly by a complex number, and we can multiply $[\gamma_{k+1}],\dots,[\gamma_{k+n}]$ uniformly by a complex number.  Consequently, $N$ is a regular point, and $\textrm{dim}_{\mathbb{C}}(T^N(\mathcal{M})=2$.  
\end{proof}

\begin{lemma}
\label{l:4to1}
(\ref{itemIV}) implies (\ref{itemI}). 
\end{lemma}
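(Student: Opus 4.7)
The plan is to use Lemma \ref{l:dim2} to show that the $SL_2(\mathbb{R})$-orbit of $M$ is closed in $\mathcal{H}_1$, after which the ``closed $SL_2(\mathbb{R})$-orbit'' entry in Theorem \ref{t:LatticeCharacterizations} immediately yields that $M$ is a lattice surface.

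Set $\mathcal{M} := \overline{GL_2(\mathbb{R}) \cdot M}$ and $\mathcal{M}_1 := \mathcal{M} \cap \mathcal{H}_1 = \overline{SL_2(\mathbb{R}) \cdot M}$. By Lemma \ref{l:dim2}, $\mathcal{M}$ has empty self-intersection set and $\dim_{\mathbb{C}} T^N(\mathcal{M}) = 2$ at every $N \in \mathcal{M}$, so $\mathcal{M}$ is an embedded real $4$-dimensional submanifold of the ambient stratum $\mathcal{H}$ and $\mathcal{M}_1$ is an embedded real $3$-dimensional submanifold of $\mathcal{H}_1$. The Veech group of any translation surface is discrete, so the $SL_2(\mathbb{R})$-orbit of every $N \in \mathcal{M}_1$ is itself of real dimension $3$; by invariance of domain, each such orbit is open in $\mathcal{M}_1$.

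In particular, $SL_2(\mathbb{R}) \cdot M$ is open in $\mathcal{M}_1$, and its complement $\mathcal{M}_1 \setminus (SL_2(\mathbb{R}) \cdot M)$ is the union of the remaining $SL_2(\mathbb{R})$-orbits in $\mathcal{M}_1$, hence also open. Since $\mathcal{M}_1$ is the closure of the path-connected set $SL_2(\mathbb{R}) \cdot M$, it is connected and therefore cannot be partitioned into two disjoint nonempty open sets. The complement must be empty, so $SL_2(\mathbb{R}) \cdot M = \mathcal{M}_1$ is closed, and $M$ is a lattice surface by Theorem \ref{t:LatticeCharacterizations}.

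I do not anticipate a serious obstacle, since all the substantive work has already been carried out in Lemmas \ref{lem:compparabolic}--\ref{l:dim2}; what remains is a clean dimension-plus-connectedness argument. The only mildly subtle input is the standard fact that Veech groups are discrete, which is used to conclude every $SL_2(\mathbb{R})$-orbit in $\mathcal{M}_1$ has dimension $3$. If one prefers to avoid citing this, one can instead note that condition (\ref{itemIV}) descends to $\overline{GL_2(\mathbb{R}) \cdot N}$ for any $N \in \mathcal{M}_1$, so Lemma \ref{l:dim2} forces $\overline{SL_2(\mathbb{R}) \cdot N}$ to be a $3$-manifold inside the $3$-manifold $\mathcal{M}_1$, hence clopen, hence all of $\mathcal{M}_1$ by connectedness; combined with the openness of $SL_2(\mathbb{R}) \cdot M$, this again yields closedness of the orbit.
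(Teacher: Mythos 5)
Your argument is correct and follows essentially the same route as the paper: both reduce via Theorem \ref{t:LatticeCharacterizations} to showing that $GL_2(\mathbb{R}) \cdot M$ is closed, and both deduce this from Lemma \ref{l:dim2}. You simply spell out the dimension-count, invariance-of-domain, and connectedness step that the paper's own (rather terse) proof leaves implicit.
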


\begin{proof}
By Theorem \ref{t:LatticeCharacterizations}, it suffices to show that (\ref{itemIV}) implies $GL_2(\mathbb{R}) \cdot M$ is closed.  Suppose there exists $X \in \mathcal{M} \setminus GL_2(\mathbb{R})\cdot M$.   Then $X$ is a regular point of $\mathcal{M}$ and $dim_{\mathbb{C}} T^X\mathcal{M} > 2$.  By Lemma \ref{l:dim2}, this is impossible.   
 
\end{proof}


{

\nocite{*}
\bibliographystyle{amsalpha}
\bibliography{LatticeSurfaceEquivBiblio}

\end{document}